\newtheorem{example}{Example}
\newtheorem{definition}{Definition}
\newtheorem{corollary}{Corollary}
\newtheorem{proposition}{Proposition}
\newtheorem{remark}{Remark}
\newtheorem{theorem}{Theorem}
\newtheorem{lemma}{Lemma}
\journal{Journal of Differential Equations}
\begin{document}

\begin{frontmatter}



\title{Strong chain control sets and affine control systems} 

\author{Fritz Colonius\corref{cor1}\fnref{label1}}
\ead{fritz.colonius@uni-a.de}
\cortext[cor1]{Corresponding author}
 \affiliation[label1]{organization={Institut 
 fur Mathematik},
             addressline={Universitatsstra\ss e 12},
             city={Augsburg},
             postcode={86159},
             country={Germany}}

  \author{Alexandre J. Santana\fnref{label2,fn1}}
 \fntext[label2]{Partially supported by CNPq grant n. 309409/2023-3}
 \affiliation[label2]{organization={Department of Mathematics},
             addressline={Av. Colombo, 5790},
              city={Maringa},
              postcode={87020-900},
             country={Brazil}}

\begin{abstract}
 For control-affine systems on non-compact manifolds, the
notion of strong chain control sets is introduced and related to the strong chain
transitivity of the associated control flows. Affine control systems on
$\mathbb{R}^{n}$ are embedded into bilinear control systems in an extended
state space and it is shown that they are topologically conjugate to the
induced system on the northern hemisphere of the Poincar\'{e} sphere. This
preserves strong chain control sets. Further chain controllability properties
on spheres are analyzed.
\end{abstract}



\begin{keyword}
affine control system \sep strong chain control set \sep strong
chain transitivity \sep \ Poincar\'{e} sphere


93B05 \sep 37B20

\end{keyword}

\end{frontmatter}



\section{Introduction}

This paper is mainly motivated by the problem of describing generalized
controllability properties for affine control systems on $\mathbb{R}^{n}$ of
the form%
\begin{equation}
\dot{x}(t)=A_{0}x(t)+a_{0}+\sum_{i=1}^{m}u_{i}(t)[A_{i}x(t)+a_{i}%
],\,u\in\mathcal{U}, \label{affine1}%
\end{equation}
where $A_{i}\in\mathbb{R}^{n\times n},a_{i}\in\mathbb{R}^{n},i\in
\{0,1,\ldots,m\}$. The control range $\Omega\subset\mathbb{R}^{m}$ is a convex
and compact neighborhood of $0\in\mathbb{R}^{m}$, and the controls $u$ are in%
\begin{equation}
\mathcal{U}:=\left\{  u\in L^{\infty}(\mathbb{R},\mathbb{R}^{m})\left\vert
u(t)\in\Omega\text{ for almost all }t\in\mathbb{R}\right.  \right\}  .
\label{U}%
\end{equation}
We denote the solution for initial condition $x(0)=x_{0}\in\mathbb{R}^{n}$ and
control $u\in\mathcal{U}$ by $\psi(t,x_{0},u),t\in\mathbb{R}$. The homogeneous
part of this system is the bilinear control system%
\begin{equation}
\dot{x}(t)=[A_{0}+\sum_{i=1}^{m}u_{i}(t)A_{i}]x(t),\,u\in\mathcal{U}.
\label{bilinear0}%
\end{equation}
Controllability questions for bilinear and affine control systems are a
classical topic of control theory. Early contributions are due to Rink and
Mohler \cite{RinkM68} who took the set of equilibria as a starting point for
establishing results on complete controllability. A theorem due to Jurdjevic
and Sallet \cite[Theorem 2]{JurS84} (cf. also Do Rocio, Santana, and Verdi
\cite{DSC09}) shows that affine system (\ref{affine1}) is controllable on
$\mathbb{R}^{n}$ if it has no fixed points and its homogeneous part
(\ref{bilinear0}) is controllable on $\mathbb{R}^{n}\setminus\{0\}$. Cannarsa
and Sigalotti \cite{CanS21} prove that approximately controllable bilinear
systems are controllable. As Jurdjevic \cite[p. 182]{Jurd97} emphasizes, the
controllability properties of affine systems are substantially richer than for
linear systems and may require \textquotedblleft entirely different
geometrical considerations\textquotedblright, based on Lie-algebraic methods,
which have yielded important insights. There is substantial literature on
controllability properties of bilinear and affine control systems. Here we
only refer to the monographs Mohler \cite{Mohler}, Elliott \cite{Elliott}, and
Jurdjevic \cite{Jurd97}.

In the present paper, we study chain controllability properties, which
constitute a weaker version of approximate controllability in infinite time.
Here small jumps in the trajectories are allowed. The approach we propose is
to extend the affine system to a bilinear control system on $\mathbb{R}^{n+1}%
$\ (cf. Jurdjevic \cite{Jurd84} for this idea in the context of the
Lie-algebraic approach). This system can be projected to the unit sphere
$\mathbb{S}^{n}$ or to projective space $\mathbb{P}^{n}$. We call
$\mathbb{S}^{n}$ the Poincar\'{e} sphere since a similar construction in the
theory of ordinary differential equation goes back to Poincar\'{e} \cite{Poin}
(cf. Perko \cite{Perko}). The induced system on the open northern hemisphere
$\mathbb{S}^{n,+}$ is conjugate to the system on $\mathbb{R}^{n}$, hence one
can study the chain controllability properties on the sphere. This continues
our previous work, where we consider control sets for affine system (Colonius,
Santana, Setti \cite{ColSS23}), chain control sets (i.e., maximal regions of
chain controllability) for linear control systems (Colonius, Santana, and
Viscovini \cite{ColSV24}) and for affine control systems (Colonius and Santana
\cite{ColS24a}), where we show that the chain control sets in $\mathbb{R}^{n}$
are mapped to subsets of the central chain control set of the induced system
on projective space $\mathbb{P}^{n}$.

The proposed approach for affine control systems is confronted with several
obstacles, which we can only partially surmount.

In general, chain controllability is only preserved under uniformly continuous
conjugation maps, which does not hold for the conjugation between the
non-compact spaces $\mathbb{R}^{n}$ and $\mathbb{S}^{n,+}$. We introduce a
modified version of chain controllability, which we term strong chain
controllability. Instead of bounding the jump lengths of controlled chains by
constants $\varepsilon>0$ we consider continuous jump functions defined on the
state space with values in $(0,\infty)$. This is based on an approach by
Hurley to dynamical systems on non-compact state spaces; cf. \cite{Hur92,
Hur95} and also Choi, Chu, and Park \cite{ChoiCP02}. Hurley's approach is
motivated by the desire to develop a theory, that only depends on the
topology and not on the metric. We apply it to the associated control flows.
Note that a different strengthening of chain transitivity (also called strong
chain transitivity) is due to Easton \cite{Easton}, who proposes to sum up the
length of the jumps along an $\varepsilon$-chain. This has e.g. been explored
by Ahmadi \cite{Ahma24}, where also further references are given. Patr\~{a}o
and San Martin \cite{PatSM07} treat chain transitivity in topological spaces
based on families of coverings by constructing associated shadowing semigroups.

Strong chain control sets are contained in chain control sets and coincide
with them when the state space is compact. For non-compact state spaces, the
use of jump functions instead of constant jump bounds entails that close to
the boundary of the state space or close to \textquotedblleft
infinity\textquotedblright\ very small jump lengths are required. Definition
\ref{Definition_strong} introduces the notion of strong chain control sets,
which allows us to show that the strong chain control sets of affine system
(\ref{affine1}) on $\mathbb{R}^{n}$ are mapped onto the strong chain control
sets of the induced system on the open northern hemisphere $\mathbb{S}^{n,+}$
of the Poincar\'{e} sphere (Corollary \ref{Corollary_relation}). The same is
true for the southern hemisphere $\mathbb{S}^{n,-}$. Hence the problem of
determining the strong chain control sets of affine system (\ref{affine1}) is
transformed into the problem of describing the strong chain control sets on
the hemispheres. For the latter problem, we have only very partial results. We
can describe the chain control sets for the systems on the sphere induced by
general bilinear control systems. Furthermore, also for the system restricted
to the closed northern hemisphere $\overline{\mathbb{S}^{n,+}}$ some results
are derived.

Section \ref{Section2} cites a fundamental lemma due to Hurley \cite{Hur95}
and uses it to describe the behavior of strongly chain transitive sets of
flows under topological conjugation. For general control-affine systems, the
relation of chain control sets to maximal chain transitive sets of the
associated control flows is recalled. Affine systems of the form
(\ref{affine1}) are embedded into bilinear control systems on an extended
state space. The associated control flow possesses a Selgrade decomposition,
which is closely related to the Selgrade decomposition of the homogeneous part
(\ref{bilinear0}). This yields that there is a unique central chain control
set $_{\mathbb{P}}E_{c}$ on the projective Poincar\'{e} sphere that is not
contained in the projective equator. These results are taken from Colonius and
Santana \cite{ColS24a}. Section \ref{Section3} introduces the notion of strong
chain controllability and strong chain control sets for control-affine systems
on non-compact manifolds $M$. Theorem \ref{Theorem_equivalence} shows that the
lifts to $\mathcal{U}\times M$ of the strong chain control sets are the
maximal invariant strongly chain transitive sets of the control flow. Example
\ref{Example_linear} presents a linear control system, where the control set
around the origin is inside the strong chain control set $E^{\ast}$ and
$E^{\ast}$ is a proper subset of the chain control set $E$. Section
\ref{Section4} establishes that every affine control system on $\mathbb{R}%
^{n}$ is conjugated to the induced system on the northern hemisphere
$\mathbb{S}^{n,+}$ of the Poincar\'{e} sphere and that the strong chain
control sets are preserved under this conjugacy. Analogous results hold for
the conjugation to the induced system on a subset of $\mathbb{P}^{n}$; cf.
Theorem \ref{Theorem_projective}. The ensuing sections contribute to the
analysis of chain controllability on spheres. For general bilinear control
systems, Section \ref{Section5} describes the chain control sets for the
induced systems on the sphere. Theorem \ref{Theorem_twoS} proves that for any
chain control set in projective space, the preimage on the sphere is either a
single chain control set or the union of two chain control sets. This applies,
in particular, to the central chain control set $_{\mathbb{P}}E_{c}%
\subset\mathbb{P}^{n}$ of an affine control system. Section \ref{Section6}
analyzes the chain control sets on the closed hemispheres of the Poincar\'{e}
sphere. Here additional assumptions are imposed on the chain control sets
$_{\mathbb{S}}E^{\hom}$ of the system on $\mathbb{S}^{n-1}$ induced by the
homogeneous part (\ref{bilinear0}). They determine the chain control sets of
the system restricted to the equator $\mathbb{S}^{n,0}$ of the Poincar\'{e}
sphere $\mathbb{S}^{n}$.

\textbf{Notation.} The origin in $\mathbb{R}^{n}$ is denoted by $0_{n}$ and
$0_{1}$ is abbreviated by $0$. The projections from $\mathbb{R}^{n}$ to the
sphere $\mathbb{S}^{n-1}$ and projective space $\mathbb{P}^{n-1}$ are denoted
by $\mathbb{S}x$ and $\mathbb{P}x$ for $0_{n}\not =x\in\mathbb{R}^{n}$.

\section{Preliminaries\label{Section2}}

In the first subsection, we recall properties of strong chain recurrence for
flows on non-compact metric spaces. The second subsection collects notions and
results for control systems and the associated control flows. In particular,
some properties of affine control systems are reviewed.

\subsection{Strong chain recurrence for flows\label{Subsection2.1}}

We formulate a lemma due to Hurley \cite{Hur95} characterizing strong chain
recurrence for continuous flows $\phi$ on non-compact metric spaces $X$ with
metric $d$, hence $\phi:\mathbb{R}\times X\rightarrow X$ is continuous with
$\phi(0,x)=x$ and $\phi(t+s,x)=\phi(t,\phi(s,x))$ for $t,s\in\mathbb{R}$ and
$x\in X$. Furthermore, an example illustrates the difference between chain
recurrence and strong chain recurrence.

Let $\mathcal{P}(X):=\{\varepsilon:X\rightarrow(0,\infty),~$continuous$\}$.
Given a function $\varepsilon\in\mathcal{P}(X)$ and $T>0$ sequences
$x_{0}=x,x_{1},\ldots,x_{k-1},x_{k}=y$ in $X$ and $t_{0},\ldots,t_{k-1}\geq T$
form an $(\varepsilon,T)$-chain from $x$ to $y$ if $d(\phi(t_{j}%
,x_{j}),x_{j+1})<\varepsilon(\phi(t_{j},x_{j}))$ for\thinspace$\,j=0,\ldots
,k-1$. A point $x\in X$ is called strongly chain recurrent if for all $T>0$
and $\varepsilon\in\mathcal{P}(X)$ there is an $(\varepsilon,T)$-chain from
$x$ to $x$. A set $Y\subset X$ is called strongly chain transitive, if for all
$x,y\in Y$ and all $T>0$ and $\varepsilon\in\mathcal{P}(X)$ there is an
$(\varepsilon,T)$-chain from $x$ to $y$. The set of all strongly chain
recurrent points is denoted by $\mathcal{CR}^{\ast}(X)$. If $X$ is compact,
the set $\mathcal{CR}^{\ast}(X)$ coincides with the chain recurrent set
$\mathcal{CR}(X)$, i.e., the set of all points $x$ such that for all constants
$\varepsilon>0$ and all $T>0$ there is an $(\varepsilon,T)$-chain from $x$ to
$x$ (this can be seen similarly as Proposition \ref{Proposition_compact}).

In the following the symbol \textquotedblleft$\varepsilon$\textquotedblright%
\ often denotes a function. It will be clear from the context when a constant
$\varepsilon>0$ is meant. We will need the following fundamental lemma due to
Hurley \cite[Lemma 2]{Hur95}; cf. also Choi, Chu, and Park \cite[Lemma
2]{ChoiCP02}.

\begin{lemma}
\label{Lemma2_Hurley}Suppose that $(X,d)$ and $(Y,\rho)$ are metric spaces and
that $g:X\rightarrow Y$ and $\varepsilon:Y\rightarrow(0,\infty)$ are
continuous. Then there is a continuous map $\delta:X\rightarrow(0,\infty)$
such that $d(x,y)<\delta(x)$ implies $\rho(g(x),g(y))<\varepsilon(g(x))$.
\end{lemma}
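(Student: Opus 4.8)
The plan is to build $\delta$ as a $1$-Lipschitz lower envelope of a pointwise \emph{admissible radius} function, so that continuity comes for free and only positivity must be verified. First I would record, for each $x\in X$, the largest capped radius that does the job with a built-in safety factor:
\[
a(x):=\sup\bigl\{t\in(0,1]:\rho(g(x),g(y))\le\tfrac12\varepsilon(g(x))\text{ for all }y\text{ with }d(x,y)\le t\bigr\}.
\]
Continuity of $g$ at $x$ together with $\varepsilon(g(x))>0$ guarantees $a(x)>0$, and since the defining condition is monotone in $t$, it continues to hold for every $t<a(x)$. Consequently, any continuous $\delta$ with $0<\delta(x)\le a(x)$ already forces $\rho(g(x),g(y))\le\tfrac12\varepsilon(g(x))<\varepsilon(g(x))$ whenever $d(x,y)<\delta(x)$, which is exactly the assertion.

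Next I would set
\[
\delta(x):=\inf_{z\in X}\bigl[d(x,z)+a(z)\bigr].
\]
This infimal convolution of $a$ with the metric is automatically $1$-Lipschitz, hence continuous, and taking $z=x$ yields $\delta(x)\le a(x)$, as required. The only remaining point, and the crux of the argument, is to verify $\delta(x)>0$ for every $x$; this can fail in general for infimal convolutions and is precisely where the non-compactness of $X$ could bite.

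The main obstacle is therefore a \emph{local} lower bound for $a$: I must rule out sequences $z_n\to x$ with $a(z_n)\to0$, which is the content I would establish by a uniform estimate near each point. Fixing $x_0$ and using joint continuity, I would choose $s\in(0,1]$ so small that $d(x_0,y)<2s$ implies both $\rho(g(x_0),g(y))<\tfrac18\varepsilon(g(x_0))$ and $\tfrac12\varepsilon(g(x_0))<\varepsilon(g(y))$. Then for any $z$ with $d(x_0,z)<s$ and any $y$ with $d(z,y)\le s$, the triangle inequality gives $\rho(g(z),g(y))\le\rho(g(z),g(x_0))+\rho(g(x_0),g(y))<\tfrac14\varepsilon(g(x_0))<\tfrac12\varepsilon(g(z))$, so $a(z)\ge s$ throughout $B(x_0,s)$. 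Thus $a$ is bounded below by a positive constant near every point, and a sequence $z_n\to x$ with $a(z_n)\to0$ cannot exist; combined with $d(x,z_n)\to0$ this shows $\inf_z[d(x,z)+a(z)]>0$. This produces the continuous positive $\delta$ with the desired implication, completing the proof.
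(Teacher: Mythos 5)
Your proof is correct. Note that the paper itself does not prove this lemma: it is quoted verbatim from Hurley \cite{Hur95} (Lemma 2; cf.\ also Choi, Chu, and Park \cite{ChoiCP02}), so there is no in-paper argument to compare against; your write-up supplies a self-contained proof. The structure is sound: the pointwise admissible radius $a(x)$ is well defined and positive by continuity of $g$ and positivity of $\varepsilon\circ g$; the monotonicity in $t$ justifies passing from the supremum to any smaller radius, so any continuous minorant of $a$ works; the infimal convolution $\delta(x)=\inf_{z}\bigl[d(x,z)+a(z)\bigr]$ is $1$-Lipschitz (hence continuous) and satisfies $\delta\le a$; and you correctly identify that the only genuine issue is strict positivity of $\delta$, which you settle by the local uniform lower bound $a\ge s$ on $B(x_0,s)$ obtained from the two continuity estimates at $x_0$ (the factor $\tfrac18$ against the safety margin $\tfrac12$ is exactly what makes the triangle-inequality step close). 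The case analysis $d(x_0,z)\ge s$ versus $d(x_0,z)<s$ then gives $\delta(x_0)\ge s>0$, as your sequence formulation also shows. The $1$-Lipschitz regularization via infimal convolution is a clean device here, since it converts a merely locally-bounded-below function into a continuous one without invoking partitions of unity or paracompactness; the cap $t\le1$ keeps $a$ bounded so the infimum is well behaved. No gaps.
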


This lemma has the following consequence for strong chain transitivity.

\begin{theorem}
\label{Theorem_components}Let $\phi$ and $\psi$ be continuous flows on metric
spaces $(X,d)$ and $(Y,\rho)$, respectively. Suppose that $h:X\rightarrow Y$
is a homeomorphism conjugating $\phi$ and $\psi$, i.e.,
\[
h(\phi(t,x))=\psi(t,h(x))\text{ for all }x\in X\text{ and }t\in\mathbb{R}%
\text{.}%
\]
Then $\mathcal{M}\subset X$ is a maximal invariant strongly chain transitive
set for $\phi$ if and only if $h(\mathcal{M})$ is a maximal invariant strongly
chain transitive set for $\psi$.
\end{theorem}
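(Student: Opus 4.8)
The plan is to show that the homeomorphism $h$ induces an inclusion-preserving bijection between the invariant strongly chain transitive sets of $\phi$ and those of $\psi$; the stated equivalence for \emph{maximal} such sets then drops out formally. Since $h^{-1}$ conjugates $\psi$ and $\phi$ in exactly the same manner, it suffices to verify that $h$ sends every invariant strongly chain transitive set of $\phi$ to an invariant strongly chain transitive set of $\psi$, the reverse implication following by applying the same argument to $h^{-1}$.

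Invariance transfers immediately from the conjugacy relation: if $\phi(t,\mathcal{M})=\mathcal{M}$ for all $t$, then $\psi(t,h(\mathcal{M}))=h(\phi(t,\mathcal{M}))=h(\mathcal{M})$, so $h(\mathcal{M})$ is invariant for $\psi$.

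The core of the argument is the transfer of strong chain transitivity, and here Lemma \ref{Lemma2_Hurley} is the decisive tool. Fixing $x'=h(x)$ and $y'=h(y)$ in $h(\mathcal{M})$ together with $T>0$ and $\varepsilon\in\mathcal{P}(Y)$, I must produce an $(\varepsilon,T)$-chain for $\psi$ from $x'$ to $y'$. I would apply the lemma to $g=h\colon X\to Y$ and to $\varepsilon$, obtaining a continuous $\delta\colon X\to(0,\infty)$, i.e.\ $\delta\in\mathcal{P}(X)$, with the property that $d(p,q)<\delta(p)$ forces $\rho(h(p),h(q))<\varepsilon(h(p))$. Strong chain transitivity of $\mathcal{M}$ for $\phi$ then yields a $(\delta,T)$-chain $x=x_0,x_1,\ldots,x_k=y$ with times $t_0,\ldots,t_{k-1}\geq T$ satisfying $d(\phi(t_j,x_j),x_{j+1})<\delta(\phi(t_j,x_j))$. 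Putting $x'_j=h(x_j)$ and taking $p=\phi(t_j,x_j)$, $q=x_{j+1}$ in the implication above gives
\[
\rho(\psi(t_j,x'_j),x'_{j+1})=\rho\bigl(h(\phi(t_j,x_j)),h(x_{j+1})\bigr)<\varepsilon\bigl(h(\phi(t_j,x_j))\bigr)=\varepsilon(\psi(t_j,x'_j)),
\]
which is exactly the defining inequality for an $(\varepsilon,T)$-chain from $x'$ to $y'$. The point to watch, and the step that genuinely needs the function-valued formulation, is that the jump bound is evaluated at the flow endpoint $\psi(t_j,x'_j)=h(\phi(t_j,x_j))$, while Hurley's lemma delivers a bound $\delta$ depending continuously on precisely that source point $\phi(t_j,x_j)$; it is this matching of evaluation points that makes the chain transfer go through, whereas a constant $\varepsilon$ would not suffice on the non-compact spaces at hand.

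Maximality follows with no further analysis. By the preceding two steps, both $h$ and $h^{-1}$ carry invariant strongly chain transitive sets to sets of the same type, and each respects inclusion; hence $h$ restricts to an inclusion-preserving bijection between the family of invariant strongly chain transitive sets of $\phi$ and that of $\psi$. Consequently $\mathcal{M}$ is maximal in its family if and only if $h(\mathcal{M})$ is maximal in the corresponding family, which is the asserted equivalence.
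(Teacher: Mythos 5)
Your proposal is correct and follows essentially the same route as the paper: apply Hurley's lemma (Lemma \ref{Lemma2_Hurley}) with $g=h$ to convert the jump function $\varepsilon\in\mathcal{P}(Y)$ into $\delta\in\mathcal{P}(X)$, push a $(\delta,T)$-chain forward through the conjugacy so that the evaluation points of the jump bounds match, and obtain the converse and maximality by the symmetric argument with $h^{-1}$. The only difference is presentational — you spell out the invariance transfer and the inclusion-preserving bijection explicitly, which the paper leaves implicit.
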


\begin{proof}
Let $\varepsilon\in\mathcal{P}(Y)$ and choose $\delta\in\mathcal{P}(X)$
according to Hurley's lemma, Lemma \ref{Lemma2_Hurley}. Consider
$y=h(x),y^{\prime}=h(x^{\prime})\in h(\mathcal{M})$ and let $T>0$. Since
$\mathcal{M}$ is strongly chain transitive there is a $(\delta,T)$-chain in
$X$ given by $x_{0}=x,x_{1},\ldots,x_{k}=x^{\prime}$ in $X$ and $T_{0}%
,\ldots,T_{k-1}\geq T$ with%
\[
d(\phi(T_{j},x_{j}),x_{j+1})<\delta(\phi(T_{j},x_{j}))\text{ for }%
\,j=0,\ldots,k-1.
\]
Then the points $y_{j}:=h(x_{j}),\,j=0,\ldots,k-1$, satisfy%
\begin{align*}
\rho(\psi(T_{j},y_{j}),y_{j+1})  &  =\rho(\psi(T_{j},h(x_{j})),h(x_{j+1}%
))=\rho(h(\phi(T_{j},x_{j})),h(x_{j+1}))\\
&  <\varepsilon(h(\phi(T_{j},x_{j}))=\varepsilon(\psi(T_{j},h(x_{j}%
)))=\varepsilon(\psi(T_{j},y_{j})).
\end{align*}
It follows that an $(\varepsilon,T)$-chain in $Y$ is given by $y_{0}%
=y=h(x),y_{1}=h(x_{1}),\ldots,\allowbreak y_{k}=y^{\prime}=h(x^{\prime})$ and
$T_{0},\ldots,T_{k-1}\geq T$. Since $\mathcal{M}$ is an invariant strongly
chain transitive set for $\phi$ it follows that $h(\mathcal{M})$ is an
invariant strongly chain transitive for $\psi$. The converse follows by
considering $h^{-1}$. Thus also the maximality property follows.
\end{proof}

In the following example, the state space is chain transitive but not strongly
chain transitive.

\begin{example}
\label{Example_strong}Consider the following two-dimensional autonomous
differential equation%
\[
\left(
\begin{array}
[c]{c}%
\dot{x}(t)\\
\dot{y}(t)
\end{array}
\right)  =\left(
\begin{array}
[c]{cc}%
0 & 1\\
0 & 0
\end{array}
\right)  \left(
\begin{array}
[c]{c}%
x(t)\\
y(t)
\end{array}
\right)  .
\]
In the following, we show that the strong chain recurrent set equals the
$x$-axis $\mathbb{R}\times\{0\}$.

It is clear that the $x$-axis is contained in the strong chain recurrent set
since it consists of equilibria. The components of the solutions are $\phi
_{1}(t,x,y)=x+ty,\phi_{2}(t,x,y)=y$ for $t\in\mathbb{R}$. Hence in the open
upper half-plane, the trajectories move to the right on parallels to the
$x$-axis. The state space $\mathbb{R}^{2}$ is easily seen to be chain
transitive: For example, for constant $\varepsilon>0$ and $T>0$ any
$(\varepsilon,T)$-chain from an initial point $(0,y_{0}),y_{0}>0$, on the
$y$-axis to the $x$-axis follows parallels to the $x$-axis till time $T$,
jumps to a lower parallel to the $x$-axis, etc.

\textbf{Claim:} There are $T>0$ and $\varepsilon\in\mathcal{P}(\mathbb{R}%
^{2})$ such that there is no $(\varepsilon,T)$-chain from $(x_{0}%
,y_{0})=(0,2)$ to the line $y=1$.

Let $T>0,\varepsilon\in\mathcal{P}(\mathbb{R}^{2})$ with $\varepsilon
(x,y)=\varepsilon^{\prime}(x)+\delta(x,y),$ where $\varepsilon^{\prime
}:\mathbb{R}\rightarrow(0,\infty)$ and $\delta:\mathbb{R}^{2}\rightarrow
(0,\infty)$ are continuous with $\delta(0,y)>0$ for all $y\in\mathbb{R}$ and
$\delta(x,y)=0$ for $\left\vert x\right\vert \geq1,y\in\mathbb{R}$.
Furthermore, we assume that $\varepsilon^{\prime}(x)$ strongly decreases to
$0$ for $x\rightarrow\infty$.

Let the jump points be $(x_{i},y_{i})$ with $y_{i}\in(1,2]$ and $x_{i+1}%
=\phi_{1}(T_{i},x_{i},y_{i})$. It suffices to show that $Y:=\lim
_{i\rightarrow\infty}y_{i}>1$, hence the line $y=1$ is not reached.

Note that $\varphi_{1}(T_{0},0,2)=2T_{0}>1$ for $T>\frac{1}{2}$, and%
\begin{align*}
\varepsilon(\phi(T_{0},0,2))  &  =\varepsilon(\phi_{1}(T_{0},0,2),\phi
_{2}(T_{0},0,2))=\varepsilon^{\prime}(\phi_{1}(T_{0},0,2))+\delta(\phi
_{1}(T_{0},0,2),2)\\
&  =\varepsilon^{\prime}(2T_{0})\leq\varepsilon^{\prime}(2T).
\end{align*}
Hence in the following only the function $\varepsilon^{\prime}$ will be
estimated. The first jump to $(x_{1},y_{1})$ occurs at $x_{1}=\phi_{1}%
(T_{0},0,2)=2T_{0}$ and%
\[
\phi_{2}(T_{0},y_{0})-y_{1}=y_{0}-y_{1}<\varepsilon^{\prime}(2T_{0})\text{,
i.e., }y_{1}>y_{0}-\varepsilon^{\prime}(2T_{0})\geq2-\varepsilon^{\prime
}(2T).
\]
We may assume that $y_{1}>2-\varepsilon^{\prime}(2T)>1$. In general a jump
occurs at%
\[
\phi_{2}(T_{i},x_{i},y_{i})-y_{i+1}=y_{i}-y_{i+1}<\varepsilon^{\prime}%
(\phi_{1}(T_{i},x_{i},y_{i}))=\varepsilon^{\prime}(x_{i}+T_{i}y_{i}),
\]
hence $y_{i+1}>y_{i}-\varepsilon^{\prime}(x_{i}+T_{i}y_{i})$ and
$x_{i+1}=x_{i}+T_{i}y_{i}$. Since $\varepsilon^{\prime}(x_{i}+T_{i}%
y_{i})<\varepsilon^{\prime}(x_{i})$ it follows that $y_{i+1}>y_{i}%
-\varepsilon^{\prime}(x_{i})$ implying%
\[
y_{i}>y_{1}-\sum\nolimits_{j=1}^{i-1}\varepsilon^{\prime}(x_{j})\text{ for
}i\geq2.
\]
The $x$-component satisfies $x_{i+1}=x_{i}+T_{i}y_{i}\geq x_{i}+T$ and hence
$x_{i}>x_{1}+(i-1)T$ for $i\geq2$. Since $y_{1}-1>0$ we can, additionally,
suppose that for the function $\varepsilon^{\prime}$ the following limit
exists and satisfies%
\[
\alpha:=\sum\nolimits_{j=1}^{\infty}\varepsilon^{\prime}(x_{1}+(j-1)T)<y_{1}%
-1.
\]
This implies $\sum_{j=1}^{i-1}\varepsilon^{\prime}(x_{j})<\sum_{j=1}%
^{i-1}\varepsilon^{\prime}(x_{1}+(j-1)T)\leq\alpha$. The claim follows from%
\[
y_{i}>y_{1}-\sum\nolimits_{j=1}^{i-1}\varepsilon^{\prime}(x_{j})>y_{1}%
-\alpha>1\text{ for all }i\geq2.
\]
A consequence of the claim is that there is no $(\varepsilon,T)$-chain from
$(x_{0},y_{0})=(0,2)$ to the $x$-axis.

Analogous, but technically more involved arguments show for every point
$(x_{0},y_{0})$ in the open upper and the open lower half-planes that there
are $T>0$ and $\varepsilon\in\mathcal{P}(\mathbb{R}^{2})$ such that there is
no $(\varepsilon,T)$-chain from $(x_{0},y_{0})$ to the $x$-axis. One concludes
that the strong chain recurrent set coincides with the $x$-axis.
\end{example}

\subsection{Control systems\label{Subsection2.2}}

Consider control-affine systems of the form
\begin{equation}
\dot{x}(t)=X_{0}(x(t))+\sum_{i=1}^{m}u_{i}(t)X_{i}(x(t)),\,u\in\mathcal{U},
\label{control_affine}%
\end{equation}
where $X_{0},X_{1},\ldots,X_{m}$ are smooth ($C^{\infty}$-)vector fields on a
smooth manifold $M$ and $\mathcal{U}$ is defined by (\ref{U}). We assume that
for every control $u\in\mathcal{U}$ and every initial state $x(0)=x_{0}\in M$
there exists a unique (Carath\'{e}odory) solution $\varphi(t,x_{0}%
,u),t\in\mathbb{R}$.

Recall the notion of chain controllability (cf. Colonius and Kliemann \cite[p.
80]{ColK00}, Kawan \cite[Definition 1.14]{Kawa13}), which allows for (small)
jumps between pieces of trajectories; cf. also Kawan \cite{Kawa16} and Ayala,
Da Silva, and San Martin \cite{AyDSSM17}. Here a metric $d_{M}$ compatible
with the topology on $M$ is fixed.

\begin{definition}
\label{intro2:defchains}Let $x,y\in M$. For $\varepsilon,T>0$ a controlled
$(\varepsilon,T)$\textit{-chain} $\zeta$ from $x$ to $y$ is given by
$k\in\mathbb{N},\ x_{0}=x,x_{1},\ldots,x_{k}=y\in M,\ u_{0},\ldots,u_{k-1}%
\in\mathcal{U}$, and $T_{0},\ldots,T_{k-1}\geq T$ with
\[
d_{M}(\varphi(T_{j},x_{j},u_{j}),x_{j+1})<\varepsilon\text{ }%
\,\text{for\thinspace all}\,\,j=0,\ldots,k-1.
\]
If for every $\varepsilon,T>0$ there is a controlled $(\varepsilon,T)$-chain
from $x$ to $y$, the point $x$ is chain controllable to $y$. The chain
reachable set from $x\ $and the chain controllable set to $x$ are
\begin{align*}
\mathbf{R}^{c}(x)  &  =\left\{  y\in M\left\vert x\text{ is chain controllable
to }y\right.  \right\}  ,\\
\mathbf{C}^{c}(x)  &  =\left\{  y\in M\left\vert y\text{ is chain controllable
to }x\right.  \right\}  ,
\end{align*}
respectively. A nonvoid set $F\subset M$ is said to be chain controllable, if
$x$ is chain controllable to $y$ for all $x,y\in F$.
\end{definition}

Chain controllability is a generalized version of approximate controllability
in infinite time. We define chain control sets as maximal chain controllable sets.

\begin{definition}
\label{Definition_chain_control}A nonvoid set $E\subset M$ is a \textit{chain
control set} of system (\ref{control_affine}) if for all $x,y\in E$ and
$\varepsilon,T>0$ there is a controlled $(\varepsilon,T)$-chain from $x$ to
$y$, and $E$ is maximal with this property.
\end{definition}

We cite the following result from Colonius, Santana, and Viscovini
\cite[Theorem 2.14 (iii)]{ColSV24}.

\begin{theorem}
\label{Theorem_E_omega}Let $E$ be a chain control set. Then it follows for all
$x\in E$ that $E=\mathbf{R}^{c}(x)\cap\mathbf{C}^{c}(x)$ and that there exists
$u\in\mathcal{U}$ such that $\varphi(t,x,u)\in E$ for all $t\in\mathbb{R}$.
\end{theorem}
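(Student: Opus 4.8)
The plan is to treat the two assertions separately: the set equality by a direct concatenation-and-maximality argument, and the existence of a complete trajectory by lifting to the control flow. For the equality $E=\mathbf{R}^{c}(x)\cap\mathbf{C}^{c}(x)$, the inclusion ``$\subseteq$'' is immediate, since for $y\in E$ the chain controllability of $E$ yields, for every $\varepsilon,T>0$, controlled $(\varepsilon,T)$-chains both from $x$ to $y$ and from $y$ to $x$. For ``$\supseteq$'' I would use maximality: given $y\in\mathbf{R}^{c}(x)\cap\mathbf{C}^{c}(x)$, I claim $E\cup\{y\}$ is chain controllable. The only thing to verify is that concatenating two controlled $(\varepsilon,T)$-chains (the terminal point of the first equal to the initial point of the second) again gives a controlled $(\varepsilon,T)$-chain, which is clear because all elapsed times stay $\geq T$ and all jumps stay $<\varepsilon$. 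Then for $a,b\in E\cup\{y\}$ one routes through $x$: for instance, if $a\in E$ and $b=y$, concatenate a chain from $a$ to $x$ inside $E$ with a chain from $x$ to $y$ (available since $y\in\mathbf{R}^{c}(x)$), and symmetrically in the remaining cases using $y\in\mathbf{C}^{c}(x)$. Since $E\cup\{y\}$ is chain controllable and contains $E$, maximality of $E$ forces $E\cup\{y\}=E$, hence $y\in E$.

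For the existence of a control $u$ with $\varphi(t,x,u)\in E$ for all $t\in\mathbb{R}$, the plan is to pass to the control flow $\Phi_{t}(u,p)=(\theta_{t}u,\varphi(t,p,u))$ on $\mathcal{U}\times M$, where $\theta$ is the shift and $\mathcal{U}$ is compact and metrizable in the weak$^{\ast}$ topology (because $\Omega$ is compact and convex), so that $\Phi$ is continuous. Controlled $(\varepsilon,T)$-chains in $M$ correspond, under the projection $\pi_{M}:\mathcal{U}\times M\to M$, to chains of $\Phi$; under this correspondence a chain control set $E$ is the $\pi_{M}$-image of a maximal chain transitive set $\mathcal{E}$ of $\Phi$. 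Such a set is closed and flow-invariant, so it suffices to show that $\pi_{M}|_{\mathcal{E}}$ maps onto $E$: then, given $x\in E$, one picks $(u,x)\in\mathcal{E}$, and invariance yields $\Phi_{t}(u,x)\in\mathcal{E}$, whence $\varphi(t,x,u)=\pi_{M}(\Phi_{t}(u,x))\in E$ for all $t\in\mathbb{R}$. To obtain such a pair I would use the first assertion: since $x\in\mathbf{R}^{c}(x)\cap\mathbf{C}^{c}(x)$, for each $n$ there is a $(1/n,n)$-chain from $x$ to $x$, which can be looped into bi-infinite approximate $\Phi$-orbits whose trajectory pieces lengthen and whose jumps vanish as $n\to\infty$; concatenating the controls gives $u_{n}\in\mathcal{U}$, and compactness of $\mathcal{U}$ together with continuity of $\Phi$ lets me extract a weak$^{\ast}$ limit $u$ with $(u,x)$ in the chain recurrent set of $\Phi$, hence in $\mathcal{E}$.

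The main obstacle is precisely this limiting step and the surjectivity of $\pi_{M}|_{\mathcal{E}}$. One must keep the relevant trajectory pieces inside a fixed compact subset of $M$ — here the non-compactness of $M$ is felt, and one works in a neighbourhood of the closed set $E$ — bound the number of jumps occurring in each window $[-T,T]$ so that the drift introduced by the jumps is controlled by continuous dependence of solutions on the control, and finally verify that the weak$^{\ast}$ limit trajectory does not escape $E$, for which one re-invokes that chain control sets are closed and the characterization $E=\mathbf{R}^{c}(x)\cap\mathbf{C}^{c}(x)$ just proved. Making the correspondence between controlled chains and $\Phi$-chains precise — in particular handling the control component and the shift $\theta$ on $\mathcal{U}$, along which the jumps must also be made small — is the technical heart of the argument, and is where the cited work of Colonius, Santana, and Viscovini does the real labor.
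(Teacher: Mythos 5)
First, a remark on the ground truth: the paper does not prove this theorem at all; it cites it verbatim from Colonius, Santana, and Viscovini \cite[Theorem 2.14(iii)]{ColSV24}. So there is no in-paper proof to match your argument against, and your attempt has to stand on its own. Your proof of the set equality $E=\mathbf{R}^{c}(x)\cap\mathbf{C}^{c}(x)$ does stand: the inclusion ``$\subseteq$'' is immediate from Definition \ref{Definition_chain_control}, concatenation of controlled $(\varepsilon,T)$-chains at a common endpoint is again a controlled $(\varepsilon,T)$-chain, routing through $x$ shows $E\cup\{y\}$ is chain controllable, and maximality finishes it. That part is complete and correct.

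The second assertion is where there is a genuine gap. Your mechanism is: periodically loop a $(1/n,n)$-chain from $x$ to $x$, concatenate its controls into $u_{n}$, pass to a weak$^{\ast}$ limit $u$, and conclude that $(u,x)$ lies in the chain recurrent set of $\Phi$, hence in $\mathcal{E}$. The step that fails is the conclusion that $(u,x)$ is chain recurrent (equivalently, that $\varphi(t,x,u)\in E$ for all $t$). The trajectory $\varphi(\cdot,x,u_{n})$ agrees with the chain only on its first segment $[0,T_{0}^{n}]$; at the first jump the chain moves to $x_{1}^{n}$ while the trajectory continues from $\varphi(T_{0}^{n},x,u_{0}^{n})$, and the discrepancy, though initially $<1/n$, is propagated by the flow over times of order $T_{1}^{n}\geq n$ and is not controlled. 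Over the bi-infinite time axis the orbit of $(u_{n},x)$ therefore bears no quantitative relation to the looped chain, and nothing forces the limit orbit to remain in, or even near, $E$. The standard repair is different: using the equality you proved in part one, show first that every point of the \emph{first} trajectory piece, $\varphi(t,x,u_{0}^{k})$ for $t\in[0,T_{0}^{k}]$, already lies in $E$ (it is chain reachable from $x$ by prolonging a long chain from $x$ to $x$ through continuous dependence, and chain controllable back to $x$ by following the remainder of the chain); do the same for the last piece, which ends within $1/k$ of $x$, to get backward pieces in $E$ ending near $x$. Concatenating one backward and one forward control and letting $k\to\infty$, compactness of $\mathcal{U}$, continuity of $(u,x_{0})\mapsto\varphi(t,x_{0},u)$ in the weak$^{\ast}$ topology, and closedness of $E$ then give a single $u$ with $\varphi(t,x,u)\in E$ for all $t\in\mathbb{R}$. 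You correctly name the ingredients (compactness of $\mathcal{U}$, continuous dependence, closedness of $E$), but the specific limiting construction you describe does not produce an orbit confined to $E$, and the claim that the limit is chain recurrent is asserted rather than derived; as written, the hard half of the theorem is still being outsourced to the reference.
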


Topological conjugacies between control systems are defined as follows. For
simplicity, we suppose that the sets of control functions coincide.

\begin{definition}
\label{Definition_conjugacy}Consider two control systems of the form
(\ref{control_affine}) on manifolds $M$ and $N$, respectively, with controls
$u\in\mathcal{U}$, given by%
\begin{equation}
\dot{x}(t)=X_{0}(x(t))+\sum_{i=1}^{m}u_{i}(t)X_{i}(x(t))\text{ and }\dot
{y}(t)=Y_{0}(y(t))+\sum_{i=1}^{m}u_{i}(t)Y_{i}(y(t)). \label{control_con}%
\end{equation}
Denote the trajectories by $\varphi(t,x_{0},u)$ and $\psi(t,y_{0}%
,u),t\in\mathbb{R}$, respectively. A homeomorphism $H:M\rightarrow N$ is a
topological conjugacy of these control systems, if%
\[
H(\varphi(t,x_{0},u))=\psi(t,H(x_{0}),u)\text{ for }t\in\mathbb{R},x_{0}\in
M,u\in\mathcal{U}.
\]

\end{definition}

Control systems of the form (\ref{control_affine}) come with an associated
flow called the control flow. Endow the set $\mathcal{U}$ of controls with a
metric compatible with the weak$^{\ast}$ topology on $L^{\infty}%
(\mathbb{R},\mathbb{R}^{m})$. One may choose%
\begin{equation}
d_{\mathcal{U}}(u,v)=\sum_{i=1}^{\infty}\frac{1}{2^{i}}\frac{\left\vert
\int_{\mathbb{R}}\left\langle u(t)-v(t),y_{i}(t)\right\rangle dt\right\vert
}{1+\left\vert \int_{\mathbb{R}}\left\langle u(t)-v(t),y_{i}(t)\right\rangle
dt\right\vert }, \label{metric_U}%
\end{equation}
where $\{y_{i}\}$ is an arbitrary countable dense set in $L^{1}(\mathbb{R}%
,\mathbb{R}^{m})$. A metric on $\mathcal{U}\times M$ is defined by
$d((u,x),(v,y))=\max\{d_{\mathcal{U}}(u,v),d_{M}(x,y)\}$. The control flow is
defined as%
\begin{equation}
\Phi:\mathbb{R}\times\mathcal{U}\times M\rightarrow\mathcal{U}\times
M,\,(t,u,x_{0})\mapsto(u(t+\cdot),\varphi(t,x_{0},u)), \label{cflow}%
\end{equation}
where $u(t+\cdot)(s):=u(t+s),s\in\mathbb{R}$, is the right shift. The control
flow $\Phi$ is continuous and $\mathcal{U}$ is compact and chain transitive;
cf. Colonius and Kliemann \cite[Chapter 4]{ColK00} or Kawan \cite[Section
1.4]{Kawa13}.

Chain control sets can be characterized as maximal chain transitive sets of
the control flow. By \cite[Theorem 4.3.11]{ColK00} or \cite[Proposition
1.24(iv)]{Kawa13} a chain control set $E$ yields a maximal invariant chain
transitive set $\mathcal{E}$ of the control flow $\Phi$ via
\begin{equation}
\mathcal{E}:=\{(u,x)\in\mathcal{U}\times M\left\vert \varphi(t,x,u)\in
E\mbox{
			for all }t\in\mathbb{R}\right.  \}, \label{chain_transitive1}%
\end{equation}
and for any maximal invariant chain transitive set in $\mathcal{U}\times M$
the projection to $M$ is a chain control set.

The affine control system (\ref{affine1}) and its homogeneous, linear part
(\ref{bilinear0}) generate control flows which we denote by $\Psi$ and
$\Phi^{\hom}$, respectively, on the vector bundle $\mathcal{V}:=\mathcal{U}%
\times\mathbb{R}^{n}$. Here $\Phi^{\hom}$ is a linear flow. Fundamental
information on linear flows on vector bundles is provided by Selgrade's
theorem; cf. Selgrade \cite{Selg75}, Salamon and Zehnder \cite{SalZ88},
Colonius and Kliemann \cite{ColK00, ColK14}. It provides the finest
decomposition of the underlying vector bundle into exponentially separated
subbundles and, equivalently, the finest Morse decomposition of the associated
flow on the projective bundle $\mathcal{U}\times\mathbb{P}^{n-1}$, which is
compact. The Morse sets in this decomposition are the maximal invariant chain
transitive subsets.

For the linear flow $\Phi^{\hom}$, Selgrade's theorem yields the decomposition%
\begin{equation}
\mathcal{U}\times\mathbb{R}^{n}=\mathcal{V}_{1}\oplus\cdots\oplus
\mathcal{V}_{\ell}\text{ with }1\leq\ell\leq n, \label{Sel_hom}%
\end{equation}
where the Selgrade bundles $\mathcal{V}_{i}$ are exponentially separated and
the projections $\mathbb{P}\mathcal{V}_{i}$ to the projective bundle
$\mathbb{P}\mathcal{V}$ are the maximal invariant chain transitive sets of the
induced flow $\mathbb{P}\Phi^{\hom}$ on $\mathbb{P}\mathcal{V}=\mathcal{U}%
\times\mathbb{P}^{n-1}$. The bilinear control system (\ref{bilinear0}) induces
control systems on the unit sphere $\mathbb{S}^{n-1}$ and on projective space
$\mathbb{P}^{n-1}$ (cf., e.g., Colonius and Kliemann \cite[Chapter 6]{ColK00})
with trajectories $\mathbb{P}\varphi(t,\mathbb{P}x_{0},u),t\in\mathbb{R}$, for
$x_{0}\not =0$. The projective flow $\mathbb{P}\Phi^{\hom}$ is the control
flow for the induced control system on $\mathbb{P}^{n-1}$.

Affine control system (\ref{affine1}) can be embedded into a bilinear control
system on $\mathbb{R}^{n+1}$ of the form (cf. Jurdjevic \cite{Jurd84} or
Elliott \cite[Subsection 3.8.1]{Elliott})%

\begin{equation}
\left(
\begin{array}
[c]{c}%
\dot{x}(t)\\
\dot{z}(t)
\end{array}
\right)  =\left(
\begin{array}
[c]{cc}%
A_{0} & a_{0}\\
0 & 0
\end{array}
\right)  \left(
\begin{array}
[c]{c}%
x(t)\\
z(t)
\end{array}
\right)  +\sum_{i=1}^{m}u_{i}(t)\left(
\begin{array}
[c]{cc}%
A_{i} & a_{i}\\
0 & 0
\end{array}
\right)  \left(
\begin{array}
[c]{c}%
x(t)\\
z(t)
\end{array}
\right)  ,u\in\mathcal{U}, \label{hom_d+1}%
\end{equation}
with trajectories denoted by $\psi^{1}(t,x_{0},z_{0},u),t\in\mathbb{R}$. This
control system induces control systems on the unit sphere $\mathbb{S}^{n}$ and
on projective space $\mathbb{P}^{n}$ with trajectories $\mathbb{P}\psi
^{1}(t,\mathbb{P}(x_{0},z_{0}),u),t\in\mathbb{R}$, for $(x_{0},z_{0}%
)\not =(0,0)$. The associated control flow $\Psi^{1}$ on $\mathcal{U}%
\times\mathbb{R}^{n+1}$ is linear.

The following result (cf. Colonius and Santana \cite[Corollary 32]{ColS24a})
shows that the Selgrade decomposition of $\Psi^{1}$ is closely related to the
Selgrade decomposition (\ref{Sel_hom}) of the homogeneous part\textbf{.} The
Selgrade bundles of $\Psi^{1}$ are essentially the Selgrade bundles of the
homogeneous part $\Phi^{\hom}$, with one exception.

\begin{theorem}
\label{Theorem_Cor32}Consider an affine control system of the form
(\ref{affine1}) and the associated affine control flow $\Psi$ on
$\mathcal{U}\times\mathbb{R}^{n}$. For $i\in\{1,\ldots,\ell\}$ let
$\mathcal{V}_{i}\subset\mathcal{U}\times\mathbb{R}^{n}$ be the Selgrade
bundles of the linear flow $\Phi^{\hom}$ associated with control system
(\ref{bilinear0}), and let $\mathcal{V}_{i}^{\infty}=\mathcal{V}_{i}%
\times\{0\}$.

(i) The Selgrade decomposition of the lifted flow $\Psi^{1}$ has the form
\begin{equation}
\mathcal{U}\times\mathbb{R}^{n+1}=\mathcal{V}_{1}^{\infty}\oplus\cdots
\oplus\mathcal{V}_{\ell^{+}}^{\infty}\oplus\mathcal{V}_{c}^{1}\oplus
\mathcal{V}_{\ell^{+}+\ell^{0}+1}^{\infty}\oplus\cdots\oplus\mathcal{V}_{\ell
}^{\infty}, \label{Sel4}%
\end{equation}
for some numbers $\ell^{+},\ell^{0}\geq0$ with $\ell^{+}+\ell^{0}\leq\ell$.

(ii) The subbundle $\mathcal{V}_{c}^{1}$ called the central Selgrade bundle
satisfies
\[
\mathcal{V}_{c}^{1}\cap\left(  \mathcal{U}\times\mathbb{R}^{n}\times
\{0\}\right)  =\bigoplus_{i=\ell^{+}+1}^{i=\ell^{+}+\ell^{0}}\mathcal{V}%
_{i}^{\infty}:=\mathcal{V}_{c}^{\infty}.
\]

(iii) The dimension of $\mathcal{V}_{c}^{1}$ is given by $\dim\mathcal{V}%
_{c}^{1}=1+\dim\mathcal{V}_{c}^{\infty}$, and $\dim\mathcal{V}_{c}^{1}=1$
holds if and only if $\mathcal{V}_{c}^{1}\cap\left(  \mathcal{U}%
\times\mathbb{R}^{n}\times\{0\}\right)  =\mathcal{U}\times\{0_{n}%
\}\times\{0\}$.

(iv) Suppose that (\ref{bilinear0}) is uniformly hyperbolic, i.e.,
$\mathcal{U}\times\mathbb{R}^{n}$ can be split into a stable and an unstable
subbundle. Then the central Selgrade bundle is the line bundle
\begin{equation}
\mathcal{V}_{c}^{1}=\{(u,-re(u,0),r)\in\mathcal{U}\times\mathbb{R}^{n}%
\times\mathbb{R}\left\vert u\in\mathcal{U},r\in\mathbb{R}\right.  \},
\label{6.7}%
\end{equation}
where $e(u,t),t\in\mathbb{R}$, is the unique bounded solution of
(\ref{affine1}) for $u\in\mathcal{U}$, and $\mathbb{P}\mathcal{V}_{c}^{1}$ is
a compact subset of $\mathcal{U}\times\left\{  \mathbb{P}(x,1)\left\vert
x\in\mathbb{R}^{n}\right.  \right\}  $.
\end{theorem}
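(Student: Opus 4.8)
The plan is to read off the Selgrade decomposition of the lifted flow $\Psi^{1}$ from that of $\Phi^{\hom}$ by exploiting the block-triangular structure of (\ref{hom_d+1}) and tracking the single extra spectral value contributed by the $z$-direction. The elementary but decisive observation is that the last coordinate is a first integral: the bottom row of every matrix in (\ref{hom_d+1}) is $(0,0)$, so $\dot{z}\equiv 0$. Hence the hyperplane bundle $\mathcal{U}\times\mathbb{R}^{n}\times\{0\}$ is $\Psi^{1}$-invariant, and under the identification $\mathbb{R}^{n}\times\{0\}\cong\mathbb{R}^{n}$ the restriction of $\Psi^{1}$ to it is exactly $\Phi^{\hom}$. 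Therefore each $\mathcal{V}_{i}^{\infty}=\mathcal{V}_{i}\times\{0\}$ is a $\Psi^{1}$-invariant subbundle, exponentially separated from the others and with the same Morse spectrum as $\mathcal{V}_{i}$. Dually, the induced flow on the one-dimensional quotient $(\mathcal{U}\times\mathbb{R}^{n+1})/(\mathcal{U}\times\mathbb{R}^{n}\times\{0\})$ is governed by $\dot{z}=0$ and so contributes only the spectral value $0$. Consequently the Morse spectrum of $\Psi^{1}$ is contained in the union of the Morse spectra of the $\mathcal{V}_{i}$ together with $\{0\}$.

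Next I would assemble (\ref{Sel4}). Ordering the homogeneous bundles by decreasing spectral interval, those whose spectrum lies strictly above $0$ remain exponentially separated from the new zero-growth direction and survive as the outer summands $\mathcal{V}_{1}^{\infty},\ldots,\mathcal{V}_{\ell^{+}}^{\infty}$, while those strictly below $0$ survive as $\mathcal{V}_{\ell^{+}+\ell^{0}+1}^{\infty},\ldots,\mathcal{V}_{\ell}^{\infty}$; the count of the former is $\ell^{+}$ and of the absorbed ones is $\ell^{0}$, with $\ell^{+}+\ell^{0}\leq\ell$. The remaining homogeneous bundles, namely those not exponentially separated from $0$, together with the extra direction cannot be split into exponentially separated pieces and must coalesce into a single bundle $\mathcal{V}_{c}^{1}$. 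I expect the substantive obstacle to be precisely this last point: because the off-diagonal blocks $a_{i}$ couple the $x$- and $z$-directions, the central part is not a product of the diagonal data, and one must verify directly (applying Selgrade's theorem to $\Psi^{1}$, cf.\ \cite{Selg75,ColK00}) that the projectivized flow on $\mathcal{V}_{c}^{1}$ is chain transitive and admits no finer exponentially separated splitting. This is the step where one cannot simply diagonalize and read off the answer.

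Parts (ii) and (iii) then reduce to bookkeeping. Intersecting the invariant bundle $\mathcal{V}_{c}^{1}$ with the invariant hyperplane $\mathcal{U}\times\mathbb{R}^{n}\times\{0\}$ recovers exactly the absorbed homogeneous bundles, i.e.\ $\mathcal{V}_{c}^{1}\cap(\mathcal{U}\times\mathbb{R}^{n}\times\{0\})=\bigoplus_{i=\ell^{+}+1}^{\ell^{+}+\ell^{0}}\mathcal{V}_{i}^{\infty}=\mathcal{V}_{c}^{\infty}$: the inclusion $\supseteq$ is immediate, and $\subseteq$ follows because the complementary summands in (\ref{Sel4}) already account for all the other $\mathcal{V}_{i}^{\infty}$ and the decomposition is direct. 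Since $\Psi^{1}$ enlarges $\Phi^{\hom}$ by exactly one dimension and the last coordinate is nonzero on some vector of $\mathcal{V}_{c}^{1}$, the fiber dimension grows by exactly one, giving $\dim\mathcal{V}_{c}^{1}=1+\dim\mathcal{V}_{c}^{\infty}$; the extreme case $\dim\mathcal{V}_{c}^{1}=1$ is then equivalent to $\mathcal{V}_{c}^{\infty}=\mathcal{U}\times\{0_{n}\}\times\{0\}$, i.e.\ no homogeneous bundle is absorbed.

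Finally, for (iv) I would use that uniform hyperbolicity means $0$ lies outside the Morse spectrum of $\Phi^{\hom}$, so no homogeneous bundle is absorbed, $\ell^{0}=0$, and $\mathcal{V}_{c}^{1}$ is a line bundle by (iii). To identify it, recall that the exponential dichotomy furnishes, for each $u\in\mathcal{U}$, a unique bounded solution $e(u,\cdot)$ of the affine system (\ref{affine1}). The zero-growth fiber over $u$ consists of those initial conditions $(x_{0},z_{0})$ with $z_{0}\neq 0$ whose lifted trajectory stays bounded; since $z$ is constant and the $x$-component solves the affine equation scaled by $z_{0}$, boundedness forces $x_{0}$ to be the $z_{0}$-multiple of $e(u,0)$, so the fiber is the line determined by $e(u,0)$, which is the content of (\ref{6.7}) in the sign normalization used there. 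Compactness of $\mathbb{P}\mathcal{V}_{c}^{1}$ follows because it is the continuous image of the compact base $\mathcal{U}$ under the section $u\mapsto\mathbb{P}(e(u,0),1)$, and its containment in the affine chart $\{\mathbb{P}(x,1)\mid x\in\mathbb{R}^{n}\}$ is clear since every nonzero fiber vector has nonvanishing last coordinate.
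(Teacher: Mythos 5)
First, a point of reference: the paper does not prove Theorem \ref{Theorem_Cor32} at all --- it is quoted from Colonius and Santana \cite[Corollary 32]{ColS24a} --- so there is no internal proof to compare against. Judged on its own terms, your outline has the right architecture: the invariance of $\mathcal{U}\times\mathbb{R}^{n}\times\{0\}$ with restricted flow $\Phi^{\hom}$, the trivial quotient dynamics $\dot z=0$, the dimension count giving $\dim\mathcal{V}_{c}^{1}=1+\dim\mathcal{V}_{c}^{\infty}$ in (iii), and the identification of the fiber of $\mathcal{V}_{c}^{1}$ in the hyperbolic case via the unique bounded solution (the sign in (\ref{6.7}) is a normalization issue, as you note) are all correct and essentially forced.

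The genuine gap is exactly where you flag it, and flagging it is not the same as closing it: part (i) requires showing that the chain recurrent components of the projectivized flow $\mathbb{P}\Psi^{1}$ on $\mathcal{U}\times\mathbb{P}^{n}$ consist of (a) the surviving $\mathbb{P}\mathcal{V}_{i}^{\infty}$ inside the equator and (b) exactly one further component meeting $\mathbb{P}^{n,1}$, whose intersection with the equator is a union of \emph{consecutive} $\mathbb{P}\mathcal{V}_{i}^{\infty}$. Your organizing principle --- bundles with Morse spectrum strictly above or below $0$ survive, the rest are absorbed --- is not a valid substitute for this argument: Selgrade bundles are defined by chain transitivity on the projective bundle, not by their spectral intervals, and distinct Selgrade bundles can have overlapping Morse spectrum, so ``spectrum disjoint from $\{0\}$'' does not by itself yield exponential separation from the new zero-growth direction, nor does ``spectrum containing $0$'' yield absorption. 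The theorem as stated makes no spectral claim; what has to be proved is the chain-transitivity statement, by exhibiting controlled chains between the pole $\mathbb{P}(0_{n},1)$ (or the affine part) and the absorbed equatorial Morse sets, excluding chains to the others, and verifying that $\mathcal{V}_{c}^{1}$ admits no finer exponentially separated splitting. Your argument for (ii) also leans on this unproved step: the inclusion of the absorbed $\mathcal{V}_{i}^{\infty}$ into $\mathcal{V}_{c}^{1}$ is not ``immediate'' but is precisely the assertion that those equatorial Morse sets lie in the central projective Morse set. None of that is carried out, and it is the entire substance of (i) and (ii). So the proposal is an accurate map of the proof, not a proof.
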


This theorem also yields information on chain control sets on $\mathbb{P}^{n}%
$, based on the relation between maximal chain transitive sets of control
flows and chain control sets; cf. (\ref{chain_transitive1}). Projective space
$\mathbb{P}^{n}$ can be written as the disjoint union $\mathbb{P}%
^{n}=\mathbb{P}^{n,1}\cup\,\mathbb{P}^{n,0}$, where $\mathbb{P}^{n,1}%
:=\left\{  \mathbb{P}(x,1)\left\vert x\in\mathbb{R}^{n}\right.  \right\}  $
and $\mathbb{P}^{n,0}:=\left\{  \mathbb{P}(x,0)\left\vert 0\not =%
x\in\mathbb{R}^{n}\right.  \right\}  $. Note that $\mathbb{P}^{n,1}$ can be
identified with the northern hemisphere $\mathbb{S}^{n,+}$ of the unit sphere
$\mathbb{S}^{n}$ and $\mathbb{P}^{n,0}$ corresponds to the equator of
$\mathbb{S}^{n}$. A special feature of the system induced on $\mathbb{P}^{n}$
by the bilinear system (\ref{hom_d+1}) is that the sets $\mathbb{P}^{n,1}$ and
$\mathbb{P}^{n,0}$ are invariant. In this context, we have established in
\cite[Theorem 35]{ColS24a} the following relation between the chain control
sets in these spaces.

\begin{theorem}
\label{Theorem_E}Consider an affine control system of the form (\ref{affine1}).

(i) There is a unique chain control set $_{\mathbb{P}}E_{c}$, called the
central chain control set, of the induced control system on the projective
Poincar\'{e} space $\mathbb{P}^{n}$ such that $_{\mathbb{P}}E_{c}%
\cap\mathbb{P}^{n,1}\not =\varnothing$. It is given by%
\[
_{\mathbb{P}}E_{c}=\{\mathbb{P}(x,r)\in\mathbb{P}^{n}\left\vert \exists
u\in\mathcal{U}:(u,\mathbb{P}(x,r))\in\mathbb{P}\mathcal{V}_{c}^{1}\right.
\}.
\]

(ii) If there is a chain control set $E$ in $\mathbb{R}^{n}$ of the affine
control system (\ref{affine1}), the image $\mathbb{P}\left(  E\times
\{1\}\right)  $ in the projective Poincar\'{e} space $\mathbb{P}^{n}$ is
contained in $_{\mathbb{P}}E_{c}$.

(iii) If (\ref{bilinear0}) is uniformly hyperbolic, then there is a unique
chain control set $E$ in $\mathbb{R}^{n}$. It is compact and the central chain
control set is a compact subset of $\mathbb{P}^{n,1}$ satisfying
$_{\mathbb{P}}E_{c}$ $=\left\{  \mathbb{P}\left(  x,1\right)  \left\vert x\in
E\right.  \right\}  $. For every $u\in\mathcal{U}$ there exists a unique
element $x\in E$ with $\psi(t,x,u)\in E$ for all $t\in\mathbb{R}$.
\end{theorem}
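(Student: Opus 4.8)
The plan is to reduce all three parts to Selgrade's theorem for the linear flow $\Psi^{1}$ on $\mathcal{U}\times\mathbb{R}^{n+1}$, as described in Theorem \ref{Theorem_Cor32}, combined with the correspondence (\ref{chain_transitive1}) between chain control sets and maximal invariant chain transitive sets of the control flow. The induced control flow on the projective Poincar\'{e} space $\mathbb{P}^{n}$ is the projectivized flow $\mathbb{P}\Psi^{1}$ on the compact bundle $\mathcal{U}\times\mathbb{P}^{n}$, and by Selgrade's theorem its maximal invariant chain transitive sets are precisely the projectivized Selgrade bundles occurring in the decomposition (\ref{Sel4}): the sets $\mathbb{P}\mathcal{V}_{i}^{\infty}$ together with $\mathbb{P}\mathcal{V}_{c}^{1}$. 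Projecting each of these Morse sets down to $\mathbb{P}^{n}$ yields, via (\ref{chain_transitive1}), exactly the chain control sets of the induced system, and every chain control set arises this way.

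For part (i) I would sort the Morse sets by whether they meet $\mathbb{P}^{n,1}$. Each bundle $\mathcal{V}_{i}^{\infty}=\mathcal{V}_{i}\times\{0\}$ lies in $\mathcal{U}\times\mathbb{R}^{n}\times\{0\}$, so its projectivization is contained in $\mathcal{U}\times\mathbb{P}^{n,0}$ and the associated chain control set is contained in the projective equator $\mathbb{P}^{n,0}$. By Theorem \ref{Theorem_Cor32}(ii)--(iii) the central bundle $\mathcal{V}_{c}^{1}$ has dimension $1+\dim\mathcal{V}_{c}^{\infty}$ and meets $\mathcal{U}\times\mathbb{R}^{n}\times\{0\}$ only in $\mathcal{V}_{c}^{\infty}$; hence it contains elements with nonzero last coordinate, so $\mathbb{P}\mathcal{V}_{c}^{1}$ meets $\mathcal{U}\times\mathbb{P}^{n,1}$. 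Thus $\mathbb{P}\mathcal{V}_{c}^{1}$ is the only Morse set whose projection meets $\mathbb{P}^{n,1}$, which simultaneously gives the uniqueness of the central chain control set and identifies it as the projection of $\mathbb{P}\mathcal{V}_{c}^{1}$, yielding the stated formula.

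For part (ii) I would exploit the embedding of (\ref{affine1}) into (\ref{hom_d+1}): since $\dot{z}=0$, the hyperplane $z=1$ is invariant with $\psi^{1}(t,x_{0},1,u)=(\psi(t,x_{0},u),1)$, so the map $\iota:\mathbb{R}^{n}\rightarrow\mathbb{P}^{n,1}$, $\iota(x)=\mathbb{P}(x,1)$, conjugates the affine system to the induced system on the invariant set $\mathbb{P}^{n,1}$. The step I expect to be the most delicate is verifying that $\iota$ is \emph{uniformly} continuous; realizing it through the sphere as $x\mapsto(x,1)/\sqrt{|x|^{2}+1}$ shows its distortion decays like $1/|x|$, so distances are compressed toward the equator and a global modulus of continuity exists. (It is the inverse map that fails uniform continuity, which is exactly why only an inclusion and not equality can be claimed here, and why strong chain control sets become necessary later.) Given $\varepsilon'>0$, uniform continuity furnishes $\varepsilon>0$ such that $\iota$ carries every controlled $(\varepsilon,T)$-chain in $\mathbb{R}^{n}$ to a controlled $(\varepsilon',T)$-chain in $\mathbb{P}^{n,1}$. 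Hence a chain control set $E\subset\mathbb{R}^{n}$ maps to a chain controllable subset of $\mathbb{P}^{n,1}$, which must lie in a chain control set meeting $\mathbb{P}^{n,1}$; by part (i) this is ${}_{\mathbb{P}}E_{c}$, so $\mathbb{P}(E\times\{1\})\subset{}_{\mathbb{P}}E_{c}$.

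For part (iii), uniform hyperbolicity lets me invoke Theorem \ref{Theorem_Cor32}(iv): $\mathcal{V}_{c}^{1}$ is the line bundle (\ref{6.7}) of bounded solutions and $\mathbb{P}\mathcal{V}_{c}^{1}$ is a compact subset of $\mathcal{U}\times\{\mathbb{P}(x,1)\mid x\in\mathbb{R}^{n}\}$. Consequently ${}_{\mathbb{P}}E_{c}$ is a compact subset of $\mathbb{P}^{n,1}$, bounded away from the equator, so its preimage $E:=\iota^{-1}({}_{\mathbb{P}}E_{c})$ is compact in $\mathbb{R}^{n}$; on this compact set $\iota^{-1}$ is uniformly continuous, so chains transfer back and $E$ is chain controllable, while maximality and uniqueness follow from part (i) since any chain control set in $\mathbb{R}^{n}$ maps into ${}_{\mathbb{P}}E_{c}$. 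Finally, because $\mathcal{V}_{c}^{1}$ is a line bundle its fiber over each $u\in\mathcal{U}$ projects to a single point of $\mathbb{P}^{n,1}$, determined by the unique bounded solution $e(u,\cdot)$ of (\ref{affine1}); pulling this point back through $\iota$ gives, for each $u$, a unique $x\in E$ whose trajectory is that bounded solution and hence remains in $E$ for all $t\in\mathbb{R}$.
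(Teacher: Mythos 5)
Note first that the paper does not prove Theorem \ref{Theorem_E} itself; it is quoted verbatim from \cite[Theorem 35]{ColS24a}, so there is no in-paper proof to compare against. Your reconstruction is sound and follows exactly the route the paper's setup suggests: part (i) from the Selgrade decomposition (\ref{Sel4}) together with the correspondence (\ref{chain_transitive1}), using that $\dim\mathcal{V}_{c}^{1}=1+\dim\mathcal{V}_{c}^{\infty}$ forces $\mathbb{P}\mathcal{V}_{c}^{1}$ to be the only Morse set leaving the equator; part (ii) from the global Lipschitz bound $d(h(x),h(y))\leq 2\left\Vert x-y\right\Vert _{\infty}$ (Lemma \ref{Lemma_h}), which is precisely why the inclusion goes only one way; part (iii) from Theorem \ref{Theorem_Cor32}(iv). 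The only step you gloss over is the transfer of chains back to $\mathbb{R}^{n}$ in (iii): the jump points of an $(\varepsilon^{\prime},T)$-chain in $\mathbb{P}^{n}$ need not lie in the compact set $_{\mathbb{P}}E_{c}$, so you must first argue that for $\varepsilon^{\prime}$ small such chains between points of $_{\mathbb{P}}E_{c}$ remain in a prescribed compact neighborhood $N\subset\mathbb{P}^{n,1}$ (a standard compactness argument, used by the paper itself in Lemma \ref{Lemma6.1}), and only then invoke uniform continuity of $\iota^{-1}$ on $N$. With that addition the argument is complete.
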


In the nonhyperbolic case, the information given in this theorem on the chain
control sets of affine control systems is rather scarce: It is only shown that
their images on the projective Poincar\'{e} sphere are contained in the
central chain control set $_{\mathbb{P}}E_{c}$. A main goal of this paper is
to derive more information on chain controllability properties of affine
control systems.

\section{Noncompact spaces and strong chain controllability\label{Section3}}

We introduce a stronger version of chain control sets for noncompact spaces
and provide a characterization in terms of the control flow.

Consider a control system of the form (\ref{control_affine}) on a manifold $M$
with trajectories $\varphi(t,x,u),\allowbreak t\in\mathbb{R}$, and define the
set of jump functions by $\mathcal{P}(M):=\{\varepsilon:M\rightarrow
(0,\infty),$ continuous$\}$.

Define for $\varepsilon\in\mathcal{P}(M)$ a controlled $(\varepsilon,T)$-chain
from $x$ to $y$ by $T_{0},\ldots,T_{k-1}\allowbreak\geq T,u_{0},\ldots
,u_{k-1}\in\mathcal{U}$ and $x_{0}=x,x_{1},\ldots,x_{k}=y\in M$ with%
\[
d_{M}(\varphi(T_{j},x_{j},u_{j}),x_{j+1})<\varepsilon(\varphi(T_{j}%
,x_{j},u_{j}))\text{ for }j=0,\ldots,k-1.
\]
A point $x$ is strongly chain controllable to a point $y$ if for each
$\varepsilon\in\mathcal{P}(M)$ there is a controlled $(\varepsilon,T)$-chain
from $x$ to $y$. A nonvoid subset $F$ of $M$ is called strongly chain
controllable if every point $x\in F$ is strongly chain controllable to every
point $y\in F$.

\begin{definition}
\label{Definition_strong}For a control system on $M$ a nonvoid set $E^{\ast
}\subset M$ is a strong chain control set if (i) every $x\in E^{\ast}$ is
strongly chain controllable to every $y\in E^{\ast}$ (ii) for every $x\in
E^{\ast}$ there is $u\in\mathcal{U}$ with $\varphi(t,x,u)\in E^{\ast}$ for all
$t\in\mathbb{R}$, and (iii) $E^{\ast}$ is maximal with these properties.
\end{definition}

Note that jump functions $\varepsilon\in\mathcal{P}(M)$ may have values
arbitrarily close to $0$. In particular, if the boundary $\partial M$ of $M$
is nonvoid and $\partial M\cap M=\varnothing$, there are $\varepsilon
\in\mathcal{P}(M)$ with $\lim_{d_{M}(x,\partial M)\rightarrow0}\varepsilon
(x)=0$.

The following proposition presents some basic properties of strong chain
control sets. In particular, it shows that for compact state spaces chain
control sets and strong chain control sets coincide.

\begin{proposition}
\label{Proposition_compact}(i) Every strong chain control set $E^{\ast}$ is
closed, and if the intersection of strong chain control sets $E_{1}^{\ast}$
and $E_{2}^{\ast}$ is nonvoid, then $E_{1}^{\ast}=E_{2}^{\ast}$.

(ii) Every strong chain control set is contained in a chain control set.

(iii) Every compact chain control set $E$ is also a strong chain control set.

(iv) If $M$ is compact, then chain control sets and strong chain control sets coincide.
\end{proposition}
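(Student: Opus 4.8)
The plan is to reduce each part to the defining maximality of (strong) chain control sets together with two elementary observations: that a constant function lies in $\mathcal{P}(M)$, so a strong $(\varepsilon,T)$-chain with constant $\varepsilon\equiv\varepsilon_{0}$ is exactly an ordinary $(\varepsilon_{0},T)$-chain and conversely; and that a positive continuous function attains a positive minimum on a compact set. I would prove (ii) first, as it is cleanest and is reused later. Taking $\varepsilon\in\mathcal{P}(M)$ to be a constant shows that strong chain controllability implies chain controllability, so a strong chain control set $E^{\ast}$ is chain controllable; since the union of a totally ordered family of chain controllable sets is again chain controllable, Zorn's lemma yields a maximal chain controllable set, i.e.\ a chain control set, containing $E^{\ast}$.

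For (i), to prove closedness I would show that $\overline{E^{\ast}}$ again satisfies (i)--(ii) of Definition \ref{Definition_strong}, so that maximality of $E^{\ast}$ forces $E^{\ast}=\overline{E^{\ast}}$. Property (ii) for $\overline{E^{\ast}}$ follows by choosing $x_{n}\in E^{\ast}$ with $x_{n}\to x$ and $u_{n}\in\mathcal{U}$ with $\varphi(t,x_{n},u_{n})\in E^{\ast}$, passing to a convergent subsequence $u_{n}\to u$ using compactness of $\mathcal{U}$, and invoking continuous dependence of $\varphi$ on $(x,u)$ to get $\varphi(t,x,u)\in\overline{E^{\ast}}$. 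For strong chain controllability of $\overline{E^{\ast}}$, given $x,y\in\overline{E^{\ast}}$, $\varepsilon\in\mathcal{P}(M)$ and $T>0$, I would approximate $x,y$ by $x',y'\in E^{\ast}$, take a strong $(\varepsilon',T)$-chain from $x'$ to $y'$ with $\varepsilon'\le\varepsilon/2$, and then replace its initial point $x'$ by $x$ and its terminal point $y'$ by $y$; only the first and last chain conditions change, and continuity of the time-$T_{0}$ flow map and of $\varepsilon$ makes the perturbed sequence an $(\varepsilon,T)$-chain once $x',y'$ are close enough. For the intersection statement I would verify that $E_{1}^{\ast}\cup E_{2}^{\ast}$ satisfies (i)--(ii) when $E_{1}^{\ast}\cap E_{2}^{\ast}\ne\varnothing$: (ii) is inherited pointwise, and for (i) one connects $x\in E_{i}^{\ast}$ to $y\in E_{j}^{\ast}$ by concatenating a strong chain from $x$ to a common point $z\in E_{1}^{\ast}\cap E_{2}^{\ast}$ with one from $z$ to $y$. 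Maximality of both sets then gives $E_{1}^{\ast}=E_{1}^{\ast}\cup E_{2}^{\ast}=E_{2}^{\ast}$.

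For (iii), property (ii) of a strong chain control set holds for $E$ by Theorem \ref{Theorem_E_omega}, and maximality follows by combining (ii) with the maximality of $E$ as a chain control set (any strong chain control set containing $E$ is chain controllable, hence equals $E$). The substance is strong chain controllability of $E$. I would fix $\varepsilon\in\mathcal{P}(M)$, $T>0$, $x,y\in E$, choose a compact neighborhood $K$ of $E$, and set $\varepsilon_{0}:=\min_{z\in K}\varepsilon(z)>0$, shrinking $\varepsilon_{0}$ so that the closed $\varepsilon_{0}$-neighborhood of $E$ lies in $K$. If one can produce an ordinary $(\varepsilon_{0},T)$-chain from $x$ to $y$ all of whose points $x_{j}$ lie in $E$, then each jump endpoint $\varphi(T_{j},x_{j},u_{j})$ lies in $K$, whence $\varepsilon(\varphi(T_{j},x_{j},u_{j}))\ge\varepsilon_{0}$ exceeds the jump and the chain is automatically a strong $(\varepsilon,T)$-chain.

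The hard part is producing such \emph{internal} chains, whose intermediate points remain in $E$ (equivalently in $K$): an arbitrary $(\varepsilon_{0},T)$-chain supplied by the definition of a chain control set may run far from $E$ through regions where $\varepsilon$ is arbitrarily small, and the freedom in the controls means a ``bad'' control can push a jump endpoint out of $K$. I would route around this through the lifted flow: the lift $\mathcal{E}$ of (\ref{chain_transitive1}) is a maximal invariant chain transitive set of $\Phi$, and since $E$ and $\mathcal{U}$ are compact, $\mathcal{E}\subset\mathcal{U}\times E$ is compact and invariant; on the compact set $\mathcal{E}$ the restricted flow is chain transitive internally, and projecting an internal flow chain to $M$ (the max-metric makes the $M$-component of a flow chain a controlled chain) yields the required chain from $x$ to $y$ inside $E$. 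Establishing that external chain transitivity of the compact maximal invariant set $\mathcal{E}$ forces internal chain transitivity—which ultimately rests on $\mathcal{E}$ being isolated—is the main obstacle. Finally, (iv) follows from (i)--(iii), or directly: on compact $M$ every $\varepsilon\in\mathcal{P}(M)$ satisfies $\min_{M}\varepsilon>0$, so ordinary and strong chains coincide and hence so do chain controllability and strong chain controllability, while condition (ii) of Definition \ref{Definition_strong} holds for chain control sets by Theorem \ref{Theorem_E_omega}; thus the two families of maximal sets agree.
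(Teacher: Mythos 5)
Parts (ii) and (iv) of your proposal are correct and coincide with the paper's argument (constant jump functions for one direction, a positive minimum of $\varepsilon$ on compact $M$ for the other, plus Theorem \ref{Theorem_E_omega} for property (ii) of Definition \ref{Definition_strong}). The two remaining parts each contain a genuine gap.

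In (i) your plan for closedness --- show that $\overline{E^{\ast}}$ satisfies the defining properties and invoke maximality --- is also the paper's plan, but the step ``continuity of the time-$T_{0}$ flow map and of $\varepsilon$ makes the perturbed sequence an $(\varepsilon,T)$-chain once $x',y'$ are close enough'' is circular at the initial point. How close $x'$ must be to $x$ depends on $T_{0}$ and $u_{0}$, i.e.\ on the chain, and the chain is only chosen after $x'$ is fixed; $T_{0}$ is unbounded over chains, and replacing $x'$ by $x$ changes not only the first jump distance but also the point $\varphi(T_{0},\cdot,u_{0})$ at which the jump \emph{function} $\varepsilon$ is evaluated. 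The paper's proof of (i) is devoted almost entirely to this difficulty: it inserts an artificial jump point at the fixed time $T$ so that only the time-$T$ map is involved, proves that $\inf_{j}\varepsilon(\varphi(T,x,u_{0}^{j}))>0$ using compactness of $\mathcal{U}$, and for the chains ending at $x$ it uses property (ii) of Definition \ref{Definition_strong} to terminate a chain exactly at $x^{i}$, so that the last jump is evaluated at $x^{i}\rightarrow x$ where continuity of $\varepsilon$ applies. Without some such device your argument does not close. (The terminal-point replacement has the same quantifier problem, though it is repairable by bounding $\varepsilon$ from below on a fixed ball around $y$ and shrinking $\varepsilon'$ accordingly.)

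In (iii) you have correctly observed that an ordinary $(\varepsilon_{0},T)$-chain between points of $E$ need not have its jump endpoints $\varphi(T_{j},x_{j},u_{j})$ in a region where $\varepsilon\geq\varepsilon_{0}$, and your proposed repair --- internal chain transitivity of the compact lift $\mathcal{E}$ --- is, by your own admission, left unproven; so this part is incomplete as written. For comparison, the paper's own proof of (iii) is exactly the one-line argument you reject: it sets $\varepsilon_{0}:=\min_{E}\varepsilon$ and asserts that any controlled $(\varepsilon_{0},T)$-chain is a strong $(\varepsilon,T)$-chain, which tacitly presupposes that the evaluation points lie where $\varepsilon\geq\varepsilon_{0}$. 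You have thus identified a real subtlety that the paper glosses over, but you have not resolved it either; to finish one needs an argument confining the chains (or their jump endpoints) to a suitable neighborhood of $E$, or the internal chain transitivity of $\mathcal{E}$ that you allude to.
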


\begin{proof}
(i) Suppose that a sequence $\left(  x^{i}\right)  $ in $E^{\ast}$ converges
to $x\in M$. By compactness of $\mathcal{U}$ and continuity it follows that
there exists $u\in\mathcal{U}$ with $\varphi(t,x,u)\in\overline{E^{\ast}}$ for
all $t\in\mathbb{R}$.

Consider $y\in E^{\ast}$ and let $\varepsilon\in\mathcal{P}(M),T>0$. First, we
show that there exists a controlled $(\varepsilon,T)$-chain from $y$ to $x$.
There are $v^{i}\in\mathcal{U}$ with $\varphi(-T,x^{i},v^{i})\in E^{\ast}$.
Hence there are controlled $(\varepsilon,T)$-chains $\zeta^{i}$ from $y\in
E^{\ast}$ to $\varphi(-T,x^{i},v^{i})$ satisfying $x_{0}^{i}=y,x_{k_{i}}%
^{i}=\varphi(-T,x^{i},v^{i})$, and for $j=0,\ldots,k_{i}-1$%
\[
d_{M}(\varphi(T_{j}^{i},x_{j}^{i},u_{j}^{i}),x_{j+1}^{i})<\varepsilon
(\varphi(T_{j}^{i},x_{j}^{i},u_{j}^{i})).
\]
Extend each chain $\zeta^{i}$ by defining $T_{k_{i}}^{i}:=T,x_{k_{i}+1}%
^{i}:=x,$ and $u_{k_{i}}^{i}:=v^{i}(-T+\cdot)$. Now take $i$ large enough such
that $\varepsilon(x^{i})\geq\frac{1}{2}\varepsilon(x)$ and $d_{M}%
(x^{i},x)<\frac{1}{2}\varepsilon(x)$. It follows that $\varphi(T_{k_{i}}%
^{i},x_{k_{i}}^{i},u_{k_{i}}^{i})=x^{i}$ and%
\begin{align*}
d_{M}(\varphi(T_{k_{i}}^{i},x_{k_{i}}^{i},u_{k_{i}}^{i}),x_{k_{i}+1}^{i})  &
=d_{M}(\varphi(T,\varphi(-T,x^{i},v^{i}),v^{i}(-T+\cdot)),x)\\
&  =d_{M}(x^{i},x)<\frac{1}{2}\varepsilon(x)\leq\varepsilon(x^{i}%
)=\varepsilon(\varphi(T_{k_{i}}^{i},x_{k_{i}}^{i},u_{k_{i}}^{i})).
\end{align*}
Hence this defines a controlled $(\varepsilon,T)$-chain from $y$ to $x$.

Next we show that there is a controlled $(\varepsilon,T)$-chain from $x$ to
$y$. There is a controlled $(\varepsilon,2T)$-chain $\zeta^{i}$ from $x^{i}\in
E^{\ast}$ to $y$ satisfying%
\[
d_{M}(\varphi(T_{j}^{i},x_{j}^{i},u_{j}^{i}),x_{j+1}^{i})<\varepsilon
(\varphi(T_{j}^{i},x_{j}^{i},u_{j}^{i}))\text{ for }j=0,\ldots,k_{i}-1.
\]
Introduce an artificial jump point $z_{1}^{i}:=\varphi(T,x^{i},u_{0}^{i})$ at
time $S_{0}=T$, let $v_{0}^{i}:=u_{0}^{i}$, and define $S_{1}^{i}:=T_{0}%
^{i}-T\geq T$ and $v_{1}^{i}:=u_{0}^{i}(T+\cdot)$.

Then replace $T_{0}^{i},u_{0}^{i}$ by $S_{0},v_{0}^{i}$ and $S_{1}^{i}%
,v_{1}^{i}$. This yields again a controlled $(\varepsilon,T)$-chain from
$x^{i}\in E^{\ast}$ to $y$: In fact,%
\[
d_{M}(\varphi(S_{0},x^{i},v_{0}^{i}),z_{1}^{i})=d_{M}(\varphi(T,x^{i}%
,u_{0}^{i}),\varphi(T,x^{i},u_{0}^{i}))=0<\varepsilon(\varphi(S_{0}%
,x^{i},v_{0}^{i}))
\]
and $\varphi(S_{1}^{i},z_{1}^{i},v_{1}^{i})=\varphi(T_{0}^{i}-T,\varphi
(T,x^{i},u_{0}^{i}),u_{0}^{i}(T+\cdot))=\varphi(T_{0}^{i},x^{i},u_{0}^{i})$,
hence%
\[
d_{M}(\varphi(S_{1}^{i},z_{1}^{i},v_{1}^{i}),x_{2}^{i})<\varepsilon
(\varphi(S_{1}^{i},z_{1}^{i},v_{1}^{i})).
\]
We claim that $\inf_{j\in\mathbb{N}}\varepsilon(\varphi(T,x,u_{0}^{j}))>0$. In
fact otherwise there is a subsequence $u_{i_{j}}$ such that $\varepsilon
(\varphi(T,x,u_{0}^{i_{j}}))\rightarrow0$. By compactness of $\mathcal{U}$ and
continuity it follows that there is a subsequence converging to some element
$u\in\mathcal{U}$ with $\varepsilon\left(  \varphi(T,x,u)\right)  =0$ which
contradicts the definition of $\varepsilon$.

The claim together with continuity implies that one can take $i$ large enough
such that%
\[
d_{M}(\varphi(T,x,u_{0}^{i}),\varphi(T,x^{i},u_{0}^{i}))<\frac{1}{2}\inf
_{j\in\mathbb{N}}\varepsilon(\varphi(T,x,u_{0}^{j}))\leq\frac{1}{2}%
\varepsilon(\varphi(T,x,u_{0}^{i})).
\]
Taking, if necessary, a subsequence we may assume that $u_{0}^{i}$ converges
to an element $u\in\mathcal{U}$. Then
\[
\varepsilon(\varphi(T,x,u_{0}^{i}))\rightarrow\varepsilon(\varphi
(T,x,u))\text{ and }\varepsilon(\varphi(T,x^{i},u_{0}^{i}))\rightarrow
\varepsilon(\varphi(T,x,u)).
\]
Hence for $i$ large enough $\varepsilon(\varphi(T,x,u_{0}^{i}))<2\varepsilon
(\varphi(T,x^{i},u_{0}^{i}))$. Then we replace the starting point $x^{i}$ by
$x$ and find%
\begin{align*}
d_{M}(\varphi(S_{0},x,v_{0}^{i}),z_{1}^{i})  &  =d_{M}(\varphi(T,x,v_{0}%
^{i}),z_{1}^{i})\\
&  \leq d_{M}(\varphi(T,x,v_{0}^{i}),\varphi(T,x^{i},u_{0}^{i}))+d_{M}%
(\varphi(T,x^{i},u_{0}^{i}),z_{1}^{i})\\
&  =d_{M}(\varphi(T,x,u_{0}^{i}),\varphi(T,x^{i},u_{0}^{i}))+0\\
&  <\frac{1}{2}\varepsilon(\varphi(T,x,u_{0}^{i}))<\varepsilon(\varphi
(T,x,u_{0}^{i})).
\end{align*}
This shows that the modified chain is a controlled $(\varepsilon,T)$-chain
from $x$ to $y$.

Since $\varepsilon\in\mathcal{P}(M)$ and $T>0$ are arbitrary it follows that
$x\in E^{\ast}$. If the intersection of two chain control sets is nonvoid,
they coincide by the maximality property since the concatenation of controlled
$(\varepsilon,T)$-chains is a controlled $(\varepsilon,T)$-chain.

(ii) This follows by considering constant jump functions $\varepsilon
(x)\equiv\varepsilon>0$.

(iii) In view of assertion (ii) it suffices to show strong chain
controllability in $E$. Let $\varepsilon\in\mathcal{P}(M)$. Then
$\varepsilon(x)>0$ for all $x\in E$ and compactness of $E$ imply that
$\varepsilon_{0}:=\min\left\{  \varepsilon(x):x\in E\right\}  >0$. Thus any
controlled $(\varepsilon_{0},T)$-chain defines a strong controlled
$(\varepsilon,T)$-chain, hence strong chain controllability holds in $E$.

(iv) Here chain control sets and strong chain control sets are compact. Hence
the assertion follows by (ii) and (iii).
\end{proof}

From Example \ref{Example_strong} we can construct a linear control system,
where a strong chain control set is a proper subset of a chain control set and
contains a control set, see Figure \ref{fig1}.

\begin{example}
\label{Example_linear}Consider a linear control system given by%
\[
\left(
\begin{array}
[c]{c}%
\dot{x}(t)\\
\dot{y}(t)\\
\dot{z}(t)
\end{array}
\right)  =\left(
\begin{array}
[c]{ccc}%
0 & 1 & 0\\
0 & 0 & 0\\
0 & 0 & 1
\end{array}
\right)  \left(
\begin{array}
[c]{c}%
x(t)\\
y(t)\\
z(t)
\end{array}
\right)  +\left(
\begin{array}
[c]{c}%
0\\
0\\
1
\end{array}
\right)  u(t)\text{ with }u(t)\in\Omega=[-1,1].
\]
By Example \ref{Example_strong} it follows for the subspace $\mathbb{R}%
^{2}\times\{0\}$ that the strong chain recurrent set is $\mathbb{R}%
\times\{0\}\times\{0\}$ (here the controls $u$ do not act). The control set
(i.e., the maximal set of approximate controllability) containing
$0\in\mathbb{R}^{3}$ is $D_{0}=\{(0,0)\}\times(-1,1)$. Since in the interior
of $D_{0}$ with respect to $\{(0,0)\}\times\mathbb{R}$ exact controllability
holds, the strong chain control set is%
\[
E^{\ast}=\mathbb{R}\times\{0\}\times\lbrack-1,1].
\]
On the other hand, $L^{0}=\mathbb{R}^{2}\times\{0\}$ is the center subspace of
the homogeneous (uncontrolled) part, and Colonius, Santana, and Viscovini
\cite[Theorem 4.8]{ColSV24} implies that the chain control set is%
\[
E=\overline{D_{0}}+L^{0}=\left(  \{(0,0)\}\times\lbrack-1,1]\right)  +\left(
\mathbb{R}^{2}\times\{0\}\right)  =\mathbb{R}^{2}\times\lbrack-1,1]\not =%
E^{\ast}.
\]

\end{example}

The next theorem clarifies the relation between strong chain control sets and
maximal invariant strongly chain transitive sets of the control flow. The
proof is a modification of the one for chain control sets; cf. Colonius and
Kliemann \cite[Theorem 4.3.11]{ColK00} or Kawan \cite[Proposition
1.24(iv)]{Kawa13}.

\begin{theorem}
\label{Theorem_equivalence}Consider a control system of the form
(\ref{control_affine}) on $M$ with control flow $\Phi$ on $\mathcal{U}\times
M$.

(i) If $E^{\ast}\subset M$ is a strong chain control set, then%
\begin{equation}
\mathcal{E}^{\ast}:=\{(u,x)\in\mathcal{U}\times M\left\vert \varphi(t,x,u)\in
E^{\ast}\text{ for all }t\in\mathbb{R}\right.  \} \label{lift_E}%
\end{equation}
is a maximal invariant strongly chain transitive set for the control flow
$\Phi$.

(ii) Conversely, let $\mathcal{E}^{\ast}\subset\mathcal{U}\times M$ be a
maximal invariant strongly chain transitive set for the control flow $\Phi$.
Then
\begin{equation}
E^{\ast}:=\left\{  x\in M\left\vert \exists u\in\mathcal{U}:(u,x)\in
\mathcal{E}^{\ast}\right.  \right\}  \label{down_E}%
\end{equation}
is a strong chain control set.
\end{theorem}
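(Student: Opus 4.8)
The plan is to mimic the classical correspondence between chain control sets and maximal invariant chain transitive sets of the control flow (Colonius and Kliemann \cite[Theorem 4.3.11]{ColK00}, Kawan \cite[Proposition 1.24(iv)]{Kawa13}), replacing constant jump bounds by jump functions throughout. For part (i), invariance of $\mathcal{E}^{\ast}$ is immediate from the cocycle property: if $\varphi(t,x,u)\in E^{\ast}$ for all $t$, then for every $s$ one has $\varphi(t,\varphi(s,x,u),u(s+\cdot))=\varphi(t+s,x,u)\in E^{\ast}$, so $\Phi(s,u,x)\in\mathcal{E}^{\ast}$. For strong chain transitivity, given $(u,x),(v,y)\in\mathcal{E}^{\ast}$, a jump function $\varepsilon\in\mathcal{P}(\mathcal{U}\times M)$ and $T>0$, I would first pass to the $M$-jump function $\tilde{\varepsilon}(z):=\min_{w\in\mathcal{U}}\varepsilon(w,z)$, which is continuous and strictly positive by compactness of $\mathcal{U}$. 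Strong chain controllability of $E^{\ast}$ then yields controlled $(\tilde{\varepsilon},T)$-chains in $M$ between suitable points of $E^{\ast}$. The decisive device for lifting such a chain to $\mathcal{U}\times M$ is to run it under a single \emph{concatenated} control $\omega\in\mathcal{U}$, obtained by gluing the controls $u_{0},\dots,u_{k-1}$ of the successive pieces on the intervals $[\tau_{j},\tau_{j+1})$. Taking the lifted chain points $(\omega(\tau_{j}+\cdot),x_{j})$, $j=0,\dots,k$, the control component after flowing the $j$-th piece equals $\omega(\tau_{j+1}+\cdot)$, so every control jump vanishes and the product jumps reduce to the $M$-jumps $d_{M}(\varphi(T_{j},x_{j},u_{j}),x_{j+1})<\tilde{\varepsilon}(\cdot)\le\varepsilon(\cdot)$, yielding an $(\varepsilon,T)$-chain from $(\omega,x_{0})$ to $(\omega(\tau_{k}+\cdot),x_{k})$.

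The hard part is that this chain starts at $(\omega,x_{0})$ and ends at $(\omega(\tau_{k}+\cdot),x_{k})$ rather than at the prescribed endpoints $(u,x)$ and $(v,y)$; the control components must be matched \emph{exactly}. Here I would exploit the weak$^{\ast}$ metric (\ref{metric_U}): two controls that coincide on a long interval are $d_{\mathcal{U}}$-close, since $\int_{|t|\ge R}|y_{i}(t)|\,dt\to0$ for each $y_{i}\in L^{1}$. Concretely, prepend a flow segment of length $R\ge T$ along $u$, landing at $\varphi(R,x,u)\in E^{\ast}$ (as $(u,x)\in\mathcal{E}^{\ast}$), and arrange the first piece of the inner chain to continue along $u(R+\cdot)$ for a long time; since $\omega$ may be defined arbitrarily on $(-\infty,0)$ without affecting forward trajectories, $\omega$ then agrees with $u(R+\cdot)$ on a long interval and the single control jump into $(\omega,\varphi(R,x,u))$ is $<\varepsilon$. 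Symmetrically, route the last inner piece forward along the backward trajectory of $(v,y)$, that is, from $\varphi(-S,y,v)\in E^{\ast}$ under $v(-S+\cdot)$ for time $S$, so that the chain reaches exactly $(\omega(\tau_{k}+\cdot),y)$ with $\omega(\tau_{k}+\cdot)$ agreeing with $v$ on $[-S,\infty)$; a final control jump of size $<\varepsilon$ then lands at $(v,y)$. Making $R,S$ and the boundary pieces long enough renders both matching jumps smaller than the prescribed $\varepsilon$, while the interior transfer between the two points $\varphi(r,x,u)$ and $\varphi(-S,y,v)$ of $E^{\ast}$ exists by strong chain controllability. This completes strong chain transitivity.

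For maximality in (i), I would project: if $\mathcal{N}\supseteq\mathcal{E}^{\ast}$ is invariant and strongly chain transitive, then $F:=\{x:\exists u,(u,x)\in\mathcal{N}\}\supseteq E^{\ast}$, and invariance of $\mathcal{N}$ gives property (ii) of Definition \ref{Definition_strong} for $F$. Projecting product chains (using $\hat{\varepsilon}(w,z):=\varepsilon(z)$ on $\mathcal{U}\times M$ for a given $\varepsilon\in\mathcal{P}(M)$, so that the $M$-component of an $(\hat{\varepsilon},T)$-chain is a controlled $(\varepsilon,T)$-chain) shows $F$ is strongly chain controllable; maximality of $E^{\ast}$ forces $F=E^{\ast}$, and invariance then yields $\mathcal{N}=\mathcal{E}^{\ast}$. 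Part (ii) runs along the same lines: for a maximal invariant strongly chain transitive $\mathcal{E}^{\ast}$, the set $E^{\ast}$ of (\ref{down_E}) is strongly chain controllable and satisfies property (ii) by the projection argument just described, and its maximality follows because the lift of any strongly chain controllable set obeying (ii) is---by the construction of part (i), which nowhere used maximality---an invariant strongly chain transitive set containing $\mathcal{E}^{\ast}$, hence equal to it. The only genuinely delicate point throughout is the exact endpoint matching of the control components, handled by the weak$^{\ast}$ long-agreement estimate above.
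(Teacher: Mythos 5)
Your proposal is correct and follows essentially the same route as the paper's proof: reducing the jump function on $\mathcal{U}\times M$ to one on $M$ by minimizing over the compact set $\mathcal{U}$, lifting chains in $M$ by concatenating controls so that the control-component jumps are handled via the weak$^{\ast}$ metric and agreement on long intervals, matching the prescribed endpoints $(u,x)$ and $(v,y)$ by prepending and appending trajectory segments along $u$ and $v$, and obtaining part (ii) and the maximality statements by projection. The only (harmless) difference is that you run all interior pieces under a single concatenated control $\omega$, making the interior control jumps exactly zero, whereas the paper uses a family of controls $v_{j}$ agreeing on long intervals so that those jumps are merely small.
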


\begin{proof}
(i) Let $E^{\ast}$ be a strong chain control set and consider
$(u,x),\;(v,y)\in\mathcal{E}^{\ast}$ defined by (\ref{lift_E}). Pick $T>0$ and
$\varepsilon\in\mathcal{P}(\mathcal{U}\times M)$.

\textbf{Claim 1:} Define $\varepsilon_{M}(x):=\min\left\{  \varepsilon
(u,x)\left\vert u\in\mathcal{U}\right.  \right\}  ,x\in M.$ Then
$\varepsilon_{M}\in\mathcal{P}(M)$.

By compactness of $\mathcal{U}$ it is clear that $\varepsilon_{M}(x)>0$ for
all $x\in M$ and it remains to show that $\varepsilon_{M}$ is continuous.
Consider a sequence $x_{i}\rightarrow x$. It follows that $\varepsilon
(u,x_{i})\geq\varepsilon_{M}(x_{i})$ for all $u\in\mathcal{U}$. Continuity of
$\varepsilon$ implies that $\varepsilon(u,x)\geq\lim\sup_{i\rightarrow\infty
}\varepsilon_{M}(x_{i})$ for all $u\in\mathcal{U}$ and hence $\varepsilon
_{M}(x)\geq\lim\sup_{i\rightarrow\infty}\varepsilon_{M}(x_{i})$.

By compactness of $\mathcal{U}$ there are $u_{i}\in\mathcal{U}$ with
$f(u_{i},x_{i})=\varepsilon_{M}(x_{i})$ and there is a subsequence $(u_{i_{k}%
})$ converging to some $u^{\ast}\in\mathcal{U}$. It follows that
$\varepsilon_{M}(x_{i_{k}})=\varepsilon(u_{i_{k}},x_{i_{k}})\rightarrow
\varepsilon(u^{\ast},x)\geq\varepsilon_{M}(x)$. Since this applies to any
subsequence of $(u_{i})$ it follows that $\varepsilon_{M}(x)\leq\lim
\inf_{i\rightarrow\infty}\varepsilon_{M}(x_{i})$ proving \textbf{Claim 1}.

Strong chain controllability from $\varphi(2T,x,u)\in E^{\ast}$ to
$\varphi(-T,y,v)\in E^{\ast}$ yields the existence of a controlled
$(\varepsilon_{M},T)$-chain given by $x_{0}=\varphi(2T,x,u),x_{1},\ldots
,x_{k}=\varphi(-T,y,v)$ in $M,\,u_{0},\ldots,u_{k-1}\in\mathcal{U}%
,\,T_{0},\ldots,T_{k-1}\geq T$ with
\begin{equation}
d_{M}(\varphi(T_{j},x_{j},u_{j}),x_{j+1})<\varepsilon_{M}(\varphi(T_{j}%
,x_{j},u_{j}))\text{\ for }j=0,\ldots,k-1. \label{r1}%
\end{equation}
We construct an $(\varepsilon,T)$-chain from $(u,x)$ to $(v,y)$ in the
following way. Define
\[%
\begin{array}
[c]{cccc}%
T_{-2}=T, & x_{-2}=x, & v_{-2}=u, & \\
T_{-1}=T, & x_{-1}=\varphi(T,x,u), & v_{-1}(t)= & \left\{
\begin{array}
[c]{cc}%
u(T_{-2}+t) & \text{for }t\leq T_{-1}\\
u_{0}(t-T_{-1}) & \text{for }t>T_{-1}%
\end{array}
\right.
\end{array}
\]
and let the times $T_{0},\ldots,T_{k-1}$ and the points $x_{0},\ldots,x_{k}$
be as given earlier. Furthermore, set
\[%
\begin{array}
[c]{ccc}%
T_{k}=T, & x_{k+1}=y, & v_{k+1}=v,
\end{array}
\]
and define for $j=0,\ldots,k-2$%
\begin{align*}
v_{j}(t)  &  =\left\{
\begin{array}
[c]{lll}%
v_{j-1}(T_{j-1}+t) & \text{for} & t\leq0\\
u_{j}(t) & \text{for} & 0<t\leq T_{j}\\
u_{j+1}(t-T_{j}) & \text{for} & t>T_{j},
\end{array}
\right. \\
v_{k-1}(t)  &  =\left\{
\begin{array}
[c]{lll}%
v_{k-2}(T_{k-2}+t) & \text{for} & t\leq0\\
u_{k-1}(t) & \text{for} & 0<t\leq T_{k-1}\\
v(t-T_{k-1}-T) & \text{for} & t>T_{k-1},
\end{array}
\right. \\
v_{k}(t)  &  =\left\{
\begin{array}
[c]{ll}%
v_{k-1}(T_{k-1}+t) & \text{for }t\leq0\\
v(t-T) & \text{for }t>0.
\end{array}
\right.
\end{align*}
Define a constant $\widehat{\varepsilon}$ by%
\[
\widehat{\varepsilon}:=\frac{1}{2}\min_{j=-2,\ldots,k}\varepsilon_{M}%
(\varphi(T_{j},x_{j},v_{j})).
\]
Recall the definition of the metric $d_{\mathcal{U}}$ on $\mathcal{U}$ given
in (\ref{metric_U}) and choose $N\in\mathbb{N}$ large enough such that
$\sum_{i=N+1}^{\infty}2^{-i}<\widehat{\varepsilon}$. There exists $S>0$ such
that%
\[
\int_{\mathbb{R}\setminus\left[  -S,S\right]  }\left\vert y_{i}(\tau
)\right\vert \,\,d\tau<\widehat{\varepsilon}/\mathrm{\,diam}\,U\text{ for
}i=1,\ldots,N.
\]
We can assume without loss of generality that $T\geq S$. Since $\varepsilon
\in\mathcal{P}(\mathcal{U}\times M)$ and $T\geq S$ are arbitrary, the
following claim implies that $\mathcal{E}$ is strongly chain transitive.

\textbf{Claim 2: }An $(\varepsilon,T)$-chain from $(u,x)$ to $(v,y)$ is given
by%
\[
(v_{-2},x_{-2}),\,(v_{-1},x_{-1}),\ldots,(v_{k+1},x_{k+1})\text{ in
}\mathcal{U}\times M\text{ and }T_{-2},\,T_{-1},\ldots,T_{k}\geq T.
\]
For the proof of the claim observe first that there are no jumps at
$x_{-1},x_{0}$, and $x_{k+1}$ and%
\[
\varphi(T_{j},x_{j},u_{j})=\varphi(T_{j},x_{j},v_{j})\text{ for }%
j=0,\ldots,k-1.
\]
Hence (\ref{r1}) yields for $j=-2,\ldots,k$
\begin{align*}
d_{M}(\varphi(T_{j},x_{j},v_{j}),x_{j+1})  &  <\varepsilon_{M}(\varphi
(T_{j},x_{j},v_{j}))\\
&  \leq\varepsilon(v_{j}(T_{j}+\cdot),\varphi(T_{j},x_{j},v_{j}))=\varepsilon
(\Phi_{T_{j}}(v_{j},x_{j})).
\end{align*}
Concerning the component in $\mathcal{U}$ we have to show that%
\begin{equation}
d_{\mathcal{U}}(v_{j}(T_{j}+\cdot),v_{j+1})<\varepsilon(\Phi_{T_{j}}%
(v_{j},x_{j}))\text{ for }j=-2,\,-1,\ldots,k. \label{controls}%
\end{equation}
By choice of $T$ and $N$ all $w_{1},\,w_{2}\in\mathcal{U}$ satisfy%
\begin{align*}
&  d_{\mathcal{U}}(w_{1},w_{2})=\sum_{i=1}^{\infty}\frac{1}{2^{i}}%
\frac{\left\vert \int_{\mathbb{R}}\left\langle w_{1}(t)-w_{2}(t),y_{i}%
(t)\right\rangle \,dt\right\vert }{1+\left\vert \int_{\mathbb{R}}\left\langle
w_{1}(t)-w_{2}(t),y_{i}(t)\right\rangle \,dt\right\vert }\\
&  \leq\sum_{i=1}^{N}\frac{1}{2^{i}}\left\{  \left\vert \int_{\mathbb{R}%
\setminus\left[  -T,T\right]  }\left\langle w_{1}(t)-w_{2}(t),y_{i}%
(t)\right\rangle \,dt\right\vert \right.  +\left.  \left\vert \int_{-T}%
^{T}\left\langle w_{1}(t)-w_{2}(t),y_{i}(t)\right\rangle \,dt\right\vert
\right\}  +\widehat{\varepsilon}\\
&  <2\widehat{\varepsilon}+\max_{i=1,\ldots,N}\int_{-T}^{T}\left\vert
w_{1}(t)-w_{2}(t)\right\vert \,\left\vert y_{i}(t)\right\vert \,dt.
\end{align*}
For all considered pairs of control functions in (\ref{controls}) the
integrands vanish, hence the distances are bounded by $2\widehat{\varepsilon}%
$, and for $j=0,\ldots,k-1$ the definition of $\widehat{\varepsilon}$ shows
that%
\[
2\widehat{\varepsilon}=\min_{j=-2,\ldots,k}\varepsilon_{M}(\varphi(T_{j}%
,x_{j},v_{j}))\leq\min_{j=-2,\ldots,k}\varepsilon(\Phi_{T_{j}}(v_{j},x_{j})).
\]
Thus the claim in (\ref{controls}) holds.

(ii) Let $\mathcal{E}^{\ast}$ be a maximal invariant strongly chain transitive
set in $\mathcal{U}\times M$. Suppose that for $x,y\in M$ there are
$u,v\in\mathcal{U}$ with $(u,x),(v,y)\allowbreak\in E^{\ast}$, hence
$\varphi(t,x,u),\varphi(t,y,v)\allowbreak\in E^{\ast}$ defined by
(\ref{down_E}) for all $t\in\mathbb{R}$. Choose $\varepsilon_{M}\in
\mathcal{P}(M)$ and $T>0$ and define $\varepsilon\in\mathcal{P}(\mathcal{U}%
\times M)$ by $\varepsilon(u,x):=\varepsilon_{M}(x),u\in\mathcal{U},x\in M$. A
corresponding $(\varepsilon,T)$-chain in $\mathcal{U}\times M$ for the control
flow $\Phi$ yields $x_{j},u_{j},T_{j}$ such that the corresponding
trajectories satisfy%
\begin{align*}
d_{M}(\varphi(T_{j},x_{j},u_{j}),x_{j+1})  &  \leq d(\Phi_{T_{j}}(x_{j}%
,u_{j}),(u_{j+1},x_{j+1}))<\varepsilon(\Phi_{T_{j}}(x_{j},u_{j}))\\
&  =\varepsilon_{M}(\varphi(T_{j},x_{j},u_{j})).
\end{align*}
This shows that $E^{\ast}$ defined by (\ref{down_E}) is strongly chain controllable.

The proof of (i) and (ii) is concluded by the observation that $E^{\ast}$ is a
maximal invariant strongly chain controllable set if and only if
$\mathcal{E}^{\ast}$ is a maximal invariant strongly chain transitive set.
\end{proof}

As an easy consequence, we find that conjugacies of control systems (cf.
Definition \ref{Definition_conjugacy}) preserve strong chain control sets.

\begin{corollary}
Suppose that two control systems of the form (\ref{control_con}) on $M$ and
$N$, respectively, are topologically conjugate by a homeomorphism
$H:M\rightarrow N$. Then $E^{\ast}$ is a strong chain control set in $M$ if
and only if $H(E^{\ast})$ is a strong chain control set in $N$.
\end{corollary}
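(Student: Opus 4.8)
The plan is to deduce this corollary from the two main structural results already established: Theorem~\ref{Theorem_equivalence}, which identifies strong chain control sets in a manifold with maximal invariant strongly chain transitive sets of the associated control flow, and Theorem~\ref{Theorem_components}, which shows that maximal invariant strongly chain transitive sets are preserved under flow conjugacies. The key observation is that a topological conjugacy $H:M\to N$ of the control systems (in the sense of Definition~\ref{Definition_conjugacy}) lifts to a conjugacy of the two control flows $\Phi$ on $\mathcal{U}\times M$ and $\Psi$ on $\mathcal{U}\times N$. Once this lifting is in place, the corollary follows by composing the three correspondences.

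First I would define the lifted map $\widehat{H}:\mathcal{U}\times M\to\mathcal{U}\times N$ by $\widehat{H}(u,x):=(u,H(x))$. Since $H$ is a homeomorphism and $\widehat{H}$ acts as the identity on the $\mathcal{U}$-factor, $\widehat{H}$ is a homeomorphism of $\mathcal{U}\times M$ onto $\mathcal{U}\times N$. To check that $\widehat{H}$ conjugates the control flows, recall that the control flow is given by $\Phi(t,u,x)=(u(t+\cdot),\varphi(t,x,u))$ and $\Psi(t,u,y)=(u(t+\cdot),\psi(t,y,u))$, cf.~(\ref{cflow}). Using the conjugacy relation $H(\varphi(t,x,u))=\psi(t,H(x),u)$ from Definition~\ref{Definition_conjugacy}, one computes
\[
\widehat{H}(\Phi(t,u,x))=(u(t+\cdot),H(\varphi(t,x,u)))=(u(t+\cdot),\psi(t,H(x),u))=\Psi(t,u,H(x))=\Psi(t,\widehat{H}(u,x)),
\]
so $\widehat{H}$ is a flow conjugacy in the sense of Theorem~\ref{Theorem_components}.

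With the lifted conjugacy established, I would chain the three results. Suppose $E^{\ast}$ is a strong chain control set in $M$. By Theorem~\ref{Theorem_equivalence}(i), its lift $\mathcal{E}^{\ast}=\{(u,x)\mid\varphi(t,x,u)\in E^{\ast}\text{ for all }t\}$ is a maximal invariant strongly chain transitive set for $\Phi$. By Theorem~\ref{Theorem_components}, $\widehat{H}(\mathcal{E}^{\ast})$ is then a maximal invariant strongly chain transitive set for $\Psi$; moreover the conjugacy relation gives $\widehat{H}(\mathcal{E}^{\ast})=\{(u,y)\mid\psi(t,y,u)\in H(E^{\ast})\text{ for all }t\}$, the lift of $H(E^{\ast})$. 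Finally, by Theorem~\ref{Theorem_equivalence}(ii), the projection of this maximal invariant strongly chain transitive set to $N$ is a strong chain control set, and this projection is exactly $H(E^{\ast})$. The converse follows by applying the same argument to the inverse conjugacy $H^{-1}$.

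I do not expect any serious obstacle here: the corollary is a formal consequence of the machinery already built. The one point requiring mild care is the bookkeeping identity $\widehat{H}(\mathcal{E}^{\ast})=\{(u,y)\mid\psi(t,y,u)\in H(E^{\ast})\ \forall t\}$, which must be verified from the definitions, using that $(u,H(x))\in\widehat{H}(\mathcal{E}^{\ast})$ iff $\varphi(t,x,u)\in E^{\ast}$ for all $t$, iff $\psi(t,H(x),u)=H(\varphi(t,x,u))\in H(E^{\ast})$ for all $t$. This confirms that projecting $\widehat{H}(\mathcal{E}^{\ast})$ to $N$ recovers $H(E^{\ast})$ rather than some larger or smaller set, so the identification with a strong chain control set via Theorem~\ref{Theorem_equivalence}(ii) is legitimate.
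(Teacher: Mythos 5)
Your proposal is correct and follows essentially the same route as the paper: lift $H$ to $\mathrm{id}_{\mathcal{U}}\times H$, verify it conjugates the control flows, and then chain Theorem~\ref{Theorem_equivalence}(i), Theorem~\ref{Theorem_components}, and Theorem~\ref{Theorem_equivalence}(ii), with the converse via $H^{-1}$. The only difference is that you spell out the bookkeeping identity identifying $\widehat{H}(\mathcal{E}^{\ast})$ with the lift of $H(E^{\ast})$, which the paper leaves implicit.
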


\begin{proof}
The homeomorphism $H$ can be extended to a topological conjugacy
$\mathrm{id}_{\mathcal{U}}\times H$ of the associated control flows $\Phi$ on
$\mathcal{U}\times M$ and $\Psi$ on $\mathcal{U}\times N$. By Theorem
\ref{Theorem_equivalence} the set $E^{\ast}$ is a strong chain control set if
and only if the lift $\mathcal{E}^{\ast}$ is a maximal invariant strongly
chain transitive set for $\Phi$. Theorem \ref{Theorem_components} shows that
it is mapped onto the maximal invariant strongly chain transitive set $\left(
\mathrm{id}_{\mathcal{U}}\times H\right)  (\mathcal{E}^{\ast})$, which again
by Theorem \ref{Theorem_equivalence} is the lift of a strong chain control set
which has the form $H(E^{\ast})$. The converse follows by considering $H^{-1}$.
\end{proof}

\begin{remark}
An alternative proof of this corollary uses Hurley's lemma, Lemma
\ref{Lemma2_Hurley}, directly for topologically conjugate control systems.
\end{remark}

\section{Strong chain control sets of affine systems\label{Section4}}

In this section, we consider affine control systems of the form (\ref{affine1}%
) on $\mathbb{R}^{n}$ and study the induced control systems on projective
space $\mathbb{P}^{n}$ and on the northern hemisphere $\mathbb{S}^{n,+}$ of
the Poincar\'{e} sphere,%
\[
\mathbb{S}^{n,+}:=\{s=(s_{1},\ldots,s_{n+1})\in\mathbb{S}^{n}\left\vert
{}\right.  \sum_{i=1}^{n+1}s_{i}^{2}=1,\left\vert s_{i}\right\vert
\leq1,i=1,\ldots,n+1,s_{n+1}>0\}.
\]
Define a function $h:\mathbb{R}^{n}\rightarrow\mathbb{S}^{n,+}$ by%
\begin{equation}
h(x)=h(x_{1},\ldots,x_{n}):=\left(  \frac{x_{1}}{\left\Vert (x,1)\right\Vert
},\ldots,\frac{x_{n}}{\left\Vert (x,1)\right\Vert },\frac{1}{\left\Vert
(x,1)\right\Vert }\right)  =\frac{(x,1)}{\left\Vert (x,1)\right\Vert }.
\label{h}%
\end{equation}
Note that $\left\Vert x\right\Vert \rightarrow\infty$ if and only if $h(x)$
approaches the equator%
\[
\mathbb{S}^{n,0}:=\{s=(s_{1},\ldots,s_{n+1})\in\mathbb{S}^{n}:s_{n+1}=0\}.
\]
Define a control system on $\mathbb{S}^{n}$ by the projection of the bilinear
control system (\ref{hom_d+1}); cf. Colonius and Kliemann \cite[Section
6.1]{ColK00}. With%
\[
A_{i}^{\prime}:=\left(
\begin{array}
[c]{cc}%
A_{i} & a_{i}\\
0 & 0
\end{array}
\right)  \text{ for }i=0,\ldots,m,
\]
the projected system is given by%
\begin{equation}
\dot{s}=\left[  A_{0}^{\prime}-s^{T}A_{0}^{\prime}s\cdot I\right]
s+\sum_{i=1}^{m}u_{i}(t)\left[  A_{i}^{\prime}-s^{T}A_{i}^{\prime}s\cdot
I\right]  s,u\in\mathcal{U}. \label{system_S}%
\end{equation}
It is clear that the control system (\ref{system_S}) also induces a control
system on projective space $\mathbb{P}^{n}$. The trajectories on
$\mathbb{S}^{n}$ are denoted by%
\[
\mathbb{S}\psi^{1}(t,s,u)=\frac{\psi^{1}(t,x,1,u)}{\left\Vert \psi
^{1}(t,x,1,u)\right\Vert }=\frac{\left(  \psi(t,x,u),1\right)  }{\left\Vert
\left(  \psi(t,x,u),1\right)  \right\Vert }\text{ with }s=\frac{(x,1)}%
{\left\Vert (x,1)\right\Vert }.
\]
This control-affine system can be restricted to the northern hemisphere with
trajectories $\mathbb{S}\psi^{1,+}(t,s,u),t\in\mathbb{R}$. We denote the
corresponding control flow by%
\[
\mathbb{S}\Psi^{1,+}:\mathbb{R}\times\mathcal{U}\times\mathbb{S}%
^{n,+}\rightarrow\mathcal{U}\times\mathbb{S}^{n,+},~\mathbb{S}\Psi
^{1,+}(t,u,s):=(u(t+\cdot),\mathbb{S}\psi^{1,+}(t,s,u)),
\]
for $(t,u,s)\in\mathbb{R}\times\mathcal{U}\times\mathbb{S}^{n,+}$.

\begin{remark}
\label{Remark_southern}The elements of the southern hemisphere $\mathbb{S}%
^{n,-}$ satisfy $s_{n+1}<0$ and one obtains a map $h_{-}:\mathbb{R}%
^{n}\rightarrow\mathbb{S}^{n,-}$ by%
\[
h_{-}(x)=h_{-}(x_{1},\ldots,x_{n}):=\left(  \frac{-x_{1}}{\left\Vert
(x,1)\right\Vert },\ldots,\frac{-x_{n}}{\left\Vert (x,1)\right\Vert }%
,\frac{-1}{\left\Vert (x,1)\right\Vert }\right)  .
\]
Since $\psi^{1}$ is linear there is a control system on $\mathbb{S}^{n,-}$
with trajectories%
\[
-\mathbb{S}\psi^{1}(t,s,u)=\frac{-\psi^{1}(t,x,1,u)}{\left\Vert \psi
^{1}(t,x,1,u)\right\Vert }=\frac{\psi^{1}(t,-x,-1,u)}{\left\Vert \psi
^{1}(t,x,1,u)\right\Vert }=\mathbb{S}\psi^{1}(t,-s,u).
\]
This shows that the control systems on the northern and the southern
hemispheres are conjugate.
\end{remark}

In the following, we analyze the relation between the original control system
on $\mathbb{R}^{n}$ and the induced control system on $\mathbb{S}^{n,+}$. A
metric on $\mathbb{S}^{n,+}$ is given by%
\[
d(s,s^{\prime})=\max_{i=1,\ldots,n+1}\left\vert s_{i}-s_{i}^{\prime
}\right\vert \text{ for }s=(s_{1},\ldots,s_{n+1}),s^{\prime}=(s_{1}^{\prime
},\ldots,s_{n+1}^{\prime})\in\mathbb{S}^{n,+}.
\]
On $\mathbb{R}^{n}$ we consider the maximum norm $\left\Vert \cdot\right\Vert
_{\infty}$.

\begin{lemma}
\label{Lemma_h}The map $h:\mathbb{R}^{n}\rightarrow\mathbb{S}^{n,+}$ is a
homeomorphism with inverse%
\[
h^{-1}:\mathbb{S}^{n,+}\rightarrow\mathbb{R}^{n},h^{-1}(s)=h^{-1}(s_{1}%
,\ldots,s_{n+1})=\left(  \frac{s_{1}}{s_{n+1}},\ldots,\frac{s_{n}}{s_{n+1}%
}\right)  .
\]

\end{lemma}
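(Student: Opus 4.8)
The plan is to verify directly that the explicitly given candidate inverse actually works, and that both maps are continuous. First I would establish that $h$ is well-defined, i.e. that $h(x)\in\mathbb{S}^{n,+}$ for every $x\in\mathbb{R}^n$. This is immediate from the formula $h(x)=(x,1)/\left\Vert(x,1)\right\Vert$: the image has unit norm by construction, and its last coordinate $1/\left\Vert(x,1)\right\Vert$ is strictly positive (since $\left\Vert(x,1)\right\Vert\geq 1>0$ for all $x$), so indeed $s_{n+1}>0$ as required for membership in the open northern hemisphere.

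Next I would check that the proposed map $h^{-1}$ is well-defined on $\mathbb{S}^{n,+}$ and is a genuine two-sided inverse. The formula $s\mapsto(s_1/s_{n+1},\ldots,s_n/s_{n+1})$ makes sense precisely because $s_{n+1}>0$ on $\mathbb{S}^{n,+}$, so no division by zero occurs. The verification $h^{-1}(h(x))=x$ is a one-line computation: the $i$-th coordinate of $h(x)$ divided by its last coordinate gives $(x_i/\left\Vert(x,1)\right\Vert)/(1/\left\Vert(x,1)\right\Vert)=x_i$. For the other direction $h(h^{-1}(s))=s$, writing $x=h^{-1}(s)$ one computes $(x,1)=(s_1/s_{n+1},\ldots,s_n/s_{n+1},1)=s/s_{n+1}$, so $\left\Vert(x,1)\right\Vert=\left\Vert s\right\Vert/s_{n+1}=1/s_{n+1}$ (using $\left\Vert s\right\Vert=1$ and $s_{n+1}>0$); dividing $(x,1)$ by this norm returns $s$. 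These identities show $h$ is a bijection with the stated inverse.

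Finally I would address continuity of both maps. Continuity of $h$ is clear since $x\mapsto(x,1)$ is continuous and $\left\Vert(x,1)\right\Vert$ is continuous and bounded away from zero, so the quotient is continuous. Continuity of $h^{-1}$ follows because each coordinate $s_i/s_{n+1}$ is a quotient of continuous functions whose denominator $s_{n+1}$ is strictly positive on the open hemisphere $\mathbb{S}^{n,+}$. Since both $h$ and $h^{-1}$ are continuous bijections that are mutually inverse, $h$ is a homeomorphism.

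I do not anticipate a genuine obstacle here; the lemma is essentially a bookkeeping verification that the standard central-projection (gnomonic-type) chart onto the open hemisphere is a homeomorphism. The only point requiring a moment of care is ensuring $s_{n+1}$ stays strictly positive so that $h^{-1}$ is defined and continuous, which is exactly why the result is stated for the \emph{open} hemisphere $\mathbb{S}^{n,+}$ rather than its closure; on the closure the denominator would vanish on the equator and the inverse would blow up, corresponding precisely to points at infinity in $\mathbb{R}^n$.
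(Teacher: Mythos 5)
Your proposal is correct and follows essentially the same route as the paper: both verify the explicit formula for $h^{-1}$ (you via the two-sided inverse identities, the paper via injectivity plus a surjectivity computation that produces the same formula) and then check continuity of both maps, the only difference being that the paper gives explicit metric estimates (a Lipschitz bound for $h$ and a sequential computation for $h^{-1}$) where you invoke the softer fact that quotients of continuous functions with nonvanishing denominator are continuous. Both continuity arguments are valid, so there is nothing to fix.
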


\begin{proof}
The map $h$ is injective since $h(x)=h(y)$ implies $\frac{1}{\left\Vert
(x,1)\right\Vert }=\frac{1}{\left\Vert (y,1)\right\Vert }$ and%
\[
\frac{x_{i}}{\left\Vert (x,1)\right\Vert }=\frac{y_{i}}{\left\Vert
(y,1)\right\Vert }\text{, i.e., }x_{i}=y_{i}\text{ for }i=1,\ldots,n.
\]
Surjectivity follows since for $s=(s_{1},\ldots,s_{n+1})\in\mathbb{S}^{n,+}$
the point $x=\left(  \frac{s_{1}}{s_{n+1}},\ldots,\frac{s_{n}}{s_{n+1}%
}\right)  $ satisfies%
\[
\left\Vert (x,1)\right\Vert ^{2}=\frac{s_{1}^{2}}{s_{n+1}^{2}}+\cdots
+\frac{s_{n}^{2}}{s_{n+1}^{2}}+1=\frac{1}{s_{n+1}^{2}},
\]
hence
\[
h\left(  \frac{s_{1}}{s_{n+1}},\ldots,\frac{s_{n}}{s_{n+1}}\right)  =\left(
s_{1},\ldots,s_{n},s_{n+1}\right)  .
\]
This also implies the formula for $h^{-1}$. The map $h$ is continuous by
Colonius and Santana \cite[Proposition 9(iii)]{ColS24a}, which shows that
\[
d(h(x),h(y))\leq2\left\Vert x-y\right\Vert _{\infty}\text{ for }%
x,y\in\mathbb{R}^{n}.
\]
It remains to prove the continuity of $h^{-1}$. First note that for $s,s^{\prime
}\in\mathbb{S}^{n,+}$ we find%
\begin{align*}
\left\Vert h^{-1}(s)-h^{-1}(s^{\prime})\right\Vert _{\infty}  &  =\left\Vert
\left(  \frac{s_{1}}{s_{n+1}},\ldots,\frac{s_{n}}{s_{n+1}}\right)  -\left(
\frac{s_{1}^{\prime}}{s_{n+1}^{\prime}},\ldots,\frac{s_{n}^{\prime}}%
{s_{n+1}^{\prime}}\right)  \right\Vert _{\infty}\\
&  =\max_{i=1,\ldots,n}\left\vert \frac{s_{i}}{s_{n+1}}-\frac{s_{i}^{\prime}%
}{s_{n+1}^{\prime}}\right\vert .
\end{align*}
Now let $s^{k}$ converge to $s$ in $\mathbb{S}^{n,+}$, hence $d(s^{k}%
,s)=\max_{i=1,\ldots,n+1}\left\vert s_{i}^{k}-s_{i}\right\vert \rightarrow0$.
Then it follows that%
\begin{align*}
\left\Vert h^{-1}(s^{k})-h^{-1}(s)\right\Vert _{\infty}  &  =\max
_{i=1,\ldots,n}\left\vert \frac{s_{i}^{k}}{s_{n+1}^{k}}-\frac{s_{i}}{s_{n+1}%
}\right\vert \\
&  \leq\max_{i=1,\ldots,n}\left\vert \frac{s_{i}^{k}}{s_{n+1}^{k}}-\frac
{s_{i}}{s_{n+1}^{k}}\right\vert +\max_{i=1,\ldots,n}\left\vert \frac{s_{i}%
}{s_{n+1}^{k}}-\frac{s_{i}}{s_{n+1}}\right\vert \\
&  \leq\frac{1}{\left\vert s_{n+1}^{k}\right\vert }\max_{i=1,\ldots
,n}\left\vert s_{i}^{k}-s_{i}\right\vert +\left\vert \frac{1}{s_{n+1}^{k}%
}-\frac{1}{s_{n+1}}\right\vert \max_{i=1,\ldots,n}\left\vert s_{i}\right\vert
\\
&  \leq\frac{1}{\left\vert s_{n+1}^{k}\right\vert }\max_{i=1,\ldots
,n}\left\vert s_{i}^{k}-s_{i}\right\vert +\left\vert \frac{1}{s_{n+1}^{k}%
}-\frac{1}{s_{n+1}}\right\vert .
\end{align*}
The right hand side converges to $0$ for $k\rightarrow\infty$, since
$s_{i}^{k}\rightarrow s_{i}$ for $i=1,\ldots,n+1$.
\end{proof}

The next result shows that the control flows on $\mathcal{U}\times
\mathbb{R}^{n}$ and on $\mathcal{U}\times\mathbb{S}^{n,+}$ are topologically conjugate.

\begin{theorem}
\label{Theorem_conjugate}Consider the affine control flow $\Psi$ on
$\mathcal{U}\times\mathbb{R}^{n}$ corresponding to the affine control system
(\ref{affine1}) and the control flow $\mathbb{S}\Psi^{1,+}$ on $\mathcal{U}%
\times\mathbb{S}^{n,+}$ corresponding to the induced control system
(\ref{system_S}) on $\mathbb{S}^{n,+}$.

(i) The map $\mathrm{id}_{\mathcal{U}}\times h:\mathcal{U}\times\mathbb{R}%
^{n}\rightarrow\mathcal{U}\times\mathbb{S}^{n,+}$ is a topological conjugation
of the flows $\Psi$ and $\mathbb{S}\Psi^{1,+}$, hence%
\[
\left(  \mathrm{id}_{\mathcal{U}}\times h\right)  \Psi_{t}(u,x)=\mathbb{S}%
\Psi_{t}^{1,+}(u,h(x))\text{ for }t\in\mathbb{R},(u,x)\in\mathcal{U}%
\times\mathbb{R}^{n}.
\]

(ii) The maximal invariant strongly chain transitive sets for $\Psi$ are
mapped by $\mathrm{id}_{\mathcal{U}}\times h$ onto the maximal invariant
strongly chain transitive sets for $\mathbb{S}\Psi_{t}^{1,+}$.
\end{theorem}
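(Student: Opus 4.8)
The plan is to verify (i) by a direct computation using the trajectory formulas already recorded for the embedded and projected systems, and then to obtain (ii) as an immediate application of Theorem~\ref{Theorem_components}.

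For part (i), I first observe that in the embedded bilinear system~(\ref{hom_d+1}) the last row of each matrix $A_i'$ vanishes, so the $z$-component satisfies $\dot z \equiv 0$ and is constant along trajectories. Starting from $z_0 = 1$ this gives $\psi^1(t,x,1,u) = (\psi(t,x,u),1)$ for all $t \in \mathbb{R}$, where $\psi$ is the solution of the affine system~(\ref{affine1}); this is exactly why the $x$-equation of~(\ref{hom_d+1}) reduces to~(\ref{affine1}) on the slice $z = 1$. Consequently, by the definition~(\ref{h}) of $h$ and the trajectory formula for the projected system~(\ref{system_S}) with $s = h(x) = (x,1)/\left\Vert (x,1)\right\Vert$,
\[
h(\psi(t,x,u)) = \frac{(\psi(t,x,u),1)}{\left\Vert (\psi(t,x,u),1)\right\Vert} = \frac{\psi^1(t,x,1,u)}{\left\Vert \psi^1(t,x,1,u)\right\Vert} = \mathbb{S}\psi^{1}(t,h(x),u).
\]
Since the last coordinate of $(\psi(t,x,u),1)$ equals $1>0$, the normalized point lies in the open northern hemisphere $\mathbb{S}^{n,+}$, so the right-hand side is in fact $\mathbb{S}\psi^{1,+}(t,h(x),u)$ and the hemisphere is invariant. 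The control component is the shift $u(t+\cdot)$ on both sides and is left fixed by $\mathrm{id}_{\mathcal{U}}$. Combining these observations yields
\[
(\mathrm{id}_{\mathcal{U}} \times h)\,\Psi_t(u,x) = (u(t+\cdot), h(\psi(t,x,u))) = (u(t+\cdot), \mathbb{S}\psi^{1,+}(t,h(x),u)) = \mathbb{S}\Psi_t^{1,+}(u,h(x)),
\]
which is the asserted conjugacy relation. Since $h$ is a homeomorphism by Lemma~\ref{Lemma_h}, so is $\mathrm{id}_{\mathcal{U}} \times h$, and (i) follows.

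For part (ii), I simply invoke Theorem~\ref{Theorem_components}. Applying it with $X = \mathcal{U} \times \mathbb{R}^n$ carrying the product metric, $Y = \mathcal{U} \times \mathbb{S}^{n,+}$, the two flows $\Psi$ and $\mathbb{S}\Psi^{1,+}$, and the conjugating homeomorphism $\mathrm{id}_{\mathcal{U}} \times h$ furnished by part (i), that theorem states that a set $\mathcal{M} \subset \mathcal{U} \times \mathbb{R}^n$ is a maximal invariant strongly chain transitive set for $\Psi$ if and only if its image $(\mathrm{id}_{\mathcal{U}} \times h)(\mathcal{M})$ is one for $\mathbb{S}\Psi^{1,+}$, which is precisely the claim in (ii).

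I do not expect any genuine obstacle here: the substance of the argument has already been packaged into Theorem~\ref{Theorem_components} (which converts Hurley's lemma into the invariance of strong chain transitivity under conjugacy) and into Lemma~\ref{Lemma_h} (the homeomorphism property of $h$). The only points requiring care are bookkeeping ones, namely confirming that the embedding keeps $z \equiv 1$ so that~(\ref{hom_d+1}) genuinely restricts to~(\ref{affine1}), and confirming that the positivity of the last coordinate keeps all trajectories in $\mathbb{S}^{n,+}$, so that $\mathbb{S}\Psi^{1,+}$ is indeed a well-defined flow on $\mathcal{U} \times \mathbb{S}^{n,+}$ rather than on the full sphere.
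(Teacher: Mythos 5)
Your proposal is correct and follows essentially the same route as the paper: part (i) is the same direct computation showing $h(\psi(t,x,u))=\mathbb{S}\psi^{1,+}(t,h(x),u)$ via the identity $\psi^{1}(t,x,1,u)=(\psi(t,x,u),1)$ together with Lemma \ref{Lemma_h} for the homeomorphism property, and part (ii) is exactly the paper's one-line appeal to Theorem \ref{Theorem_components}. Your extra remarks on $\dot z\equiv 0$ and the invariance of $\mathbb{S}^{n,+}$ are correct bookkeeping that the paper leaves implicit.
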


\begin{proof}
(i) By Lemma \ref{Lemma_h} the map $\mathrm{id}_{\mathcal{U}}\times h$ is a
homeomorphism and it remains to prove the conjugation equality. For the
component in $\mathbb{R}^{n}$ one computes%
\begin{align*}
h(\psi(t,x,u))  &  =\left(  \frac{\psi_{1}(t,x,u)}{\left\Vert (\psi
(t,x,u),1)\right\Vert },\ldots,\frac{\psi_{n}(t,x,u)}{\left\Vert
(\psi(t,x,u),1)\right\Vert },\frac{1}{\left\Vert (\psi(t,x,u),1)\right\Vert
}\right) \\
&  =\frac{\psi^{1}(t,x,1,u)}{\left\Vert \psi^{1}(t,x,1,u)\right\Vert }%
=\frac{\psi^{1}\left(  t,\frac{\left(  x,1\right)  }{\left\Vert
(x,1)\right\Vert },u\right)  }{\left\Vert \psi^{1}\left(  t,\frac{\left(
x,1\right)  }{\left\Vert (x,1)\right\Vert },u\right)  \right\Vert }\\
&  =\mathbb{S}\psi^{1,+}(t,h(x),u).
\end{align*}
This implies the conjugation equality%
\begin{align*}
(\mathrm{id}_{\mathcal{U}}\times h)\Psi_{t}(u,x)  &  =\left(  u(t+\cdot
),h(\psi(t,x,u))\right)  =\left(  u(t+\cdot),\mathbb{S}\psi^{1,+}%
(t,h(x),u)\right) \\
&  =\mathbb{S}\Psi_{t}^{1,+}(u,h(x)).
\end{align*}

(ii) This follows by (i) and Theorem \ref{Theorem_components}.
\end{proof}

For the strong chain control sets of the affine control system on
$\mathbb{R}^{n}$ and the induced control system on $\mathbb{S}^{n,+}$ we
obtain the following consequence.

\begin{corollary}
\label{Corollary_relation}Consider the affine control system (\ref{affine1})
on $\mathbb{R}^{n}$ and the induced control system (\ref{system_S}) on the
upper hemisphere $\mathbb{S}^{n,+}$ of the Poincar\'{e} sphere. The strong
chain control sets on $\mathbb{S}^{n,+}$ have the form $_{\mathbb{S}}E^{\ast
}=h(E^{\ast})$, where the $E^{\ast}$ are the strong chain control sets on
$\mathbb{R}^{n}$.
\end{corollary}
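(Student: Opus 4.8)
The plan is to derive this as an immediate consequence of the conjugation result, Theorem \ref{Theorem_conjugate}, combined with the flow-theoretic characterization of strong chain control sets in Theorem \ref{Theorem_equivalence}. The statement simply records that the homeomorphism $h$ carries strong chain control sets on $\mathbb{R}^n$ to strong chain control sets on $\mathbb{S}^{n,+}$, which is exactly the sort of conclusion already established for topological conjugacies of control systems.

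First I would observe that $h$ is a topological conjugacy of the two control systems in the sense of Definition \ref{Definition_conjugacy}: it is a homeomorphism by Lemma \ref{Lemma_h}, and the intertwining identity $h(\psi(t,x,u))=\mathbb{S}\psi^{1,+}(t,h(x),u)$ is precisely what is computed in the proof of Theorem \ref{Theorem_conjugate}(i). Consequently the corollary following Theorem \ref{Theorem_equivalence}, which asserts that topological conjugacies preserve strong chain control sets, applies verbatim and yields that $E^{\ast}$ is a strong chain control set in $\mathbb{R}^n$ if and only if $h(E^{\ast})$ is a strong chain control set in $\mathbb{S}^{n,+}$.

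If instead I spell out the argument through the associated control flows, I would proceed in three steps. By Theorem \ref{Theorem_equivalence}(i) each strong chain control set $E^{\ast}$ on $\mathbb{R}^n$ lifts to the maximal invariant strongly chain transitive set $\mathcal{E}^{\ast}=\{(u,x)\mid\psi(t,x,u)\in E^{\ast}\text{ for all }t\in\mathbb{R}\}$ of the affine control flow $\Psi$. Theorem \ref{Theorem_conjugate}(ii) then sends $\mathcal{E}^{\ast}$ via $\mathrm{id}_{\mathcal{U}}\times h$ onto a maximal invariant strongly chain transitive set for $\mathbb{S}\Psi^{1,+}$, and Theorem \ref{Theorem_equivalence}(ii) descends the latter to a strong chain control set on $\mathbb{S}^{n,+}$.

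The only point requiring genuine care, and thus the main (though modest) obstacle, is to confirm that the projection of the transported lift to $\mathbb{S}^{n,+}$ equals $h(E^{\ast})$ and not some larger or smaller set. I would verify that the $\mathcal{U}$-projection $\{x\mid\exists u:(u,x)\in\mathcal{E}^{\ast}\}$ recovers $E^{\ast}$ exactly: the inclusion $\subseteq$ is immediate by evaluating at $t=0$, while $\supseteq$ follows from property (ii) in Definition \ref{Definition_strong}. Since $\mathrm{id}_{\mathcal{U}}\times h$ acts as $h$ in the fiber, the image then projects precisely to $h(E^{\ast})$, giving the stated form $_{\mathbb{S}}E^{\ast}=h(E^{\ast})$. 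This bookkeeping is routine, and by Remark \ref{Remark_southern} the analogous conclusion for the southern hemisphere via $h_{-}$ follows identically.
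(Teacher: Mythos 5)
Your proposal is correct and follows essentially the same route as the paper: combine the conjugation of the control flows (Theorem \ref{Theorem_conjugate}) with the correspondence between strong chain control sets and maximal invariant strongly chain transitive sets (Theorem \ref{Theorem_equivalence}). The extra check that the lift projects back exactly onto $E^{\ast}$ is a sound (and welcome) piece of bookkeeping that the paper leaves implicit.
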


\begin{proof}
Theorem \ref{Theorem_conjugate} shows that the maximal invariant strongly
chain transitive sets on $\mathcal{U}\times\mathbb{R}^{n}$ for $\Psi$ are
mapped by $\mathrm{id}_{\mathcal{U}}\times h$ onto the maximal invariant
strongly chain transitive sets on $\mathcal{U}\times\mathbb{S}^{n,+}$ for
$\mathbb{S}\Psi_{t}^{1,+}$. By Theorem \ref{Theorem_equivalence} the strong
chain control sets uniquely correspond to the maximal invariant strongly chain
transitive sets of the control flows. Thus the assertion follows.
\end{proof}

Next we briefly indicate the relation between the affine control system
(\ref{affine1}) on $\mathbb{R}^{n}$ and the induced control system on
$\mathbb{P}^{n}$. Recall from Subsection \ref{Subsection2.2} that
$\mathbb{P}^{n,1}=\{\mathbb{P}(x,1)\in\mathbb{P}^{n}\left\vert x\in
\mathbb{R}^{n}\right.  \}$.

\begin{theorem}
\label{Theorem_projective}Consider the affine control flow $\Psi$ on
$\mathcal{U}\times\mathbb{R}^{n}$ corresponding to the affine control system
(\ref{affine1}) and the control flow $\mathbb{P}\Psi^{1}$ on $\mathcal{U}%
\times\mathbb{P}^{n}$ corresponding to the control system induced by
(\ref{hom_d+1}) on $\mathbb{P}^{n}$. Let $\mathbb{P}h:\mathbb{R}%
^{n}\rightarrow\mathbb{P}^{n,1}$ be the map $\mathbb{P}h(x):=\mathbb{P}%
(x,1),x\in\mathbb{R}^{n}$.

(i) The map $\mathrm{id}_{\mathcal{U}}\times\mathbb{P}h:\mathcal{U}%
\times\mathbb{R}^{n}\rightarrow\mathcal{U}\times\mathbb{P}^{n,1}$ is a
topological conjugation of the flows $\Psi$ and $\mathbb{P}\Psi^{1}$
restricted to $\mathbb{P}^{n,1}$, hence%
\[
\left(  \mathrm{id}_{\mathcal{U}}\times\mathbb{P}h\right)  \Psi_{t}%
(u,x)=\mathbb{P}\Psi_{t}^{1}(u,\mathbb{P}h(x))\text{ for }t\in\mathbb{R}%
,(u,x)\in\mathcal{U}\times\mathbb{R}^{n}.
\]

(ii) The maximal invariant strongly chain transitive sets for $\Psi$ are
mapped by $\mathrm{id}_{\mathcal{U}}\times\mathbb{P}h$ onto the maximal
invariant strongly chain transitive sets for $\mathbb{P}\Psi^{1}$ restricted
to $\mathcal{U}\times\mathbb{P}^{n,1}$.

(iii) The strong chain control sets on $\mathbb{P}^{n,1}$ have the form
$_{\mathbb{P}}E^{\ast}=\mathbb{P}h(E^{\ast})$, where the $E^{\ast}$ are the
strong chain control sets on $\mathbb{R}^{n}$.
\end{theorem}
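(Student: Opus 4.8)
The plan is to argue in exact parallel with the spherical case treated in Theorem~\ref{Theorem_conjugate} and Corollary~\ref{Corollary_relation}, with the map $h$ replaced by $\mathbb{P}h$. The conceptual observation driving the proof is that $\mathbb{P}h$ factors as $\mathbb{P}h=\pi\circ h$, where $\pi\colon\mathbb{S}^{n}\rightarrow\mathbb{P}^{n}$, $s\mapsto\mathbb{P}s$, is the canonical projection restricted to the open northern hemisphere; indeed $\pi(h(x))=\mathbb{P}\bigl((x,1)/\|(x,1)\|\bigr)=\mathbb{P}(x,1)=\mathbb{P}h(x)$. Since Lemma~\ref{Lemma_h} already provides that $h$ is a homeomorphism of $\mathbb{R}^{n}$ onto $\mathbb{S}^{n,+}$, it remains only to understand $\pi$ on the open hemisphere and to check the conjugation equality.

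For assertion (i) I would first verify that $\pi$ restricts to a homeomorphism of $\mathbb{S}^{n,+}$ onto $\mathbb{P}^{n,1}$. Injectivity holds because two points of $\mathbb{S}^{n,+}$ are never antipodal (both have positive last coordinate), so they determine distinct lines, while surjectivity onto $\mathbb{P}^{n,1}$ is immediate from the defining formula. Continuity of $\pi$ is standard, and continuity of the inverse follows because $\pi$ is an open map (the quotient by the free antipodal action): its restriction to the open set $\mathbb{S}^{n,+}$ is therefore an open continuous bijection onto $\mathbb{P}^{n,1}$, hence a homeomorphism. Composing with Lemma~\ref{Lemma_h} shows that $\mathbb{P}h$ is a homeomorphism of $\mathbb{R}^{n}$ onto $\mathbb{P}^{n,1}$. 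For the conjugation equality I would use that the last coordinate of (\ref{hom_d+1}) obeys $\dot z=0$, so that $z\equiv1$ and hence $\psi^{1}(t,x,1,u)=(\psi(t,x,u),1)$. This yields $\mathbb{P}h(\psi(t,x,u))=\mathbb{P}(\psi(t,x,u),1)=\mathbb{P}\psi^{1}(t,\mathbb{P}(x,1),u)=\mathbb{P}\psi^{1}(t,\mathbb{P}h(x),u)$, which together with the trivial $\mathcal{U}$-component gives $(\mathrm{id}_{\mathcal{U}}\times\mathbb{P}h)\Psi_{t}(u,x)=\mathbb{P}\Psi_{t}^{1}(u,\mathbb{P}h(x))$. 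Note that $z\equiv1\neq0$ also shows that $\mathbb{P}^{n,1}$ is invariant, so the restriction of $\mathbb{P}\Psi^{1}$ to $\mathcal{U}\times\mathbb{P}^{n,1}$ is a well-defined flow and the conjugacy is legitimate.

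With (i) established, assertion (ii) is immediate from Theorem~\ref{Theorem_components} applied to the homeomorphism $\mathrm{id}_{\mathcal{U}}\times\mathbb{P}h$, and assertion (iii) then follows exactly as in Corollary~\ref{Corollary_relation}: by Theorem~\ref{Theorem_equivalence} the strong chain control sets on $\mathbb{R}^{n}$ and on $\mathbb{P}^{n,1}$ are precisely the projections of the maximal invariant strongly chain transitive sets of the respective control flows, and these sets are carried onto one another by $\mathrm{id}_{\mathcal{U}}\times\mathbb{P}h$, whence $_{\mathbb{P}}E^{\ast}=\mathbb{P}h(E^{\ast})$. The only step requiring genuine care is the continuity of $(\mathbb{P}h)^{-1}$; everything else is a transcription of the spherical argument. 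I expect this to be the main, though still routine, obstacle, and it is handled by the openness of the covering projection $\pi$ on the open hemisphere, where the antipodal identification is absent.
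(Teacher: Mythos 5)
Your proposal is correct, and its overall architecture coincides with the paper's: establish that $\mathbb{P}h$ is a homeomorphism onto $\mathbb{P}^{n,1}$, verify the conjugation identity from $\dot{z}=0$, and then transfer strong chain transitivity and strong chain control sets via Theorem~\ref{Theorem_components} and Theorem~\ref{Theorem_equivalence}, exactly as in Theorem~\ref{Theorem_conjugate} and Corollary~\ref{Corollary_relation}. The one step the paper actually writes out in detail is the homeomorphism property, and there your route differs: the paper identifies $\mathbb{P}h$ with the inverse $(\alpha_{n+1})^{-1}$ of the standard affine chart $U_{n+1}=\mathbb{P}^{n,1}$ of projective space, which gives the homeomorphism (indeed a diffeomorphism) in one stroke from the manifold structure of $\mathbb{P}^{n}$; you instead factor $\mathbb{P}h=\pi\circ h$ through the Poincar\'e sphere, reuse Lemma~\ref{Lemma_h} for $h$, and obtain continuity of the inverse of $\pi|_{\mathbb{S}^{n,+}}$ from injectivity (no antipodal pairs in the open hemisphere) together with openness of the antipodal quotient map. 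Both arguments are sound; the chart argument is shorter and yields smoothness for free, while your factorization has the merit of recycling the already-proved spherical Lemma~\ref{Lemma_h} and making transparent why the antipodal identification causes no trouble on the open hemisphere. Your explicit verification of the conjugation equality and of the invariance of $\mathbb{P}^{n,1}$ (both via $z\equiv 1$) matches what the paper delegates to ``the same lines as'' Theorem~\ref{Theorem_conjugate}.
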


\begin{proof}
We only show that the map $\mathbb{P}h$ is a homeomorphism. Then the theorem
follows along the same lines as the proofs of Theorem \ref{Theorem_conjugate}
and Corollary \ref{Corollary_relation}.

Recall that $\mathbb{P}^{n}=(\mathbb{R}^{n+1}\setminus\{0\})/\thicksim$, where
$\thicksim$ is the equivalence relation $x\thicksim y$ if $y=\lambda x$ with
some $\lambda\not =0$. An atlas of $\mathbb{P}^{n}$ is given by $n+1$ charts
$(U_{i},\alpha_{i})$, where $U_{i}$ is the set of equivalence classes
$[x_{1}:\cdots:x_{n+1}]$ with $x_{i}\not =0$ (using homogeneous coordinates)
and $\alpha_{i}:U_{i}\rightarrow\mathbb{R}^{n}$ is defined by
\[
\alpha_{i}([x_{1}:\cdots:x_{n+1}])=\left(  \frac{x_{1}}{x_{i}},\ldots
,\widehat{\frac{x_{i}}{x_{i}}},\ldots,\frac{x_{n+1}}{x_{i}}\right)  ;
\]
here the hat means that the $i$-th entry is omitted. In homogeneous
coordinates $\mathbb{P}^{n,1}$ is described by
\[
\mathbb{P}^{n,1}=\left\{  [x_{1}:\cdots:x_{n}:1]\left\vert (x_{1},\ldots
,x_{n})\in\mathbb{R}^{n}\right.  \right\}  .
\]
Note that $U_{n+1}=\mathbb{P}^{n,1}$ and $\mathbb{P}h(x)=\left(  \alpha
_{n+1}\right)  ^{-1}(x),x\in\mathbb{R}^{n}$. A trivial atlas for
$\mathbb{P}^{n,1}$ is given by $\left\{  (U_{n+1},\alpha_{n+1})\right\}  $
proving that $\mathbb{P}^{n,1}$ is a manifold which under $\left(
\mathbb{P}h\right)  ^{-1}$ is diffeomorphic to $\mathbb{R}^{n}$.
\end{proof}

Finally, we analyze when strong chain control sets in $\mathbb{R}^{n}$ are unbounded.

\begin{proposition}
\label{Proposition_unbounded}Let $E^{\ast}$ be a strong chain control set of
the affine system (\ref{affine1}). Then $E^{\ast}$ is unbounded if there are
$u\in\mathcal{U}$ and $\tau>0$ with $\psi(\tau,x,u)=x\in E^{\ast}$ such that
the fundamental solution $X_{u}(t,s),t,s\in\mathbb{R}$, of the homogeneous
part (\ref{h}) satisfies $1\in\mathrm{spec}(X_{u}(\tau,0))$.
\end{proposition}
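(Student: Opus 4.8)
The plan is to show that $E^{\ast}$ contains the entire line $x+\mathbb{R}v$, where $v\neq0$ is a real eigenvector of $X_{u}(\tau,0)$ for the eigenvalue $1$; since this line is unbounded, so is $E^{\ast}$.

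First I would fix the geometry. As $1\in\mathrm{spec}(X_{u}(\tau,0))$ and $X_{u}(\tau,0)$ is a real matrix, there is a real $v\neq0$ with $X_{u}(\tau,0)v=v$. Replacing $u$ by its $\tau$-periodic extension (which again lies in $\mathcal{U}$, agrees with $u$ on $[0,\tau)$, and hence leaves both $\psi(\tau,x,u)=x$ and the matrix $X_{u}(\tau,0)$ unchanged) I may assume $u$ is $\tau$-periodic; then $X_{u}(N\tau,0)=X_{u}(\tau,0)^{N}$ and $\psi(N\tau,x,u)=x$ for every $N\in\mathbb{N}$. By the variation-of-constants formula $\psi(t,x_{0},u)=X_{u}(t,0)x_{0}+\int_{0}^{t}X_{u}(t,\sigma)(a_{0}+\sum_{i}u_{i}(\sigma)a_{i})\,d\sigma$ the affine term is independent of the initial point, so it cancels in differences and for every $s\in\mathbb{R}$ the point $x_{s}:=x+sv$ satisfies $\psi(N\tau,x_{s},u)=\psi(N\tau,x,u)+s\,X_{u}(N\tau,0)v=x+sv=x_{s}$. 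Thus the whole line consists of $\tau$-periodic points under the single control $u$.

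The key step is to produce, for arbitrary $\varepsilon\in\mathcal{P}(\mathbb{R}^{n})$ and $T>0$, a controlled $(\varepsilon,T)$-chain from $x$ to any fixed $x_{r}=x+rv$. Choose $N$ with $N\tau\geq T$ and note that the segment $\{x+sv:s\in[0,r]\}$ is compact, so $\varepsilon_{0}:=\min_{s\in[0,r]}\varepsilon(x+sv)>0$. Pick $0=\delta_{0}<\delta_{1}<\cdots<\delta_{k}=r$ with $(\delta_{i+1}-\delta_{i})\|v\|_{\infty}<\varepsilon_{0}$, which needs only finitely many steps since the total length $r\|v\|_{\infty}$ is finite. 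Put $p_{i}:=x+\delta_{i}v$ and use the control $u$ with times $T_{i}:=N\tau\geq T$. Since each $p_{i}$ is periodic, $\psi(N\tau,p_{i},u)=p_{i}$, and the jump obeys $d_{M}(\psi(N\tau,p_{i},u),p_{i+1})=(\delta_{i+1}-\delta_{i})\|v\|_{\infty}<\varepsilon_{0}\leq\varepsilon(p_{i})=\varepsilon(\psi(N\tau,p_{i},u))$. This is a controlled $(\varepsilon,T)$-chain from $x=p_{0}$ to $x_{r}=p_{k}$, and reversing the steps yields one from $x_{r}$ to $x$. This is the conceptual heart, and the point I expect to be most delicate: although jump functions in $\mathcal{P}(\mathbb{R}^{n})$ may decay to $0$ near infinity, so that points far out resist approach (cf. Example \ref{Example_strong}), each target $x_{r}$ lies at finite distance and the eigenvalue-$1$ direction keeps all relevant jump points on the compact segment, where $\varepsilon$ stays bounded below by $\varepsilon_{0}$.

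Finally I would invoke maximality. Put $L:=\{x+sv:s\in\mathbb{R}\}$, $O_{s}:=\{\psi(t,x_{s},u):t\in\mathbb{R}\}$, and $F:=E^{\ast}\cup\bigcup_{s\in\mathbb{R}}O_{s}$. Property (ii) of Definition \ref{Definition_strong} holds on $F$: points of $E^{\ast}$ keep their controls, while a point $\psi(t_{0},x_{s},u)\in O_{s}$ stays in $O_{s}\subseteq F$ under the shifted admissible control $u(t_{0}+\cdot)$. For property (i), the segment-walking construction gives mutual strong chain controllability between $x$ and every $x_{s}$ and between any $x_{s},x_{s'}$; $\tau$-periodicity joins any two points of a common orbit $O_{s}$ in both directions by exact (zero-jump) trajectory pieces of length $\geq T$; and every new point is linked to every point of $E^{\ast}$ by concatenating through $x\in E^{\ast}$, using that the concatenation of $(\varepsilon,T)$-chains is again an $(\varepsilon,T)$-chain. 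Hence $F$ satisfies (i) and (ii), so maximality of $E^{\ast}$ forces $F=E^{\ast}$, i.e. $L\subseteq E^{\ast}$. As $v\neq0$, the line $L$ is unbounded, and therefore so is $E^{\ast}$.
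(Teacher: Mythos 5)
Your proposal is correct and follows essentially the same route as the paper: extend $u$ to a $\tau$-periodic control, use the variation-of-parameters formula to show that the affine set $x+\mathbf{E}(X_{u}(\tau,0);1)$ (you use the line $x+\mathbb{R}v$) consists of $\tau$-periodic trajectories, and conclude by maximality that this unbounded set lies in $E^{\ast}$. The only difference is that you spell out the segment-walking chain construction and the verification of Definition \ref{Definition_strong}, which the paper leaves implicit.
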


\begin{proof}
Extend $u_{\left\vert [0,\tau]\right.  }$ to a $\tau$-periodic control also
denoted by $u$. By the variation-of-parameters formula, this yields the $\tau
$-periodic trajectory $\psi(t,x,u),t\in\mathbb{R}$, satisfying%
\[
\int_{0}^{\tau}X_{u}(\tau,s)\left(  a_{0}+\sum_{i=1}^{m}u_{i}(s)a_{i}\right)
ds=(I-X_{u}(\tau,0))x.
\]
The assumption means that the eigenspace $\mathbf{E}(X_{u}(\tau,0);1))$ is
nontrivial. It follows that for every $y$ in the affine subspace
$Y:=x+\mathbf{E}(X_{u}(\tau,0);1))$ there is a $\tau$-periodic solution of
(\ref{affine1}) with control $u$ starting in $y$. This implies that
$Y\subset\mathbb{R}^{n}$ consists of fixed points of the time $\tau$-map of
the differential equation%
\[
\dot{x}(t)=A_{0}x(t)+a_{0}+\sum_{i=1}^{m}u_{i}(t)[A_{i}x(t)+a_{i}].
\]
Hence $Y$ consists of a continuum of $\tau$-periodic solutions. Fixing any
$\varepsilon\in\mathcal{P}(\mathbb{R}^{n})$ and $T>0$, one finds that the
unbounded set $Y$ is contained in the strong chain control set $E^{\ast}$.
\end{proof}

Recall that a set in $\mathbb{R}^{n}$ is unbounded if and only if the closure
of its image in $\mathbb{P}^{n}$ has a nonvoid intersection with the
projective equator $\mathbb{P}^{n,0}$ or, equivalently, if the closure of its
image in $\overline{\mathbb{S}^{n,\pm}}$ has nonvoid intersection with the
equator $\mathbb{S}^{n,0}$.

\section{Chain control sets on the sphere for bilinear systems\label{Section5}%
}

We discuss the relation between chain control sets on projective space
$\mathbb{P}^{n}$ and on the sphere $\mathbb{S}^{n}$ for general bilinear
control systems; cf. Bacciotti and Vivalda \cite{BacV13} for a discussion of
complete controllability on these spaces. Then we apply these results to the
system on the Poincar\'{e} sphere induced by an affine control system. In the
rest of this paper, $\varepsilon>0$ denotes constants.

Consider a bilinear control system on $\mathbb{R}^{n+1}$ of the form%
\begin{equation}
\dot{x}(t)=\left[  A_{0}+\sum_{i=1}^{m}u_{i}(t)A_{i}\right]  x(t),\,u\in
\mathcal{U}. \label{bilinear}%
\end{equation}
The trajectories are denoted by $\varphi(t,x,u),t\in\mathbb{R}$. Selgrade's
theorem (cf. Colonius and Kliemann \cite[Theorem 7.1.2]{ColK00}) can be
applied to the corresponding linear control flow $\Phi:\mathbb{R}%
\times\mathcal{U}\times\mathbb{R}^{n+1}\rightarrow\mathcal{U}\times
\mathbb{R}^{n+1}$ and yields the Whitney sum of $\ell$ subbundles%
\[
\mathcal{U}\times\mathbb{R}^{n+1}=\mathcal{V}_{1}\oplus\cdots\oplus
\mathcal{V}_{\ell},
\]
where the projections $\mathbb{P}\mathcal{V}_{i}$ of $\mathcal{V}_{i}$ to the
projective bundle $\mathcal{U}\times\mathbb{P}^{n}$ are the chain recurrent
components of the projectivized flow $\mathbb{P}\Phi$. They yield the chain
control sets $_{\mathbb{P}}E_{1},\ldots,\,_{\mathbb{P}}E_{\ell}$ in
$\mathbb{P}^{n}$ satisfying%
\begin{align}
_{\mathbb{P}}E_{i}  &  =\{\mathbb{P}x\in\mathbb{P}^{n}\left\vert \exists
u\in\mathcal{U}:(u,\mathbb{P}x)\in\mathbb{P}\mathcal{V}_{i}\right.
\},\label{5.2}\\
\mathcal{V}_{i}  &  =\left\{  (u,x)\in\mathcal{U}\times\mathbb{R}%
^{n+1}\left\vert \mathbb{P}\varphi(t,x,u)\in\,_{\mathbb{P}}E_{i}\text{ for all
}t\in\mathbb{R}\right.  \right\}  . \label{5.3}%
\end{align}
We will analyze the chain control sets in $\mathbb{S}^{n}$. For $\varepsilon
,T>0$ let $\zeta$ be a controlled $(\varepsilon,T)$-chain from $s\in
\mathbb{S}^{n}$ to $s^{\prime}\in\mathbb{S}^{n}$ given by $T_{0}%
,\ldots,T_{k-1}\allowbreak\geq T,u^{0},\ldots,u^{k-1},s^{0}=s,s^{1}%
,\ldots,s^{k}=s^{\prime}$ with%
\[
d(\mathbb{S}\varphi(T_{i},s^{i},u^{i}),s^{i+1})<\varepsilon\text{ for all }i.
\]
Since $\mathbb{S}\varphi(T_{i},-s^{i},u^{i})=-\mathbb{S}\varphi(T_{i}%
,s^{i},u^{i})$ a controlled $(\varepsilon,T)$-chain from $-s$ to $-s^{\prime}$
is given by $T_{0},\ldots,T_{k-1}\geq T,-s^{0}=-s,-s^{1},\ldots,-s^{k}%
=-s^{\prime}$ with%
\[
d(\mathbb{S}\varphi(T_{i},-s^{i},u^{i}),-s^{i+1})<\varepsilon.
\]
We denote this chain by $-\zeta$. Recall the definition of the chain reachable
set in Definition \ref{intro2:defchains}.

\begin{lemma}
\label{Lemma_plus_minusS}Let $s,s^{\prime}\in\mathbb{S}^{n}$ with
$\mathbb{P}s,\mathbb{P}s^{\prime}$ in a chain control set $_{\mathbb{P}%
}E\subset\mathbb{P}^{n}$. Then for all $\varepsilon,T>0$ there is a controlled
$(\varepsilon,T)$-chain in $\mathbb{S}^{n}$ from $s$ to $s^{\prime}$ or to
$-s^{\prime}$. Thus the chain reachable set $\mathbf{R}^{c}(s)$ contains
$s^{\prime}$ or $-s^{\prime}$.
\end{lemma}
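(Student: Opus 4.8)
The plan is to lift a controlled chain from $\mathbb{P}^n$ to $\mathbb{S}^n$ using the two-fold covering $\pi:\mathbb{S}^n\to\mathbb{P}^n$, $\pi(s)=\mathbb{P}s$, together with the symmetry $\mathbb{S}\varphi(t,-s,u)=-\mathbb{S}\varphi(t,s,u)$ recorded just above the lemma. Since $\mathbb{P}s$ and $\mathbb{P}s'$ lie in the same chain control set $_{\mathbb{P}}E$ of the induced system on $\mathbb{P}^n$, for every $\varepsilon,T>0$ there is a controlled $(\varepsilon,T)$-chain in $\mathbb{P}^n$ from $\mathbb{P}s$ to $\mathbb{P}s'$, say with jump points $q^0=\mathbb{P}s,q^1,\ldots,q^k=\mathbb{P}s'$, controls $u^0,\ldots,u^{k-1}$, and times $T_0,\ldots,T_{k-1}\geq T$ satisfying $d_{\mathbb{P}}(\mathbb{P}\varphi(T_i,q^i,u^i),q^{i+1})<\varepsilon$. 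Both spaces being compact, chain controllability is independent of the chosen metric, so I may equip $\mathbb{P}^n$ with the quotient metric $d_{\mathbb{P}}(\mathbb{P}p,\mathbb{P}q)=\min\{d(p,q),d(p,-q)\}$ induced by $d$; here I use that the antipodal map $s\mapsto-s$ is an isometry of $(\mathbb{S}^n,d)$, so that $d_{\mathbb{P}}(\pi(a),q)=\min_{c\in\pi^{-1}(q)}d(a,c)$.

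The lifting is carried out inductively. I set $s^0:=s\in\pi^{-1}(q^0)$ and use that the sphere trajectory $\mathbb{S}\varphi(\cdot,s^0,u^0)$ covers the projective one, so $\pi(\mathbb{S}\varphi(T_0,s^0,u^0))=\mathbb{P}\varphi(T_0,q^0,u^0)$. The projective jump estimate $d_{\mathbb{P}}(\mathbb{P}\varphi(T_0,q^0,u^0),q^1)<\varepsilon$ together with the description of $d_{\mathbb{P}}$ then yields a lift $s^1\in\pi^{-1}(q^1)$ with $d(\mathbb{S}\varphi(T_0,s^0,u^0),s^1)<\varepsilon$. Iterating, at step $i$ I choose $s^{i+1}\in\pi^{-1}(q^{i+1})$ to be the lift of $q^{i+1}$ nearest to $\mathbb{S}\varphi(T_i,s^i,u^i)$, which gives $d(\mathbb{S}\varphi(T_i,s^i,u^i),s^{i+1})<\varepsilon$. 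The sequence $s^0=s,s^1,\ldots,s^k$ with controls $u^0,\ldots,u^{k-1}$ and times $T_0,\ldots,T_{k-1}$ is then a controlled $(\varepsilon,T)$-chain in $\mathbb{S}^n$, and since $s^k\in\pi^{-1}(q^k)=\pi^{-1}(\mathbb{P}s')=\{s',-s'\}$, this chain ends in $s'$ or $-s'$. This establishes the first assertion.

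For the reachable-set statement I would observe that the choice between $s'$ and $-s'$ may a priori depend on $\varepsilon$ and $T$, so that $s'\in\mathbf{R}^c(s)$ or $-s'\in\mathbf{R}^c(s)$ does not follow at once; I expect this to be the only genuine subtlety, the lifting itself being a routine covering-space argument. It is handled by monotonicity: an $(\varepsilon,T)$-chain is again an $(\varepsilon',T')$-chain whenever $\varepsilon'\geq\varepsilon$ and $T'\leq T$. If neither $s'$ nor $-s'$ belonged to $\mathbf{R}^c(s)$, there would exist $(\varepsilon_1,T_1)$ admitting no chain from $s$ to $s'$ and $(\varepsilon_2,T_2)$ admitting no chain from $s$ to $-s'$; with $\varepsilon:=\min\{\varepsilon_1,\varepsilon_2\}$ and $T:=\max\{T_1,T_2\}$, monotonicity then rules out both an $(\varepsilon,T)$-chain to $s'$ and one to $-s'$, contradicting the dichotomy just proved. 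Hence $s'$ or $-s'$ lies in $\mathbf{R}^c(s)$, as claimed.
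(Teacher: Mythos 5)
Your proof is correct and follows essentially the same route as the paper: lift the projective $(\varepsilon,T)$-chain step by step to the sphere, using the identity $d(\mathbb{P}\varphi(T_i,p^i,u^i),p^{i+1})=\min\{d(\mathbb{S}\varphi(T_i,s^i,u^i),s^{i+1}),d(\mathbb{S}\varphi(T_i,s^i,u^i),-s^{i+1})\}$ and the equivariance $\mathbb{S}\varphi(t,-s,u)=-\mathbb{S}\varphi(t,s,u)$ to pick at each jump the nearer of the two preimages. Your monotonicity argument for the final reachability claim is a welcome extra precision: the paper simply asserts that conclusion because $\varepsilon,T>0$ are arbitrary, whereas you correctly identify and resolve the quantifier subtlety that the target ($s^{\prime}$ versus $-s^{\prime}$) could a priori depend on $(\varepsilon,T)$.
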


\begin{proof}
Let $s,s^{\prime}\in\mathbb{S}^{n}$ with $\mathbb{P}s,\mathbb{P}s^{\prime}%
\in\,_{\mathbb{P}}E$. For $\varepsilon,T>0$ consider a controlled
$(\varepsilon,T)$-chain from $\mathbb{P}s$ to $\mathbb{P}s^{\prime}$ given by
$T_{0},\ldots,T_{k-1}\geq T,p^{0}=\mathbb{P}s,p^{1},\ldots,p^{k}%
=\mathbb{P}s^{\prime}$ in $_{\mathbb{P}}E$ and $u^{0},\ldots,u^{k-1}$ with%
\[
d(\mathbb{P}\varphi(T_{i},p^{i},u^{i}),p^{i+1})<\varepsilon\text{ for all }i.
\]
For $s^{i}\in\mathbb{S}^{n}$ with $\mathbb{P}s^{i}=p^{i}$ the definition of
the metric in $\mathbb{P}^{n}$ shows that%
\[
d(\mathbb{P}\varphi(T_{i},p^{i},u^{i}),p^{i+1})=\min\{d(\mathbb{S}%
\varphi(T_{i},s^{i},u^{i}),s^{i+1}),d(\mathbb{S}\varphi(T_{i},s^{i}%
,u^{i}),-s^{i+1})\}.
\]
If $\mathbb{S}\varphi(T_{0},s,u^{0})$ is close to the equator $\mathbb{S}%
^{n,0}$ it may happen that%
\[
d(\mathbb{P}\varphi(T_{0},p^{0},u^{0}),p^{1})=d(\mathbb{S}\varphi(T_{0}%
,s^{0},u^{0}),-s^{1}).
\]
Thus one can continue the controlled $(\varepsilon,T)$-chain in $\mathbb{S}%
^{n}$ with $-s^{1}$ and%
\[
d(\mathbb{S}\varphi(T_{0},s,u^{0}),-s^{1})<\varepsilon.
\]
For the next step note that%
\[
d(\mathbb{P}\varphi(T_{1},p^{1},u^{1}),p^{2})=\min\{d(\mathbb{S}\varphi
(T_{1},-s^{1},u^{1}),s^{2}),d(\mathbb{S}\varphi(T_{1},-s^{1},u^{1}%
),-s^{2})\}.
\]
If $d(\mathbb{P}\varphi(T_{1},p^{1},u^{1}),p^{2})=d(\mathbb{S}\varphi
(T_{1},-s^{1},u^{1}),-s^{2})$, we can continue the chain with $-s^{2}%
\in\mathbb{S}^{n}$ and%
\[
d(\mathbb{S}\varphi(T_{1},-s^{1},u^{1}),-s^{2})<\varepsilon.
\]
If $\mathbb{S}\varphi(T_{1},-s^{1},u^{1})$ is close to the equator it may
happen that%
\[
d(\mathbb{P}\varphi(T_{1},p^{1},u^{1}),p^{2})=d(\mathbb{S}\varphi(T_{1}%
,-s^{1},u^{1}),s^{2}).
\]
Thus we can continue the chain with $s^{2}$ and%
\[
d(\mathbb{S}\varphi(T_{1},-s^{1},u^{1}),s^{2})<\varepsilon.
\]
Continuing in this way we end up in $s^{k}=s^{\prime}$ or in $s^{k}%
=-s^{\prime}$. Thus we have constructed a controlled $(\varepsilon,T)$-chain
in $\mathbb{S}^{n}$ from $s$ to $s^{\prime}$ or to $-s^{\prime}$. Since
$\varepsilon,T>0$ are arbitrary it follows that $s^{\prime}\in\mathbf{R}%
^{c}(s)$ or $-s^{\prime}\in\mathbf{R}^{c}(s)$.
\end{proof}

\begin{lemma}
\label{Lemma_allS}Let $_{\mathbb{P}}E$ be a chain control set in
$\mathbb{P}^{n}$. Then every $s\in\mathbb{S}^{n}$ with $\mathbb{P}%
s\in\,_{\mathbb{P}}E$ is in a chain control set of the system on
$\mathbb{S}^{n}$.
\end{lemma}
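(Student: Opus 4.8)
The plan is to reduce the statement to showing that $s$ itself is chain recurrent on $\mathbb{S}^{n}$, i.e. that for every $\varepsilon,T>0$ there is a controlled $(\varepsilon,T)$-chain from $s$ to $s$. Once this is established, the set $\{s\}$ is a chain controllable set in the sense of Definition \ref{intro2:defchains}; since the union of a nested family of chain controllable sets is again chain controllable, $\{s\}$ is contained in a maximal one, which by Definition \ref{Definition_chain_control} is a chain control set of the system on $\mathbb{S}^{n}$ (cf. \cite{ColK00}). So the whole problem is to produce, for each $\varepsilon,T>0$, a closed chain from $s$ back to $s$ on the sphere.

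First I would fix $\varepsilon,T>0$ and apply Lemma \ref{Lemma_plus_minusS} with $s^{\prime}=s$: since $\mathbb{P}s\in\,_{\mathbb{P}}E$ and $_{\mathbb{P}}E$ is a chain control set, taking both endpoints equal to $\mathbb{P}s$ guarantees a controlled $(\varepsilon,T)$-chain in $\mathbb{S}^{n}$ from $s$ to $s$ or from $s$ to $-s$. If the chain ends at $s$, there is nothing left to do for this pair $(\varepsilon,T)$.

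If instead the chain $\zeta$ ends at the antipode $-s$, I would invoke the antipodal symmetry recorded immediately before Lemma \ref{Lemma_plus_minusS}: the reflected chain $-\zeta$ is an $(\varepsilon,T)$-chain from $-s$ to $-(-s)=s$. Concatenating $\zeta$ with $-\zeta$ is legitimate because the terminal point $-s$ of $\zeta$ coincides with the initial point $-s$ of $-\zeta$, and all jump estimates and the time bounds $T_{i}\geq T$ are inherited; the result is a controlled $(\varepsilon,T)$-chain from $s$ to $s$. Hence in both cases such a chain exists, and since $\varepsilon,T>0$ were arbitrary, $s\in\mathbf{R}^{c}(s)$.

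The genuinely substantive point is this last folding argument: Lemma \ref{Lemma_plus_minusS} only controls a return to the pair $\{s,-s\}$, and the sign in the "$s$ or $-s$" alternative may switch as $\varepsilon$ and $T$ vary, so one cannot simply read off recurrence. Using $-\zeta$ to turn a "return to the antipode" into a genuine closed chain is exactly what removes this $\pm$ ambiguity. Everything else—chain recurrence forcing membership in a chain control set—is the standard theory on the compact sphere $\mathbb{S}^{n}$, and I expect no further obstacle there.
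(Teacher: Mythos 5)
Your argument is correct and is essentially identical to the paper's proof: apply Lemma \ref{Lemma_plus_minusS} with $s^{\prime}=s$, and in the case where the chain $\zeta$ ends at $-s$, concatenate with the reflected chain $-\zeta$ to obtain a controlled $(\varepsilon,T)$-chain from $s$ to $s$. The only difference is that you spell out the routine final step (a chain recurrent point lies in a maximal chain controllable set), which the paper leaves implicit.
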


\begin{proof}
Let $\varepsilon,T>0$. By Lemma \ref{Lemma_plus_minusS} with $s^{\prime}=s$
there are controlled $(\varepsilon,T)$-chains from $s$ to $s$ or to $-s$. In
the second case, consider the chain $-\zeta$ which starts in $-s$. We end up
in $s$. Concatenation of the chains $\zeta$ from $s$ to $-s$ and $-\zeta$ from
$-s$ to $s$, yields a controlled $(\varepsilon,T)$-chain $\left(
-\zeta\right)  \circ\zeta$ from $s$ to $s$. Since $\varepsilon,T>0$ are
arbitrary, this shows that every $s\in\mathbb{S}^{n}$ with $\mathbb{P}%
s\in\,_{\mathbb{P}}E$ is in a chain control set of the system on
$\mathbb{S}^{n}$.
\end{proof}

The proof of the following lemma is modeled after Bacciotti and Vivalda
\cite[Lemma 3]{BacV13}, where controllability is analyzed; cf. also Colonius,
Santana, Setti \cite[Lemma 3.10]{ColSS22} for the case of control sets.

\begin{lemma}
\label{Lemma_sphere1}(i) Let $_{\mathbb{S}}E$ be a chain control set in
$\mathbb{S}^{n}$. Then the projection of $_{\mathbb{S}}E$ to $\mathbb{P}^{n}$
is contained in a chain control set $_{\mathbb{P}}E$.

(ii) Consider a chain control set $_{\mathbb{P}}E$ in $\mathbb{P}^{n}$.
Suppose that there is $s_{0}\in\mathbb{S}^{n}$ such that $\mathbb{P}s_{0}%
\in\,_{\mathbb{P}}E$ and $-s_{0}\in\mathbf{R}^{c}(s_{0})$. Then there exists a
chain control set $_{\mathbb{S}}E$ in $\mathbb{S}^{n}$ containing
$\{s\in\mathbb{S}^{n}\left\vert \mathbb{P}s\in\,_{\mathbb{P}}E\right.  \}$.
\end{lemma}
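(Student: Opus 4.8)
My plan is to work with the two-to-one projection $\pi:\mathbb{S}^{n}\rightarrow\mathbb{P}^{n}$, $s\mapsto\mathbb{P}s$, together with two structural facts already in hand: the equivariance $\mathbb{P}\varphi(t,\mathbb{P}s,u)=\mathbb{P}(\mathbb{S}\varphi(t,s,u))$ and the metric inequality $d(\mathbb{P}a,\mathbb{P}b)=\min\{d(a,b),d(a,-b)\}\le d(a,b)$ used in Lemma \ref{Lemma_plus_minusS}. I will also use repeatedly the antipodal symmetry $\mathbb{S}\varphi(t,-s,u)=-\mathbb{S}\varphi(t,s,u)$: negating a controlled $(\varepsilon,T)$-chain $\zeta$ yields the controlled $(\varepsilon,T)$-chain $-\zeta$, so $s'\in\mathbf{R}^{c}(s)$ if and only if $-s'\in\mathbf{R}^{c}(-s)$. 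Throughout I use that the concatenation of two controlled $(\varepsilon,T)$-chains is again one, so $\mathbf{R}^{c}$ is transitive for each fixed $\varepsilon,T$.

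For (i), I would take $s,s'\in{}_{\mathbb{S}}E$ and, for arbitrary $\varepsilon,T>0$, a controlled $(\varepsilon,T)$-chain in $\mathbb{S}^{n}$ from $s$ to $s'$. Applying $\pi$ to each jump point and invoking the equivariance and the metric inequality, every jump estimate can only decrease, so the image is a controlled $(\varepsilon,T)$-chain in $\mathbb{P}^{n}$ from $\mathbb{P}s$ to $\mathbb{P}s'$. Hence any two points of $\pi({}_{\mathbb{S}}E)$ are mutually chain reachable in $\mathbb{P}^{n}$, and each is chain recurrent, so they lie in a common chain control set, which is the desired ${}_{\mathbb{P}}E$.

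For (ii), the assertion reduces to showing that the entire preimage $\{s:\mathbb{P}s\in{}_{\mathbb{P}}E\}$ is chain controllable in $\mathbb{S}^{n}$; then all its points are mutually chain reachable and chain recurrent, hence contained in a single chain control set ${}_{\mathbb{S}}E$. The crux is the following claim, which upgrades the sign-ambiguous output of Lemma \ref{Lemma_plus_minusS}: for every $s$ with $\mathbb{P}s\in{}_{\mathbb{P}}E$ one has $-s\in\mathbf{R}^{c}(s)$ (and then $s\in\mathbf{R}^{c}(-s)$ by the antipodal symmetry). Granting the claim, for any $s,s'$ in the preimage Lemma \ref{Lemma_plus_minusS} gives a chain from $s$ to $s'$ or to $-s'$; in the second case I concatenate with a chain from $-s'$ to $s'$ supplied by the claim, so $s'\in\mathbf{R}^{c}(s)$ in every case.

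To prove the claim I would use the hypothesis $-s_{0}\in\mathbf{R}^{c}(s_{0})$ (hence also $s_{0}\in\mathbf{R}^{c}(-s_{0})$ by symmetry) as a built-in sign flip, arguing by cases on the outputs of Lemma \ref{Lemma_plus_minusS}. First, the lemma applied to $(s,s_{0})$ gives $s_{0}\in\mathbf{R}^{c}(s)$ or $-s_{0}\in\mathbf{R}^{c}(s)$; in the latter case concatenating with $-s_{0}\to s_{0}$ again yields $s_{0}\in\mathbf{R}^{c}(s)$, so $s_{0}\in\mathbf{R}^{c}(s)$ holds unconditionally. Second, the lemma applied to $(s_{0},s)$ gives $-s\in\mathbf{R}^{c}(s_{0})$ (done, via $s\to s_{0}\to -s$) or $s\in\mathbf{R}^{c}(s_{0})$; in the last case the antipodal symmetry gives $-s\in\mathbf{R}^{c}(-s_{0})$, and concatenating $s_{0}\to -s_{0}\to -s$ again produces $-s\in\mathbf{R}^{c}(s_{0})$, whence $s\to s_{0}\to -s$ closes the argument. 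The main obstacle throughout is exactly the antipodal ambiguity in Lemma \ref{Lemma_plus_minusS}: projection and lifting alone cannot decide between $s'$ and $-s'$, and the single hypothesis $-s_{0}\in\mathbf{R}^{c}(s_{0})$ is precisely the ingredient needed to resolve it at $s_{0}$ and, through the negation symmetry of chains, propagate the sign flip to every point of the preimage.
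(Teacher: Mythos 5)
Your proposal is correct and follows essentially the same route as the paper: part (i) by projecting chains via the distance-nonincreasing, equivariant map $\mathbb{S}^{n}\rightarrow\mathbb{P}^{n}$, and part (ii) by combining Lemma \ref{Lemma_plus_minusS}, the antipodal symmetry of chains, transitivity of $\mathbf{R}^{c}$ under concatenation, and the hypothesis $-s_{0}\in\mathbf{R}^{c}(s_{0})$ to resolve the sign ambiguity. The only difference is organizational: you isolate the intermediate claim that $-s\in\mathbf{R}^{c}(s)$ for every $s$ in the preimage, whereas the paper runs the same case analysis directly on arbitrary pairs $s,s^{\prime}$.
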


\begin{proof}
Assertion (i) is immediate from the definitions. Concerning assertion (ii) let
$s,s^{\prime}\in\mathbb{S}^{n}$ with $\mathbb{P}s,\mathbb{P}s^{\prime}%
\in\,_{\mathbb{P}}E$. We have to show that $s^{\prime}$ is in $\mathbf{R}%
^{c}(s)$. Lemma \ref{Lemma_plus_minusS} shows that $s^{\prime}\in
\mathbf{R}^{c}(s_{0})$ or $s^{\prime}\in\mathbf{R}^{c}(-s_{0})$, and also that
$s_{0}\in\mathbf{R}^{c}(s)$ or $-s_{0}\in\mathbf{R}^{c}(s)$.

If $s^{\prime}\in\mathbf{R}^{c}(s_{0})$ and $s_{0}\in\mathbf{R}^{c}(s)$ it
follows that $s^{\prime}\in\mathbf{R}^{c}(s_{0})\subset\mathbf{R}^{c}(s)$,
and, analogously, the assertion follows if $s^{\prime}\in\mathbf{R}^{c}%
(-s_{0})$ and $-s_{0}\in\mathbf{R}^{c}(s)$. Suppose that $s^{\prime}%
\in\mathbf{R}^{c}(s_{0})$ and $-s_{0}\in\mathbf{R}^{c}(s)$. Using the
assumption on $s_{0}$ one finds that $s^{\prime}\in\mathbf{R}^{c}%
(s_{0})\subset\mathbf{R}^{c}(-s_{0})\subset\mathbf{R}^{c}(s)$. Analogously,
the assertion follows if $s^{\prime}\in\mathbf{R}^{c}(-s_{0})$ and $s_{0}%
\in\mathbf{R}^{c}(s)$.
\end{proof}

The following theorem is the main result of this section. It describes the
chain control sets in $\mathbb{S}^{n}$.

\begin{theorem}
\label{Theorem_twoS}Consider a bilinear control system on $\mathbb{R}^{n+1}$
of the form (\ref{bilinear}) and let $_{\mathbb{P}}E$ be a chain control set
of the induced control system on $\mathbb{P}^{n}$.

(i) The set $_{\mathbb{S}}E_{0}:=\{s\in\mathbb{S}^{n}\left\vert \mathbb{P}%
s\in\,_{\mathbb{P}}E\right.  \}$ is the unique chain control set in
$\mathbb{S}^{n}$ which projects into $_{\mathbb{P}}E$ if and only if there is
$s_{0}\in\mathbb{S}^{n}$ with $\mathbb{P}s_{0}\in\,_{\mathbb{P}}E$ and
$-s_{0}\in\mathbf{R}^{c}(s_{0})$.

(ii) There are two chain control sets $_{\mathbb{S}}E_{1}=-\,_{\mathbb{S}%
}E_{2}$ with%
\begin{equation}
_{\mathbb{S}}E_{1}\cup\,_{\mathbb{S}}E_{2}=\{s\in\mathbb{S}^{n}\left\vert
\mathbb{P}s\in\,_{\mathbb{P}}E\right.  \}, \label{union}%
\end{equation}
if and only if for all $s_{0}\in\mathbb{S}^{n}$ with $\mathbb{P}s_{0}%
\in\,_{\mathbb{P}}E$ it holds that $-s_{0}\not \in \mathbf{R}^{c}(s_{0})$.

(iii) Define for $j\in\{0,1,2\}$ and $u\in\mathcal{U}$%
\[
V_{j}(u):=\left\{  x\in\mathbb{R}^{n+1}\left\vert \mathbb{S}\varphi
(t,x,u)\in\,_{\mathbb{S}}E_{j}\text{ for all }t\in\mathbb{R}\right.  \right\}
.
\]
In case (i) it follows that $V_{0}(u)$ is a linear subspace. In case (ii) it
follows for $j=1,2$ that $V_{j}(u)$ is a convex cone.

(iv) There are $\ell_{1}$ chain control sets on $\mathbb{S}^{n}$ denoted by
$_{\mathbb{S}}E_{1},\ldots,\allowbreak\,_{\mathbb{S}}E_{\ell_{1}}$ with
$1\leq\ell_{1}\leq2\ell\leq2(n+1)$.
\end{theorem}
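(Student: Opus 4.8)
The plan is to organize everything around the dichotomy furnished by the antipodal map $s\mapsto -s$. Write ${}_{\mathbb{S}}E_{0}:=\{s\in\mathbb{S}^{n}\mid\mathbb{P}s\in{}_{\mathbb{P}}E\}$ and observe that exactly one of the following holds: (a) there is $s_{0}\in\mathbb{S}^{n}$ with $\mathbb{P}s_{0}\in{}_{\mathbb{P}}E$ and $-s_{0}\in\mathbf{R}^{c}(s_{0})$, or (b) $-s_{0}\notin\mathbf{R}^{c}(s_{0})$ for every such $s_{0}$. Since (a) and (b) are negations of each other, it suffices to prove that (a) implies ${}_{\mathbb{S}}E_{0}$ is a single chain control set, the unique one projecting into ${}_{\mathbb{P}}E$, and that (b) implies there are exactly two chain control sets ${}_{\mathbb{S}}E_{1}=-{}_{\mathbb{S}}E_{2}$ whose union is ${}_{\mathbb{S}}E_{0}$. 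The biconditionals in (i) and (ii) then follow from the mutual exclusivity of the two alternatives, using that two distinct chain control sets cannot both be contained in a third (by maximality two distinct ones are disjoint).

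Under (a) I would invoke Lemma \ref{Lemma_sphere1}(ii) to produce a chain control set ${}_{\mathbb{S}}E$ containing ${}_{\mathbb{S}}E_{0}$; Lemma \ref{Lemma_sphere1}(i) shows ${}_{\mathbb{S}}E$ projects into a chain control set of $\mathbb{P}^{n}$, which must be ${}_{\mathbb{P}}E$ since ${}_{\mathbb{S}}E\supseteq{}_{\mathbb{S}}E_{0}\neq\varnothing$; hence ${}_{\mathbb{S}}E\subseteq{}_{\mathbb{S}}E_{0}$ and the two coincide, with uniqueness immediate from maximality. The substantive work, and where I expect the main obstacle, is case (b). Fix $s_{0}$ with $\mathbb{P}s_{0}\in{}_{\mathbb{P}}E$, let ${}_{\mathbb{S}}E_{1}$ be the chain control set containing it (Lemma \ref{Lemma_allS}), so ${}_{\mathbb{S}}E_{1}=\mathbf{R}^{c}(s_{0})\cap\mathbf{C}^{c}(s_{0})$ by Theorem \ref{Theorem_E_omega}, and set ${}_{\mathbb{S}}E_{2}:=-{}_{\mathbb{S}}E_{1}$, which is again a chain control set because $-\zeta$ is a controlled $(\varepsilon,T)$-chain whenever $\zeta$ is. They are distinct, since $-s_{0}\in{}_{\mathbb{S}}E_{1}$ would give $-s_{0}\in\mathbf{R}^{c}(s_{0})$, contradicting (b). The inclusion ${}_{\mathbb{S}}E_{1}\cup{}_{\mathbb{S}}E_{2}\subseteq{}_{\mathbb{S}}E_{0}$ comes from Lemma \ref{Lemma_sphere1}(i).

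For the reverse inclusion take $s'$ with $\mathbb{P}s'\in{}_{\mathbb{P}}E$; Lemma \ref{Lemma_plus_minusS} gives $s'\in\mathbf{R}^{c}(s_{0})$ or $-s'\in\mathbf{R}^{c}(s_{0})$, and after replacing $s'$ by $-s'$ if necessary I may assume $s'\in\mathbf{R}^{c}(s_{0})$. Applying Lemma \ref{Lemma_plus_minusS} to the pair $(s',s_{0})$ yields $s_{0}\in\mathbf{R}^{c}(s')$ or $-s_{0}\in\mathbf{R}^{c}(s')$: in the first case $s'\in\mathbf{R}^{c}(s_{0})\cap\mathbf{C}^{c}(s_{0})={}_{\mathbb{S}}E_{1}$, while the second case gives $s'\in\mathbf{R}^{c}(s_{0})$ and $-s_{0}\in\mathbf{R}^{c}(s')$, whence by transitivity of chain reachability $-s_{0}\in\mathbf{R}^{c}(s_{0})$, contradicting (b). Thus $s'\in{}_{\mathbb{S}}E_{1}\cup{}_{\mathbb{S}}E_{2}$, proving (\ref{union}). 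The delicate point is exactly this interplay of the antipodal symmetry of chains with transitivity, which excludes the sign-crossing alternative.

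For (iii) I would identify ${}_{\mathbb{P}}E$ with one of the Selgrade chain control sets ${}_{\mathbb{P}}E_{i}$ and read off from (\ref{5.3}) that $V_{0}(u)=\{x\mid\mathbb{P}\varphi(t,x,u)\in{}_{\mathbb{P}}E\text{ for all }t\}=\mathcal{V}_{i}(u)$, the fiber of the Selgrade bundle over $u$, which is a linear subspace; this is case (i). In case (ii) every $x\in\mathcal{V}_{i}(u)\setminus\{0\}$ has its whole trajectory in ${}_{\mathbb{S}}E_{0}={}_{\mathbb{S}}E_{1}\sqcup{}_{\mathbb{S}}E_{2}$, and since these two sets are closed and disjoint in the compact space $\mathbb{S}^{n}$, continuity of $t\mapsto\mathbb{S}\varphi(t,x,u)$ forces the trajectory to remain in one of them; hence the unit sphere of $\mathcal{V}_{i}(u)$ is partitioned into the two nonempty, antipodal, relatively closed sets ${}_{\mathbb{S}}E_{j}\cap\mathbb{S}\mathcal{V}_{i}(u)$. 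A connected sphere admits no such partition unless it is $\mathbb{S}^{0}$, so $\mathcal{V}_{i}$ is a line bundle and $V_{j}(u)$ is a closed half-line, in particular a convex cone; the cone property is in any case immediate since $\varphi$ is linear and $\mathbb{S}(\lambda x)=\mathbb{S}(x)$ for $\lambda>0$. Finally, (iv) follows by counting: each of the $\ell$ chain control sets of $\mathbb{P}^{n}$ lifts, by (i) and (ii), to one or two chain control sets of $\mathbb{S}^{n}$, every chain control set of $\mathbb{S}^{n}$ lies over exactly one of them by Lemma \ref{Lemma_sphere1}(i), and $\ell\le n+1$ from the Selgrade decomposition of $\mathbb{R}^{n+1}$; hence the number $\ell_{1}$ of chain control sets on $\mathbb{S}^{n}$ satisfies $1\le\ell\le\ell_{1}\le 2\ell\le 2(n+1)$.
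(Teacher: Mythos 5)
Your proof of parts (i), (ii) and (iv) follows essentially the same route as the paper: Lemma \ref{Lemma_sphere1}(ii) combined with \ref{Lemma_sphere1}(i) for the single--chain-control-set case, and in the opposite case the identification ${}_{\mathbb{S}}E_{1}=\mathbf{R}^{c}(s_{0})\cap\mathbf{C}^{c}(s_{0})$ via Lemma \ref{Lemma_allS} and Theorem \ref{Theorem_E_omega}, with Lemma \ref{Lemma_plus_minusS} and transitivity of $\mathbf{R}^{c}$ excluding the sign-crossing alternative (the paper phrases this through the sets $A^{\pm}$ and uses both $\mathbf{R}^{c}$ and $\mathbf{C}^{c}$ from the lemma, you apply the $\mathbf{R}^{c}$ statement twice to the ordered pairs $(s_{0},s')$ and $(s',s_{0})$ --- these are equivalent). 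The derivation of the two biconditionals from mutual exclusivity of the alternatives is also how the paper closes the argument.

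Where you genuinely diverge is part (iii) in case (ii). The paper proves that $V_{j}(u)$ is a convex cone directly: for $s_{1},s_{2}\in{}_{\mathbb{S}}E_{j}$ it follows the arc $\alpha\mapsto\mathbb{S}(s_{1}+\alpha s_{2})$, which by (\ref{union}) lies in ${}_{\mathbb{S}}E_{1}\cup{}_{\mathbb{S}}E_{2}$, and a supremum argument using the positive Hausdorff distance between the two compact chain control sets shows the arc cannot switch components. You instead apply connectedness to the entire unit sphere $\mathbb{S}\mathcal{V}_{i}(u)$ of the Selgrade fiber: it is covered by the two disjoint closed sets ${}_{\mathbb{S}}E_{1}$ and ${}_{\mathbb{S}}E_{2}$, meets both by antipodal symmetry, and hence must be disconnected, forcing $\dim\mathcal{V}_{i}=1$. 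This is correct (the needed ingredients --- (\ref{union}), closedness and disjointness of distinct chain control sets, linearity of the Selgrade fibers --- are all available) and it yields a strictly stronger conclusion than the paper states: in case (ii) the Selgrade bundle is necessarily a line bundle and each $V_{j}(u)$ is a half-line, not merely a convex cone. This is consistent with the paper (Theorem \ref{Theorem_Cor32}(iv) and Example \ref{Example2}, where the higher-dimensional central bundle indeed corresponds to the single--chain-control-set case), but it is information the paper's segment-based argument does not extract. The one cosmetic point common to both proofs is the treatment of $x=0$ in the definition of $V_{j}(u)$, which is immaterial.
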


\begin{proof}
(i) Suppose that there is $\mathbb{P}s_{0}\in\,_{\mathbb{P}}E$ with $-s_{0}%
\in\mathbf{R}^{c}(s_{0})$. By Lemma \ref{Lemma_sphere1}(ii) there is a chain
control set $_{\mathbb{S}}E$ on the unit sphere containing $\{s\in
\mathbb{S}^{n}\left\vert \mathbb{P}s\in\,_{\mathbb{P}}E\right.  \}$, hence the
projection of $_{\mathbb{S}}E$ to projective space contains $_{\mathbb{P}}E$.
Using Lemma \ref{Lemma_sphere1}(i) one concludes that $_{\mathbb{S}}%
E=\{s\in\mathbb{S}^{n}\left\vert \mathbb{P}s\in\,_{\mathbb{P}}E\right.  \}$.

(ii) Suppose that for all $s_{0}\in\mathbb{S}^{n}$ with $\mathbb{P}s_{0}%
\in\,_{\mathbb{P}}E$ it holds that $-s_{0}\not \in \mathbf{R}^{c}(s_{0})$. Fix
a point $s_{0}\in\mathbb{S}^{n}$ with $\mathbb{P}s_{0}\in\,_{\mathbb{P}}E$ and
define%
\begin{align*}
A^{+}  &  :=\left\{  s\in\mathbb{S}^{n}\left\vert \,\mathbb{P}s\in
\,_{\mathbb{P}}E\text{ and }s\in\mathbf{R}^{c}(s_{0})\cap\mathbf{C}^{c}%
(s_{0})\right.  \right\}  ,\\
A^{-}  &  :=\left\{  s\in\mathbb{S}^{n}\left\vert \,\mathbb{P}s\in
\,_{\mathbb{P}}E\text{ and }s\in\mathbf{R}^{c}(-s_{0})\cap\mathbf{C}%
^{c}(-s_{0})\right.  \right\}  .
\end{align*}
It follows that $A^{-}=-A^{+}$ and the sets $A^{+}$ and $A^{-}$ are disjoint
since $s\in A^{+}\cap A^{-}$ implies $s\in\mathbf{R}^{c}(s_{0})$ and
$s\in\mathbf{C}^{c}(-s_{0})$ which implies that $-s_{0}\in\mathbf{R}^{c}%
(s_{0})$ contradicting the assumption. By Lemma \ref{Lemma_allS} there is a
chain control set $_{\mathbb{S}}E_{1}$ with $s_{0}\in\,_{\mathbb{S}}E_{1}$.
Lemma \ref{Lemma_sphere1}(i)\textbf{ }implies that $\mathbb{P}(_{\mathbb{S}%
}E_{1})\subset\,_{\mathbb{P}}E$ and Theorem \ref{Theorem_E_omega} implies that
$_{\mathbb{S}}E_{1}=\mathbf{R}^{c}(s_{0})\cap\mathbf{C}^{c}(s_{0})$, hence
$A^{+}=\,_{\mathbb{S}}E_{1}$.

Since by our assumption $s_{0}\not \in \mathbf{R}^{c}(-s_{0})$ it follows by
Lemma \ref{Lemma_plus_minusS} that $-s_{0}\in\mathbf{R}^{c}(-s_{0})$, hence
there is a chain control set $_{\mathbb{S}}E_{2}$ with $-s_{0}\in
\,_{\mathbb{S}}E_{2}$. By Theorem \ref{Theorem_E_omega} it follows that
$_{\mathbb{S}}E_{2}=\mathbf{R}^{c}(-s_{0})\cap\mathbf{C}^{c}(-s_{0})$, hence
$A^{-}=\,_{\mathbb{S}}E_{2}$.

Suppose that $\mathbb{P}s\in\,_{\mathbb{P}}E$. Then Lemma
\ref{Lemma_plus_minusS} implies that $s\in\mathbf{R}^{c}(s_{0})$ or
$-s\in\mathbf{R}^{c}(s_{0})$ and $s\in\mathbf{C}^{c}(s_{0})$ or $-s\in
\mathbf{C}^{c}(s_{0})$. If $s\in\mathbf{R}^{c}(s_{0})$ and $-s\in
\mathbf{C}^{c}(s_{0})$ it follows that $s\in\mathbf{C}^{c}(-s_{0})$ which
leads to the contradiction $-s_{0}\in\mathbf{R}^{c}(s_{0})$. Similarly,
$-s\in\mathbf{R}^{c}(s_{0})$ and $s\in\mathbf{C}^{c}(s_{0})$ leads to the
contradiction $-s_{0}\in\mathbf{R}^{c}(s_{0})$. We conclude that
$\mathbb{P}s\in\,_{\mathbb{P}}E$ implies $s\in\mathbf{R}^{c}(s_{0}%
)\cap\mathbf{C}^{c}(s_{0})$ or $s\in\mathbf{R}^{c}(-s_{0})\cap\mathbf{C}%
^{c}(-s_{0})$. This shows that
\begin{equation}
\{s\in\mathbb{S}^{n}\left\vert \mathbb{P}s\in\,_{\mathbb{P}}E\right.
\}=A^{+}\cup A^{-}=\,_{\mathbb{S}}E_{1}\cup\,_{\mathbb{S}}E_{2}.
\label{contained1}%
\end{equation}
Together with the first part of the proof it follows that the conditions in
(i) and (ii) are also necessary for the existence of a single chain control
set $_{\mathbb{S}}E_{0}$ and of two chain control sets $_{\mathbb{S}}E_{1}$
and $_{\mathbb{S}}E_{2}$, respectively.

(iii) In the situation of case (i), one has%
\[
\left\{  x\in\mathbb{R}^{n+1}\left\vert \mathbb{S}x\in\,_{\mathbb{S}}%
E_{0}\right.  \right\}  =\left\{  x\in\mathbb{R}^{n+1}\left\vert
\mathbb{P}x\in\,_{\mathbb{P}}E\right.  \right\}  ,
\]
and the assertion for $_{\mathbb{S}}E_{0}$ follows since by Selgrade's theorem
the fibers of $\mathcal{V}_{i}$ are linear; cf. formula (\ref{5.3}). In the
situation of the case (ii), the assertion follows if for $j=1,2$ and
$u\in\mathcal{U}$
\[
x_{1},x_{2}\in V_{j}(u)\text{ implies }\alpha x_{1}+\beta x_{2}\in
V_{j}(u)\text{ for all }\alpha,\beta>0.
\]
It suffices to prove that $\mathbb{S}(\alpha x_{1}+\beta x_{2})\in
\,_{\mathbb{S}}E_{j}$ since one may replace this point by $\mathbb{S}%
\varphi(t,\alpha x_{1}+\beta x_{2},u),t\in\mathbb{R}$. Note first that
$\mathbb{P}\left(  \alpha x_{1}+\beta x_{2}\right)  \in\,_{\mathbb{P}}E_{j}$
by Selgrade's theorem. Furthermore, $\alpha x_{1}+\beta x_{2}\not =0$ since
$\alpha x_{1}+\beta x_{2}=0$ implies $\mathbb{S}x_{1}=-\mathbb{S}x_{2}%
\in\,_{\mathbb{S}}E_{1}\cap\,_{\mathbb{S}}E_{2}$ contradicting $_{\mathbb{S}%
}E_{1}\not =\,_{\mathbb{S}}E_{2}$. It follows for $t\in\mathbb{R}$ that
$\mathbb{S}\varphi(t,\alpha x_{1}+\beta x_{2},u)\not =0$. Since%
\[
\mathbb{S}(\alpha x_{1}+\beta x_{2})=\frac{\alpha x_{1}+\beta x_{2}%
}{\left\Vert \alpha x_{1}+\beta x_{2}\right\Vert }=\frac{x_{1}+\frac{\beta
}{\alpha}x_{2}}{\left\Vert x_{1}+\frac{\beta}{\alpha}x_{2}\right\Vert },
\]
it suffices to show that for $s_{1},s_{2}\in\mathbb{S}^{n}$ with $s_{1}%
,s_{2}\in\,_{\mathbb{S}}E_{j}$ and $\alpha\in(0,1)$%
\[
\mathbb{S}(s_{1}+\alpha s_{2})=\frac{s_{1}+\alpha s_{2}}{\left\Vert
s_{1}+\alpha s_{2}\right\Vert }\in\,_{\mathbb{S}}E_{j}.
\]
Note that $\mathbb{P}(s_{1}+\alpha s_{2})\in\,_{\mathbb{P}}E_{j}$. The
Hausdorff distance between the compact sets $_{\mathbb{S}}E_{1}$ and
$_{\mathbb{S}}E_{2}$ is positive, hence (\ref{union}) implies $\mathbb{S}%
(s_{1}+\alpha s_{2})\in\,_{\mathbb{S}}E_{j}$ for small $\alpha>0$. Let%
\[
\alpha^{0}:=\sup\left\{  \alpha\in(0,1)\left\vert \mathbb{S}(s_{1}%
+\alpha^{\prime}s_{2})\in\mathbb{\,}_{\mathbb{S}}E_{j}\text{ for all }%
\alpha^{\prime}\in(0,\alpha)\right.  \right\}  .
\]
If $\alpha^{0}<1$ there exists $\alpha\in(\alpha^{0},1)$ with $\mathbb{S}%
(s_{1}+\alpha s_{2})\not \in \,_{\mathbb{S}}E_{1}\cup\,_{\mathbb{S}}E_{2}$.
This contradicts (\ref{union}). Hence it follows that $\mathbb{S}(s_{1}+\alpha
s_{2})\in\,_{\mathbb{S}}E_{j}$ for all $\alpha\in(0,1)$.

(iv) By Selgrade's theorem the number $\ell$ of chain control sets in
$\mathbb{P}^{n}$ satisfies $1\leq\ell\leq n+1$. Hence assertions (i) and (ii)
imply (iv).
\end{proof}

Next, we describe the relation of the chain control sets on the sphere to the
Selgrade bundles $\mathcal{V}_{i}$; cf. (\ref{5.3}).

\begin{corollary}
\label{Corollary_twoS}For a bilinear system of the form (\ref{bilinear}) the
projection to $\mathbb{S}^{n}$ of the Selgrade bundle $\mathcal{V}_{i}$ given
by
\[
\pi_{\mathbb{S}}\mathcal{V}_{i}:=\left\{  s\in\mathbb{S}^{n}\left\vert
\exists(u,x)\in\mathcal{V}_{i}\text{ with }s=\frac{x}{\left\Vert x\right\Vert
}\right.  \right\}  ,
\]
equals $_{\mathbb{S}}E_{0}$ or $_{\mathbb{S}}E_{1}\cup\,_{\mathbb{S}}E_{2}$,
where $_{\mathbb{S}}E_{j}$ are the chain control sets from Theorem
\ref{Theorem_twoS}(i) or (ii), respectively.
\end{corollary}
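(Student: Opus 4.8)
The plan is to reduce the corollary to the single set identity
\[
\pi_{\mathbb{S}}\mathcal{V}_i = \{s \in \mathbb{S}^n \mid \mathbb{P}s \in \,_{\mathbb{P}}E_i\},
\]
where $\,_{\mathbb{P}}E_i$ is the chain control set in $\mathbb{P}^n$ attached to $\mathcal{V}_i$ through (\ref{5.2}). Once this is established, the corollary follows at once by applying Theorem \ref{Theorem_twoS} with $\,_{\mathbb{P}}E = \,_{\mathbb{P}}E_i$: the right-hand side above is exactly the set $\{s \in \mathbb{S}^n \mid \mathbb{P}s \in \,_{\mathbb{P}}E\}$, which that theorem identifies with the single chain control set $\,_{\mathbb{S}}E_0$ in case (i) and with the union $\,_{\mathbb{S}}E_1 \cup \,_{\mathbb{S}}E_2$ in case (ii). These are precisely the two alternatives in the statement.

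For the inclusion $\pi_{\mathbb{S}}\mathcal{V}_i \subseteq \{s \mid \mathbb{P}s \in \,_{\mathbb{P}}E_i\}$ I would take $s = x/\|x\|$ with $(u,x) \in \mathcal{V}_i$ and $x \neq 0$, and evaluate the defining relation (\ref{5.3}) at $t = 0$; this gives $\mathbb{P}x \in \,_{\mathbb{P}}E_i$, and since $\mathbb{P}x = \mathbb{P}s$ we get $\mathbb{P}s \in \,_{\mathbb{P}}E_i$. For the reverse inclusion I would fix $s \in \mathbb{S}^n$ with $\mathbb{P}s \in \,_{\mathbb{P}}E_i$. By (\ref{5.2}) there is $u \in \mathcal{U}$ with $(u, \mathbb{P}s) \in \mathbb{P}\mathcal{V}_i$, so the fiber of $\mathcal{V}_i$ over $u$ contains some nonzero scalar multiple of $s$. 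The decisive point is that by Selgrade's theorem this fiber is a \emph{linear} subspace; hence it contains $s$ itself, so $(u,s) \in \mathcal{V}_i$, and since $\|s\| = 1$ we conclude $s = s/\|s\| \in \pi_{\mathbb{S}}\mathcal{V}_i$.

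I do not expect a genuine obstacle: the argument is immediate once the linearity of the Selgrade fibers is invoked. That linearity is in fact the structural heart of the statement, since it forces $\pi_{\mathbb{S}}\mathcal{V}_i$ to contain \emph{both} antipodal representatives of each projective point of $\,_{\mathbb{P}}E_i$, so that it is the full $\pm$-symmetric preimage rather than one antipodal half. This symmetry is exactly what matches $\pi_{\mathbb{S}}\mathcal{V}_i$ with the $\pm$-symmetric set treated in Theorem \ref{Theorem_twoS}, explaining why the same identity produces $\,_{\mathbb{S}}E_0$ in case (i) and the symmetric union $\,_{\mathbb{S}}E_1 \cup \,_{\mathbb{S}}E_2$ in case (ii). The only care needed is bookkeeping: that $\pi_{\mathbb{S}}$ records both signs, and that the $\,_{\mathbb{P}}E$ of Theorem \ref{Theorem_twoS} must be taken as the specific $\,_{\mathbb{P}}E_i$ belonging to $\mathcal{V}_i$.
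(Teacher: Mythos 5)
Your proposal is correct and follows essentially the same route as the paper: identify $\pi_{\mathbb{S}}\mathcal{V}_{i}$ with the full preimage $\{s\in\mathbb{S}^{n}\mid\mathbb{P}s\in\,_{\mathbb{P}}E_{i}\}$ and then invoke Theorem \ref{Theorem_twoS}. The paper states this identification in one line, whereas you supply the justification (evaluation of (\ref{5.3}) at $t=0$ for one inclusion, and (\ref{5.2}) together with linearity of the Selgrade fibers for the $\pm$-symmetry in the other), which is a welcome elaboration rather than a different argument.
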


\begin{proof}
The set $\pi_{\mathbb{S}}\mathcal{V}_{i}$ projects onto $_{\mathbb{P}}%
E_{i}\subset\mathbb{P}^{n}$, hence $\pi_{\mathbb{S}}\mathcal{V}_{c}%
^{1}=\left\{  s\in\mathbb{S}^{n}\left\vert \mathbb{P}s\in\,_{\mathbb{P}}%
E_{i}\right.  \right\}  $ and the assertion follows from Theorem
\ref{Theorem_twoS}.
\end{proof}

In the rest of this section, we consider the control system induced by an
affine control system on the Poincar\'{e} sphere.

\begin{corollary}
\label{Corollary_three}Consider an affine control system of the form
(\ref{affine1}) and the induced control systems on $\mathbb{P}^{n}$ and
$\mathbb{S}^{n}$. Let $_{\mathbb{P}}E_{c}^{1}$ be the central chain control
set on $\mathbb{P}^{n}$. Then the system on $\mathbb{S}^{n}$ satisfies
\[
\left\{  s\in\mathbb{S}^{n}\left\vert \mathbb{P}s\in E_{c}^{1}\right.
\right\}  =\,_{\mathbb{S}}E_{c,0}\text{ or }\left\{  s\in\mathbb{S}%
^{n}\left\vert \mathbb{P}s\in E_{c}^{1}\right.  \right\}  =\,_{\mathbb{S}%
}E_{c,1}\cup\,_{\mathbb{S}}E_{c,2},
\]
where $_{\mathbb{S}}E_{c,0}$ is a chain control set with $_{\mathbb{S}}%
E_{c,0}=-\,_{\mathbb{S}}E_{c,0}$ and $_{\mathbb{S}}E_{c,2}=-\,_{\mathbb{S}%
}E_{c,1}$ are two chain control sets, respectively.
\end{corollary}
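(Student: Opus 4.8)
The plan is to recognize that this corollary is nothing more than the specialization of Theorem \ref{Theorem_twoS} to the particular chain control set on $\mathbb{P}^{n}$ furnished by the central chain control set, so the proof should be a short bookkeeping argument rather than a new analytic estimate.

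First I would observe that the affine control system (\ref{affine1}) is embedded, via (\ref{hom_d+1}), into a bilinear control system on $\mathbb{R}^{n+1}$ that has exactly the form (\ref{bilinear}) required in Theorem \ref{Theorem_twoS}. By Theorem \ref{Theorem_E}(i) the central chain control set $_{\mathbb{P}}E_{c}$ is a well-defined chain control set of the induced system on $\mathbb{P}^{n}$. Hence I may apply Theorem \ref{Theorem_twoS} with the choice $_{\mathbb{P}}E:=\,_{\mathbb{P}}E_{c}^{1}$. The theorem then yields precisely the announced dichotomy: either (case (i)) there is $s_{0}\in\mathbb{S}^{n}$ with $\mathbb{P}s_{0}\in\,_{\mathbb{P}}E_{c}^{1}$ and $-s_{0}\in\mathbf{R}^{c}(s_{0})$, in which case $_{\mathbb{S}}E_{c,0}:=\{s\in\mathbb{S}^{n}\mid\mathbb{P}s\in\,_{\mathbb{P}}E_{c}^{1}\}$ is the unique chain control set projecting into $_{\mathbb{P}}E_{c}^{1}$; or (case (ii)) $-s_{0}\notin\mathbf{R}^{c}(s_{0})$ for all such $s_{0}$, in which case there are exactly two chain control sets $_{\mathbb{S}}E_{c,1}=-\,_{\mathbb{S}}E_{c,2}$ whose union is $\{s\in\mathbb{S}^{n}\mid\mathbb{P}s\in\,_{\mathbb{P}}E_{c}^{1}\}$.

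It then remains only to check the two symmetry assertions. In case (ii) the relation $_{\mathbb{S}}E_{c,2}=-\,_{\mathbb{S}}E_{c,1}$ is already part of the conclusion of Theorem \ref{Theorem_twoS}(ii). In case (i) the symmetry $_{\mathbb{S}}E_{c,0}=-\,_{\mathbb{S}}E_{c,0}$ is immediate: the full preimage $\{s\in\mathbb{S}^{n}\mid\mathbb{P}s\in\,_{\mathbb{P}}E_{c}^{1}\}$ of any subset of $\mathbb{P}^{n}$ under the two-to-one projection $\mathbb{S}^{n}\to\mathbb{P}^{n}$ is automatically invariant under the antipodal map, because $\mathbb{P}s=\mathbb{P}(-s)$, and in case (i) this whole preimage coincides with $_{\mathbb{S}}E_{c,0}$.

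I do not expect a genuine obstacle here; the only point requiring a word of care is the bookkeeping that identifies $_{\mathbb{P}}E_{c}^{1}$ as a legitimate chain control set on $\mathbb{P}^{n}$ to which Theorem \ref{Theorem_twoS} may be applied, which is exactly what Theorem \ref{Theorem_E}(i) guarantees, together with the verification that (\ref{hom_d+1}) is of the form (\ref{bilinear}). Beyond that the statement follows verbatim from the $j\in\{0,1,2\}$ alternatives of Theorem \ref{Theorem_twoS}, so the corollary is a direct consequence.
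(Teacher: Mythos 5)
Your proposal is correct and follows exactly the paper's route: the published proof is the one-line remark that the corollary is a consequence of Theorem \ref{Theorem_twoS}(i),(ii), applied to the central chain control set $_{\mathbb{P}}E_{c}$ of the lifted bilinear system (\ref{hom_d+1}). Your additional bookkeeping --- invoking Theorem \ref{Theorem_E}(i) to justify that $_{\mathbb{P}}E_{c}$ is a chain control set on $\mathbb{P}^{n}$ and noting that the antipodal invariance of $_{\mathbb{S}}E_{c,0}$ comes from its being a full preimage under the two-to-one projection --- only makes explicit what the paper leaves implicit.
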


\begin{proof}
This is a consequence of Theorem \ref{Theorem_twoS}(i),(ii).
\end{proof}

The chain control sets $_{\mathbb{S}}E_{c,j},j\in\{0,1,2\}$, are the only
chain control sets on $\mathbb{S}^{n}$, which are not contained in the equator
$\mathbb{S}^{n,0}$. We call them the central chain control sets on
$\mathbb{S}^{n}$.

The following proposition shows that the intersection of a chain control set
$_{\mathbb{S}}E_{c,j}$ in $\mathbb{S}^{n}$ with the equator $\mathbb{S}^{n,0}$
contains a chain control set in $\mathbb{S}^{n,0}$. This result and its proof
are similar to Colonius, Santana, and Setti \cite[Lemma 6.8]{ColSS23}, where
an analogous result is given on projective spaces. Theorem \ref{Theorem_twoS}
also applies to the bilinear control system on $\mathbb{R}^{n}$ given by the
homogeneous part (\ref{bilinear0}). For the induced system on the sphere
$\mathbb{S}^{n-1}$ this yields chain control sets $_{\mathbb{S}}E^{\hom}$ in
$\mathbb{S}^{n-1}$. We identify $\mathbb{S}^{n-1}\times\{0\}$ with $\left\{
s\in\mathbb{S}^{n}\left\vert s_{n+1}=0\right.  \right\}  $, hence the chain
control sets contained in the equator $\mathbb{S}^{n,0}$ have the form
$_{\mathbb{S}}E^{\hom}\times\{0\}$.

\begin{proposition}
\label{Proposition5.7}Consider an affine control system of the form
(\ref{affine1}) on $\mathbb{R}^{n}$, and let $_{\mathbb{S}}E_{c,j}%
,j\in\{0,1,2\}$, be the central chain control sets in the Poincar\'{e} sphere
$\mathbb{S}^{n}$.

(i) If $_{\mathbb{S}}E_{c,j}\cap\left(  _{\mathbb{S}}E^{\hom}\times
\{0\}\right)  \not =\varnothing$ for a chain control set $_{\mathbb{S}}%
E^{\hom}\subset\mathbb{S}^{n-1}$ of the projectivized homogeneous part, then
$_{\mathbb{S}}E^{\hom}\times\{0\}\subset\,_{\mathbb{S}}E_{c,j}$.

(ii) If $_{\mathbb{S}}E_{c,j}\cap\mathbb{S}^{n,0}\not =\varnothing$ it
contains $_{\mathbb{S}}E^{\hom}\times\{0\}$ for a chain control set
$_{\mathbb{S}}E^{\hom}$ in $\mathbb{S}^{n-1}$ of the homogeneous part
(\ref{bilinear0}).

(iii) The chain control sets $_{\mathbb{S}}E_{c,j}$ and $_{\mathbb{S}}E^{\hom
}$ are also strong chain control sets.
\end{proposition}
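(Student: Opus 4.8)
The plan is to dispatch (iii) at once, prove (i) by an elementary maximality argument, and then reduce (ii) to (i) by producing a single point lying in both ${}_{\mathbb{S}}E_{c,j}$ and some homogeneous chain control set. For (iii), note that the state spaces $\mathbb{S}^{n}$ and $\mathbb{S}^{n-1}$ carrying the systems (\ref{system_S}) and the one induced by (\ref{bilinear0}) are compact, and ${}_{\mathbb{S}}E_{c,j}$ and ${}_{\mathbb{S}}E^{\hom}$ are chain control sets of these induced systems. Hence Proposition \ref{Proposition_compact}(iv) immediately identifies them with strong chain control sets; equivalently, being closed subsets of compact spaces they are compact, so Proposition \ref{Proposition_compact}(iii) applies. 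In particular ${}_{\mathbb{S}}E_{c,j}$ is closed (Proposition \ref{Proposition_compact}(i)), a fact I will reuse in (ii).

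For (i), the first thing to record is that the equator $\mathbb{S}^{n,0}$ is invariant for the system (\ref{system_S}): it is the projection of the hyperplane $\{z=0\}\subset\mathbb{R}^{n+1}$, which is invariant for (\ref{hom_d+1}) because the block structure gives $\dot z=0$. On $\mathbb{S}^{n,0}$, under the identification with $\mathbb{S}^{n-1}\times\{0\}$, the restricted dynamics coincides with the homogeneous system (\ref{bilinear0}) induced on $\mathbb{S}^{n-1}$, and the metric of $\mathbb{S}^{n}$ restricts to a compatible metric on the equator (a routine check). Consequently every controlled $(\varepsilon,T)$-chain of the homogeneous system on $\mathbb{S}^{n-1}$ is, verbatim, a controlled $(\varepsilon,T)$-chain of the system on $\mathbb{S}^{n}$ lying entirely in the equator. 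Now fix $(\sigma_{0},0)\in{}_{\mathbb{S}}E_{c,j}\cap(\,{}_{\mathbb{S}}E^{\hom}\times\{0\})$. For any $\sigma\in{}_{\mathbb{S}}E^{\hom}$ and any $\varepsilon,T>0$, chain controllability inside ${}_{\mathbb{S}}E^{\hom}$ furnishes chains from $\sigma_{0}$ to $\sigma$ and back; lifting these to the equator and concatenating them with chains running through $(\sigma_{0},0)$ inside ${}_{\mathbb{S}}E_{c,j}$ shows that $(\sigma,0)$ is mutually chain controllable with every point of ${}_{\mathbb{S}}E_{c,j}$. Maximality of the chain control set ${}_{\mathbb{S}}E_{c,j}$ (Definition \ref{Definition_chain_control}) then forces $(\sigma,0)\in{}_{\mathbb{S}}E_{c,j}$, and since $\sigma$ was arbitrary, ${}_{\mathbb{S}}E^{\hom}\times\{0\}\subset{}_{\mathbb{S}}E_{c,j}$.

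For (ii), start from $(\sigma_{0},0)\in{}_{\mathbb{S}}E_{c,j}\cap\mathbb{S}^{n,0}$. By Theorem \ref{Theorem_E_omega} there is $u\in\mathcal{U}$ whose trajectory through $(\sigma_{0},0)$ remains in ${}_{\mathbb{S}}E_{c,j}$ for all times; by invariance of the equator this trajectory stays on $\mathbb{S}^{n,0}$, so it is a complete trajectory $\gamma(t)$ of the homogeneous system on $\mathbb{S}^{n-1}$ with $(\gamma(t),0)\in{}_{\mathbb{S}}E_{c,j}$ for all $t$. Lift it to the orbit $(u(t+\cdot),\gamma(t))$ of the homogeneous control flow on the compact space $\mathcal{U}\times\mathbb{S}^{n-1}$ and pass to its nonempty, compact, chain transitive $\omega$-limit set. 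Since the maximal chain transitive sets of this control flow are precisely the lifts of the chain control sets ${}_{\mathbb{S}}E^{\hom}$ (the relation (\ref{chain_transitive1}) adapted to the sphere, together with Theorem \ref{Theorem_twoS} applied to the homogeneous part), the $\omega$-limit set lies in the lift of a single such ${}_{\mathbb{S}}E^{\hom}$. Projecting, any accumulation point $\sigma^{\ast}$ satisfies $\sigma^{\ast}\in{}_{\mathbb{S}}E^{\hom}$, while along the defining sequence $t_{k}\to\infty$ one has $\gamma(t_{k})\to\sigma^{\ast}$ with $(\gamma(t_{k}),0)\in{}_{\mathbb{S}}E_{c,j}$; closedness of ${}_{\mathbb{S}}E_{c,j}$ then gives $(\sigma^{\ast},0)\in{}_{\mathbb{S}}E_{c,j}$. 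Thus ${}_{\mathbb{S}}E_{c,j}\cap(\,{}_{\mathbb{S}}E^{\hom}\times\{0\})\neq\varnothing$, and part (i) yields ${}_{\mathbb{S}}E^{\hom}\times\{0\}\subset{}_{\mathbb{S}}E_{c,j}$.

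I expect the $\omega$-limit step in (ii) to be the main obstacle: one must know that every $\omega$-limit set of the homogeneous control flow on $\mathcal{U}\times\mathbb{S}^{n-1}$ is contained in a single maximal chain transitive set, and that these maximal chain transitive sets descend exactly to the chain control sets ${}_{\mathbb{S}}E^{\hom}$ on $\mathbb{S}^{n-1}$. This rests on the Selgrade/Conley structure (the finest Morse decomposition on the projective bundle and its transfer to the sphere via Theorem \ref{Theorem_twoS}), combined with the standard facts that $\omega$-limit sets are chain transitive and that chain control sets are closed. The remaining ingredients—invariance of the equator, the agreement of the equatorial dynamics with the homogeneous system (\ref{bilinear0}), and metric compatibility on the equator—are routine consequences of the block form of (\ref{hom_d+1}).
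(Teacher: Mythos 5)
Your proposal is correct and follows essentially the same route as the paper: part (i) via concatenation of chains through a common point in the intersection plus maximality, part (ii) via Theorem \ref{Theorem_E_omega}, invariance of the equator, and the chain transitivity of the resulting $\omega$-limit set (which the paper outsources to \cite[Corollary 4.3.12]{ColK00}), and part (iii) via Proposition \ref{Proposition_compact}(iv). The only cosmetic difference is that you cite Theorem \ref{Theorem_twoS} where the identification of maximal chain transitive sets with chain control sets already follows from (\ref{chain_transitive1}) applied to the induced system on $\mathbb{S}^{n-1}$.
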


\begin{proof}
(i) We will show that the set $_{\mathbb{S}}E_{c,j}\cup\left(  _{\mathbb{S}%
}E^{\hom}\times\{0\}\right)  $ is chain controllable. Then the maximality
property of the chain control set $_{\mathbb{S}}E_{c,j}$ implies that
$_{\mathbb{S}}E^{\hom}\times\{0\}\subset\,_{\mathbb{S}}E_{c,j}$.

Consider $x\in\,_{\mathbb{S}}E_{c,j}$ and $y\in\,_{\mathbb{S}}E^{\hom}%
\times\{0\}$ and let $\varepsilon,T>0$. Fix $z\in\,_{\mathbb{S}}E_{c,j}%
\cap\left(  _{\mathbb{S}}E^{\hom}\times\{0\}\right)  $. There are controlled
$(\varepsilon,T)$-chains $\zeta_{1}$ and $\zeta_{2}$ from $x$ to $z$ and from
$z$ to $x$, resp. For the system restricted to $\mathbb{S}^{n,0}$, there exist
controlled $(\varepsilon,T)$-chains $\zeta_{3}$ and $\zeta_{4}$ from $z$ to
$y$ and from $y$ to $z$, respectively. Then the concatenations $\zeta_{3}%
\circ\zeta_{1}$ and $\zeta_{2}\circ\zeta_{4}$ are controlled $(\varepsilon
,T)$-chains from $x$ to $y$ and from $y$ to $x$, resp. This concludes the
proof of assertion (i) since $\varepsilon,T>0$ are arbitrary.

(ii) Let $(x,0)\in\,_{\mathbb{S}}E_{c,j}\cap\mathbb{S}^{n,0}$. Then there
exists a control $u\in\mathcal{U}$ with $\mathbb{S}\psi^{1}%
(t,x,0,u)\allowbreak\in\,_{\mathbb{S}}E_{c,j}\cap\mathbb{S}^{n,0}%
,t\in\mathbb{R}$, by Theorem \ref{Theorem_E_omega} and invariance of
$\mathbb{S}^{n,0}$. Since $_{\mathbb{S}}E_{c,j}\cap\mathbb{S}^{n,0}$ is
compact, it follows that the $\omega$-limit set%
\[
\left\{  \left.  y=\lim_{k\rightarrow\infty}\mathbb{S}\psi^{1}(t_{k}%
,x,0,u)\right\vert t_{k}\rightarrow\infty\right\}  \subset\,_{\mathbb{S}%
}E_{c,j}\cap\mathbb{S}^{n,0}%
\]
is nonvoid. Hence Colonius and Kliemann \cite[Corollary 4.3.12]{ColK00}
implies that there exists a chain control set of the system restricted to
$\mathbb{S}^{n,0}$ containing the $\omega$-limit set. Thus there is a chain
control set $_{\mathbb{S}}E^{\hom}$ in $\mathbb{S}^{n-1}$ of the homogeneous
part with $_{\mathbb{S}}E_{c,j}\cap\left(  _{\mathbb{S}}E^{\hom}%
\times\{0\}\right)  \not =\varnothing$. Now the assertion follows from (i).

(iii) Since $\mathbb{S}^{n}$ and $\mathbb{S}^{n-1}$ are compact, Proposition
\ref{Proposition_compact}(iv) implies that $_{\mathbb{S}}E$ and $_{\mathbb{S}%
}E^{\hom}$ are also strong chain control sets.
\end{proof}

The following theorem describes the intersection of a chain control set on the
Poincar\'{e} sphere $\mathbb{S}^{n}$ with the equator $\mathbb{S}^{n,0}$.

\begin{theorem}
Consider an affine control system of the form (\ref{affine1}) on
$\mathbb{R}^{n}$, and suppose that the central chain control set
$_{\mathbb{P}}E_{c}$ on the projective Poincar\'{e} sphere $\mathbb{P}^{n}$
has a nonvoid intersection $_{\mathbb{P}}E_{c}\cap\left(  _{\mathbb{P}}%
E^{\hom}\times\{0\}\right)  \,$ with a chain control set $_{\mathbb{P}}%
E^{\hom}$ in $\mathbb{P}^{n-1}$for the homogeneous part (\ref{bilinear0}).
Denote by $_{\mathbb{S}}E_{j}^{\hom}\subset\mathbb{S}^{n-1},j=0$ or $j=1,2$,
the chain control sets mapped onto $_{\mathbb{P}}E^{\hom}$.

(i) Consider the case where the central chain control set $_{\mathbb{S}%
}E_{c,0}\ $on $\mathbb{S}^{n}$ exists. Then for the chain control sets
$_{\mathbb{S}}E_{j}^{\hom},j=0$ or $j=1,2$, one has $_{\mathbb{S}}E_{j}^{\hom
}\times\{0\}\subset\,_{\mathbb{S}}E_{c,0}\cap\mathbb{S}^{n,0}$.

(ii) Consider the case where the central chain control sets $_{\mathbb{S}%
}E_{c,1}$ and $_{\mathbb{S}}E_{c,2}\ $exist. Then the chain control sets
$_{\mathbb{S}}E_{1}^{\hom}$ and $\,_{\mathbb{S}}E_{2}^{\hom}$ exist, and for
$j=1,2$ exactly one of the sets $_{\mathbb{S}}E_{1}^{\hom}\times\{0\}$ and
$_{\mathbb{S}}E_{2}^{\hom}\times\{0\}$ is contained in $_{\mathbb{S}}%
E_{c,j}\cap\mathbb{S}^{n,0}$. The case where $_{\mathbb{S}}E_{0}^{\hom}$
exists cannot occur.
\end{theorem}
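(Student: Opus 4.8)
The plan is to apply the dichotomy of Theorem \ref{Theorem_twoS} twice --- once to the system on $\mathbb{S}^n$, which produces the central sets ${}_{\mathbb{S}}E_{c,j}$, and once to the homogeneous part on $\mathbb{S}^{n-1}$, which produces the sets ${}_{\mathbb{S}}E_j^{\hom}$ --- and to transfer inclusions across the equator. The link is the invariance of $\mathbb{S}^{n,0}$ together with the identification of $\mathbb{S}^{n-1}\times\{0\}$ with $\mathbb{S}^{n,0}$, which lets me read a controlled $(\varepsilon,T)$-chain of the homogeneous system on $\mathbb{S}^{n-1}$ as a controlled $(\varepsilon,T)$-chain of the full system on $\mathbb{S}^n$. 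The decisive structural fact, from Theorem \ref{Theorem_twoS} and Corollary \ref{Corollary_three}, is that the existence of a single central set ${}_{\mathbb{S}}E_{c,0}$ forces $-s_0\in\mathbf{R}^c(s_0)$ for every $s_0$ with $\mathbb{P}s_0\in{}_{\mathbb{P}}E_c$, whereas the existence of the antipodal pair ${}_{\mathbb{S}}E_{c,1},{}_{\mathbb{S}}E_{c,2}$ forces $-s_0\notin\mathbf{R}^c(s_0)$ for all such $s_0$. In both cases I start from the hypothesis that ${}_{\mathbb{P}}E_c\cap({}_{\mathbb{P}}E^{\hom}\times\{0\})$ is nonvoid, pick $\mathbb{P}s$ in this intersection, and lift it to a point $s=(s',0)$ on the equator with $\mathbb{P}s\in{}_{\mathbb{P}}E_c$ and $s'\in\mathbb{S}^{n-1}$ lying in one of the homogeneous chain control sets (using Lemma \ref{Lemma_allS} and Lemma \ref{Lemma_sphere1}(i)).

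For assertion (i), Theorem \ref{Theorem_twoS}(i) gives ${}_{\mathbb{S}}E_{c,0}=\{s\in\mathbb{S}^n\mid\mathbb{P}s\in{}_{\mathbb{P}}E_c\}$, so $s\in{}_{\mathbb{S}}E_{c,0}$ and the intersection ${}_{\mathbb{S}}E_{c,0}\cap({}_{\mathbb{S}}E_k^{\hom}\times\{0\})$ is nonvoid for the index $k$ with $s'\in{}_{\mathbb{S}}E_k^{\hom}$. Proposition \ref{Proposition5.7}(i) then yields ${}_{\mathbb{S}}E_k^{\hom}\times\{0\}\subset{}_{\mathbb{S}}E_{c,0}$. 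If the homogeneous part is in its single-set case this is already the full assertion; if it is in its two-set case with ${}_{\mathbb{S}}E_1^{\hom}=-{}_{\mathbb{S}}E_2^{\hom}$, I apply the symmetry ${}_{\mathbb{S}}E_{c,0}=-{}_{\mathbb{S}}E_{c,0}$ to the inclusion for $k=1$ to obtain ${}_{\mathbb{S}}E_2^{\hom}\times\{0\}\subset{}_{\mathbb{S}}E_{c,0}$ as well.

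For assertion (ii) I first rule out the single-set homogeneous case. If ${}_{\mathbb{S}}E_0^{\hom}=-{}_{\mathbb{S}}E_0^{\hom}$ existed, both $s'$ and $-s'$ would lie in it, and chain transitivity on $\mathbb{S}^{n-1}$ together with the invariance of the equator would produce controlled chains from $s$ to $-s$ inside $\mathbb{S}^{n,0}$, giving $-s\in\mathbf{R}^c(s)$ for a point with $\mathbb{P}s\in{}_{\mathbb{P}}E_c$ and contradicting the defining property of case (ii). Hence ${}_{\mathbb{S}}E_1^{\hom}$ and ${}_{\mathbb{S}}E_2^{\hom}=-{}_{\mathbb{S}}E_1^{\hom}$ exist. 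Relabeling so that $s'\in{}_{\mathbb{S}}E_1^{\hom}$ and $s\in{}_{\mathbb{S}}E_{c,1}$, Proposition \ref{Proposition5.7}(i) gives ${}_{\mathbb{S}}E_1^{\hom}\times\{0\}\subset{}_{\mathbb{S}}E_{c,1}$, and the antipodal relation ${}_{\mathbb{S}}E_{c,2}=-{}_{\mathbb{S}}E_{c,1}$ gives ${}_{\mathbb{S}}E_2^{\hom}\times\{0\}\subset{}_{\mathbb{S}}E_{c,2}$. For the \emph{exactly one} clause I argue by contradiction: if ${}_{\mathbb{S}}E_2^{\hom}\times\{0\}$ were also contained in ${}_{\mathbb{S}}E_{c,1}$, then for $s'\in{}_{\mathbb{S}}E_1^{\hom}$ the antipodal points $(s',0)$ and $(-s',0)$ would both lie in ${}_{\mathbb{S}}E_{c,1}$, and chain transitivity of this chain control set would force $-s\in\mathbf{R}^c(s)$, once more contradicting case (ii); the symmetric argument handles ${}_{\mathbb{S}}E_{c,2}$.

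The step I expect to be the main obstacle is the careful identification of chains on the equator with chains in the ambient sphere: I must check that the invariance of $\mathbb{S}^{n,0}$ and the compatibility of the induced metric with the restriction of the metric on $\mathbb{S}^n$ genuinely convert an $(\varepsilon,T)$-chain of the homogeneous system on $\mathbb{S}^{n-1}$ into one of the full system on $\mathbb{S}^n$. Once this is secured, both the exclusion of ${}_{\mathbb{S}}E_0^{\hom}$ and the \emph{exactly one} dichotomy reduce to the single antipodal obstruction $-s\notin\mathbf{R}^c(s)$ supplied by case (ii).
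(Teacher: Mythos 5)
Your proposal is correct and follows essentially the same route as the paper: the dichotomy of Theorem \ref{Theorem_twoS} applied both to the Poincar\'{e} sphere and to the homogeneous part, combined with Proposition \ref{Proposition5.7}(i) and the antipodal obstruction $-s_{0}\notin\mathbf{R}^{c}(s_{0})$ that characterizes the two-set case. The only local difference is that you exclude $_{\mathbb{S}}E_{0}^{\hom}$ in part (ii) by constructing a chain from $s$ to $-s$ along the invariant equator, whereas the paper observes that $_{\mathbb{S}}E_{0}^{\hom}\times\{0\}$ would have to lie in the intersection of the two disjoint central chain control sets; both arguments are sound.
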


\begin{proof}
(i) The chain control set $_{\mathbb{S}}E_{c,0}\ $is the preimage of
$_{\mathbb{P}}E_{c}\ $and either $_{\mathbb{S}}E_{j}^{\hom}$ or the union
$_{\mathbb{S}}E_{1}^{\hom}\cup\,_{\mathbb{S}}E_{2}^{\hom}$ forms the preimage
of $_{\mathbb{P}}E^{\hom}$. Since $\mathbb{S}^{n,0}$ is the preimage of
$\mathbb{P}^{n,0}$ it follows that $_{\mathbb{S}}E_{j}^{\hom}\times
\{0\}\subset\,_{\mathbb{S}}E_{c,0}\cap\mathbb{S}^{n,0}$ for $j=0,1,2$.

(ii) The same argument shows that $_{\mathbb{S}}E_{c,j}\cap\mathbb{S}%
^{n,0}\not =\varnothing,j=1,2$. Suppose that $_{\mathbb{S}}E_{1}^{\hom
}=-\,_{\mathbb{S}}E_{2}^{\hom}$ exist. Then it holds that
\[
\left(  _{\mathbb{S}}E_{1}^{\hom}\times\{0\}\right)  \cup\left(
-\,_{\mathbb{S}}E_{2}^{\hom}\times\{0\}\right)  =\,_{\mathbb{S}}E_{c,1}%
\cup\,_{\mathbb{S}}E_{c,2}.
\]
If $_{\mathbb{S}}E_{1}^{\hom}\cap\,_{\mathbb{S}}E_{c,1}\not =\varnothing$ it
follows by Proposition \ref{Proposition5.7}(i) that $_{\mathbb{S}}E_{1}^{\hom
}\subset\,_{\mathbb{S}}E_{c,1}$. Then the set $_{\mathbb{S}}E_{2}^{\hom}%
\times\{0\}=-\left(  _{\mathbb{S}}E_{1}^{\hom}\times\{0\}\right)  $ cannot
intersect $_{\mathbb{S}}E_{c,1}$ since this would lead to an element $s_{0}$
with $\mathbb{P}s_{0}\in\,_{\mathbb{P}}E_{c,1}$ and $-s_{0}\in\,_{\mathbb{S}%
}E_{c,1}\subset\mathbf{R}^{c}(s_{0})$ contradicting Theorem \ref{Theorem_twoS}%
(ii). The same argument applies to $_{\mathbb{S}}E_{c,2}$. If $_{\mathbb{S}%
}E_{0}^{\hom}$ exists, then the chain control set $_{\mathbb{S}}E_{0}^{\hom
}\times\{0\}$ would be contained in the intersection $_{\mathbb{S}}E_{c,1}%
\cap\,_{\mathbb{S}}E_{c,2}$, which is a contradiction.
\end{proof}

\section{Chain control sets on hemispheres\label{Section6}}

By Corollary \ref{Corollary_relation}\ the strong chain control sets on the
northern hemisphere $\mathbb{S}^{n,+}$ of the Poincar\'{e} sphere correspond
to the strong chain control sets $E^{\ast}$ of the affine system
(\ref{affine1}) and are given by $_{\mathbb{S}}E^{\ast}=h(E^{\ast})$. By
Proposition \ref{Proposition_compact}(ii) every strong chain control set is
contained in a chain control set. This implies that the strong chain control
sets $_{\mathbb{S}}E^{\ast}$ are contained in chain control sets in
$\mathbb{S}^{n,+}$ and in chain control sets of the system on the closure
$\overline{\mathbb{S}^{n,+}}$. These chain control sets are contained in one
of the central chain control set $_{\mathbb{S}}E_{c,j},j\in\{0,1,2\}$ on
$\mathbb{S}^{n}$. In this section, we will further analyze the chain control
sets in $\overline{\mathbb{S}^{n,+}}$.

Theorem \ref{Theorem_Cor32}(ii) determines the intersection of the central
Selgrade bundle $\mathcal{V}_{c}^{1}$ with $\mathcal{U}\times\mathbb{S}^{n,0}%
$. In this section, we restrict the analysis to the case $\ell^{0}=1$, hence
there is a unique subbundle $\mathcal{V}_{i}^{\infty},i=\ell^{+}+1$, contained
in $\mathcal{V}_{c}^{1}$ and%
\[
\mathcal{V}_{c}^{1}\cap\left(  \mathcal{U}\times\mathbb{S}^{n,0}\right)
=\mathcal{V}_{\ell^{+}+1}^{\infty}\cap\left(  \mathcal{U}\times\mathbb{S}%
^{n,0}\right)  \text{ with }\dim\mathcal{V}_{c}^{1}=1+\dim\mathcal{V}%
_{\ell^{+}+1}^{\infty}.
\]
Here $\mathcal{V}_{\ell^{+}+1}^{\infty}=\mathcal{V}_{\ell^{+}+1}%
\times\{0\}\subset\mathcal{V}_{c}^{1}$, where $\mathcal{V}_{\ell^{+}+1}$ is a
Selgrade bundle of the homogeneous part (\ref{bilinear0}). The Selgrade
bundles $\mathcal{V}_{i}$ are related to the chain control sets $_{\mathbb{P}%
}E_{i}^{\hom}$ in $\mathbb{P}^{n-1}$ of the homogeneous part (\ref{bilinear0})
by%
\[
\mathcal{V}_{i}=\left\{  (u,x)\in\mathcal{U}\times\mathbb{R}^{n}\left\vert
\mathbb{P}\varphi(t,x,u)\in\,_{\mathbb{P}}E_{i}^{\hom}\text{ for all }%
t\in\mathbb{R}\right.  \right\}  .
\]
We will analyze chain controllability properties in the closure $\overline
{\mathbb{S}^{n,+}}=\mathbb{S}^{n,+}\cup\mathbb{S}^{n,0}$ of the northern
hemisphere. On the level of chain control sets this means that for the central
chain control set $_{\mathbb{P}}E_{c}$ in $\mathbb{P}^{n}$ there is a single
chain control set $_{\mathbb{P}}E^{\hom}$ of the projectivized homogeneous
part with%
\[
_{\mathbb{P}}E_{c}\cap\mathbb{P}^{n,0}=\,_{\mathbb{P}}E^{\hom}\times\{0\}.
\]
Actually, we even require that $_{\mathbb{P}}E^{\hom}$ is the projection of a
single chain control set $_{\mathbb{S}}E_{0}^{\hom}$ on $\mathbb{S}^{n-1}$.
With this assumption on the level of chain control sets on $\mathbb{S}^{n}$ we
prove the following lemma.

\begin{lemma}
\label{Lemma6.1}Consider an affine control system of the form (\ref{affine1})
on $\mathbb{R}^{n}$ and assume that a central chain control set $_{\mathbb{S}%
}E_{c,j},j\in\{0,1,2\}$, in $\mathbb{S}^{n}$ satisfies%
\begin{equation}
_{\mathbb{S}}E_{c,j}\cap\mathbb{S}^{n,0}=\,_{\mathbb{S}}E_{0}^{\hom}%
\times\{0\}, \label{6.1}%
\end{equation}
where the chain control set $_{\mathbb{S}}E_{0}^{\hom}$ is the preimage in
$\mathbb{S}^{n-1}$ of a chain control set $_{\mathbb{P}}E^{\hom}$ of the
projectivized homogeneous part (\ref{bilinear0}). Then the intersection
$_{\mathbb{S}}E_{c,j}\cap\overline{\mathbb{S}^{n,+}}$ is chain controllable on
$\overline{\mathbb{S}^{n,+}}$.
\end{lemma}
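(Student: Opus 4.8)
The plan is to establish chain controllability of $_{\mathbb{S}}E_{c,j}\cap\overline{\mathbb{S}^{n,+}}$ on $\overline{\mathbb{S}^{n,+}}$ as follows. For given endpoints $s,s'\in{}_{\mathbb{S}}E_{c,j}\cap\overline{\mathbb{S}^{n,+}}$ and $\varepsilon,T>0$, I would start from a controlled chain from $s$ to $s'$ in the full sphere $\mathbb{S}^{n}$, which exists because $_{\mathbb{S}}E_{c,j}$ is a chain control set there, and then surgically modify it so that it remains in $\overline{\mathbb{S}^{n,+}}$. Two features make this possible: since $\dot z=0$ in (\ref{hom_d+1}), the sign of the last coordinate is preserved, so $\mathbb{S}^{n,+}$, $\mathbb{S}^{n,-}$ and the equator $\mathbb{S}^{n,0}$ are invariant and trajectory pieces never cross the equator — only the small jumps of a chain can; and, by the hypothesis together with Theorem~\ref{Theorem_twoS}(i), the equatorial trace $_{\mathbb{S}}E_{c,j}\cap\mathbb{S}^{n,0}={}_{\mathbb{S}}E_0^{\hom}\times\{0\}$ is a single, antipodally symmetric chain control set, $_{\mathbb{S}}E_0^{\hom}=-{}_{\mathbb{S}}E_0^{\hom}$.

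I would first record two preparatory facts. Using the antipodal relation $\mathbb{S}\psi^{1}(t,-s,u)=-\mathbb{S}\psi^{1}(t,s,u)$ from Remark~\ref{Remark_southern} together with $d(-a,-b)=d(a,b)$, reflecting a trajectory piece through the origin leaves all its jump estimates intact and moves a southern piece into $\mathbb{S}^{n,+}$. Secondly, a compactness argument (using that $_{\mathbb{S}}E_{c,j}$ is compact with $_{\mathbb{S}}E_{c,j}\cap\mathbb{S}^{n,0}={}_{\mathbb{S}}E_0^{\hom}\times\{0\}$) gives: for every $\kappa>0$ there is $\delta>0$ such that any point within $\delta$ of $_{\mathbb{S}}E_{c,j}$ and within $\delta$ of the equator lies within $\kappa$ of $_{\mathbb{S}}E_0^{\hom}\times\{0\}$. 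I would then choose the jump size $\eta$ of the sphere chain so small that its points lie in the $\eta$-neighbourhood of $_{\mathbb{S}}E_{c,j}$ (chains between points of a chain control set of the compact system on $\mathbb{S}^{n}$ may be taken in arbitrarily small neighbourhoods of it) and that $\eta\le\delta$.

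Now comes the surgery. Label each chain point as lying in $\overline{\mathbb{S}^{n,+}}$ (type N) or in $\mathbb{S}^{n,-}$ (type S); the endpoints $s,s'$ are of type N. On each maximal block of consecutive S-points I would apply the antipodal reflection $s^{i}\mapsto-s^{i}$ with unchanged controls; by the first preparatory fact the interior jumps of the block are preserved and now lie in $\mathbb{S}^{n,+}$. It remains to repair the two jumps at the ends of each block, where the modified chain must pass from a point $p\in\overline{\mathbb{S}^{n,+}}$ near the equator to a point $q\in\overline{\mathbb{S}^{n,+}}$ near the equator. Each of $p,q$ is within $\eta$ of the equator and, possibly after applying the antipodal map, within $2\eta$ of $_{\mathbb{S}}E_{c,j}$; hence by the compactness estimate and the symmetry $_{\mathbb{S}}E_0^{\hom}=-{}_{\mathbb{S}}E_0^{\hom}$, both $p$ and $q$ lie within $\kappa$ of $_{\mathbb{S}}E_0^{\hom}\times\{0\}$. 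I would therefore pick $w,w'\in{}_{\mathbb{S}}E_0^{\hom}\times\{0\}$ with $d(p,w),d(q,w')<\kappa$, jump from $p$ to $w$, run a controlled chain from $w$ to $w'$ inside the invariant equator (it exists since $_{\mathbb{S}}E_0^{\hom}\times\{0\}$ is a chain control set there, and it stays in $\mathbb{S}^{n,0}\subset\overline{\mathbb{S}^{n,+}}$), and finally jump to $q$. Taking $\kappa$ and the equatorial jump size small relative to $\varepsilon$ renders every new jump $<\varepsilon$, while every trajectory piece still has length $\ge T$; concatenating the repaired blocks yields a controlled $(\varepsilon,T)$-chain from $s$ to $s'$ lying entirely in $\overline{\mathbb{S}^{n,+}}$. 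Since $s,s'$ were arbitrary, this is the asserted chain controllability.

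The main obstacle is exactly the end-of-block repair, i.e.\ crossing the equator while staying in the closed northern hemisphere: a direct antipodal reflection of a boundary jump is \emph{not} admissible, because the antipodal map reverses all coordinates and so destroys the horizontal part of the jump estimate. This is where the hypothesis is essential — because the equatorial trace is a single, antipodally symmetric chain control set, a near-equator point and its reflection are close to the \emph{same} chain-controllable set, so the crossing can be re-routed through the equator; had the equatorial trace split into two antipodal chain control sets, the two sides would be chain controllable only separately and the repair would fail. The remaining items to check carefully are the uniform compactness estimate and the bookkeeping of jump sizes and of the time lower bounds $\ge T$ through each inserted equatorial excursion, both of which are routine once $\eta$ and the equatorial jump size are fixed small enough relative to $\varepsilon$.
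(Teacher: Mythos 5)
Your proposal is correct and follows essentially the same strategy as the paper's proof: take a chain in the full compact sphere staying near $_{\mathbb{S}}E_{c,j}$, antipodally reflect the southern portions (which preserves trajectories and interior jump estimates), and splice in equatorial chains inside $_{\mathbb{S}}E_{0}^{\hom}\times\{0\}$ at each crossing, exploiting the antipodal symmetry $_{\mathbb{S}}E_{0}^{\hom}=-\,_{\mathbb{S}}E_{0}^{\hom}$ guaranteed by hypothesis (\ref{6.1}). The only point of divergence is how the equatorial waypoints are located: you use a compactness estimate (points near $_{\mathbb{S}}E_{c,j}$ and near $\mathbb{S}^{n,0}$ are near $_{\mathbb{S}}E_{c,j}\cap\mathbb{S}^{n,0}$), whereas the paper produces an exact point of $_{\mathbb{S}}E_{c,j}\cap\mathbb{S}^{n,0}$ on the normalized segment joining the two sides of the crossing jump via the convexity statement of Theorem \ref{Theorem_twoS}(iii) — both mechanisms work here.
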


\begin{proof}
Let $s,s^{\prime}\in\,_{\mathbb{S}}E_{c,j}\cap\mathbb{S}^{n,+}$. We construct
for $\varepsilon,T>0$ a controlled $(\varepsilon,T)$-chain in $\overline
{\mathbb{S}^{n,+}}$ from $s$ to $s^{\prime}$. Choose a controlled
$(\varepsilon/2,T)$-chain $\zeta$ in $\mathbb{S}^{n}$ from $s\in
\,_{\mathbb{S}}E_{c,j}\cap\mathbb{S}^{n,+}$ to $s^{\prime}\in\,_{\mathbb{S}%
}E_{c,j}\cap\mathbb{S}^{n,+}$ given by $T_{0},\ldots,T_{k-1}\geq
T,u^{0},\ldots,u^{k-1}\in\mathcal{U},s^{0}=s,s^{1},\ldots,s^{k}=s^{\prime}$ in
$_{\mathbb{S}}E_{c,j}$ with%
\[
d(\mathbb{S}\varphi(T_{i},s^{i},u^{i}),s^{i+1})<\varepsilon/2\text{ for all
}i.
\]
Since $\mathbb{S}^{n}$ is compact, the chain may be chosen in $_{\mathbb{S}%
}E_{c,j}$. If all $s^{i}$ are in $\mathbb{S}^{n,+}$, we are done. Otherwise
there is a minimal $i$, called a crossing index, with%
\begin{equation}
s^{i}\in S^{n,+}\text{ and }s^{i+1}\in\mathbb{S}^{n,-}\cup\mathbb{S}^{n,0}.
\label{crossing}%
\end{equation}
\textbf{Step 1: }If $s^{i+1}\in\mathbb{S}^{n,0}$, define $\widetilde
{s}:=s^{i+1}$. Otherwise $s^{i+1}\in\mathbb{S}^{n,-}$. By invariance of
$\mathbb{S}^{n,+}$ we know that $\mathbb{S}\varphi(T_{i},s^{i},u^{i})\in
S^{n,+}$ and the estimate%
\[
d(\mathbb{S}\varphi(T_{i},s^{i},u^{i}),s^{i+1})<\varepsilon/2
\]
holds. By Theorem \ref{Theorem_twoS}(iii) it follows that%
\[
\left\{  \left.  \frac{\alpha\mathbb{S}\varphi(T_{i},s^{i},u^{i}%
)+(1-\alpha)s^{i+1}}{\left\Vert \alpha\mathbb{S}\varphi(T_{i},s^{i}%
,u^{i})+(1-\alpha)s^{i+1}\right\Vert }\right\vert \alpha\in\lbrack
0,1]\right\}  \subset\,_{\mathbb{S}}E_{c,j}.
\]
The last component of $\mathbb{S}\varphi(T_{i},s^{i},u^{i})$ is positive, and
the last component of $s^{i+1}$ is negative. Hence there is $\alpha\in(0,1)$
such that%
\[
\widetilde{s}:=\frac{\alpha\mathbb{S}\varphi(T_{i},s^{i},u^{i})+(1-\alpha
)s^{i+1}}{\left\Vert \alpha\mathbb{S}\varphi(T_{i},s^{i},u^{i})+(1-\alpha
)s^{i+1}\right\Vert }\in\,_{\mathbb{S}}E_{c,j}\cap\mathbb{S}^{n,0}.
\]
This implies%
\[
d(\mathbb{S}\varphi(T_{i},s^{i},u^{i}),\widetilde{s})<\varepsilon/2\text{ and
}d(\widetilde{s},s^{i+1})<\varepsilon/2.
\]
By assumption (\ref{6.1}) it follows that there is $\widehat{s}\in
\,_{\mathbb{S}}E_{0}^{\hom}$ with $\widetilde{s}=(\widehat{s},0)$. Since
$_{\mathbb{S}}E_{0}^{\hom}=-\,_{\mathbb{S}}E_{0}^{\hom}$ there is a controlled
$(\varepsilon/2,T)$-chain of the homogeneous part from $\widehat{s}$ to
$-\widehat{s}$. This defines a controlled $(\varepsilon/2,T)$-chain $\zeta
^{1}$ in $\mathbb{S}^{n}$ from $(\widehat{s},0)$ to $(-\widehat{s},0)$ and%
\[
d(-\widetilde{s},-s^{i+1})=d(-(\widehat{s},0),-s^{i+1})=d((\widehat
{s},0),s^{i+1})<\varepsilon/2.
\]
Replace the final point $(-\widehat{s},0)$ of $\zeta^{1}$ by $-s^{i+1}$, hence
the jump length is bounded by%
\[
d(\mathbb{S}\varphi(T_{i},s^{i},u^{i}),-s^{i+1})\leq d(\mathbb{S}\varphi
(T_{i},s^{i},u^{i}),-\widetilde{s})+d(-\widetilde{s},-s^{i+1})<\varepsilon
/2+\varepsilon/2=\varepsilon.
\]
Then we insert this controlled $(\varepsilon,T)$-chain between $\mathbb{S}%
\varphi(T_{i},s^{i},u^{i})$ and $-s^{i+1}$. The concatenation is a controlled
$(\varepsilon,T)$-chain from $s$ to $-s^{i+1}$.

\textbf{Step 2: }Next we consider what happens for $i+2$.

(a) Suppose first that $s^{i+2}\in S^{n,+}$. The construction in \textbf{Step
1} has led us to the point $-s^{i+1}\in\mathbb{S}^{n,+}$. We know that%
\[
d(\mathbb{S}\varphi(T_{i+1},s^{i+1},u^{i+1}),s^{i+2})<\varepsilon.
\]
By invariance of $\mathbb{S}^{n,-}$ we find, similarly as in \textbf{Step 1} a
point $(\widehat{s},0)$ with $\widehat{s}\in\,_{\mathbb{S}}E^{\hom}$ with%
\[
d(\mathbb{S}\varphi(T_{i+1},s^{i+1},u^{i+1}),(\widehat{s},0))<\varepsilon
/2\text{ and }d((\widehat{s},0),s^{i+2})<\varepsilon/2.
\]
This implies%
\[
d(\mathbb{S}\varphi(T_{i+1},-s^{i+1},u^{i+1}),(-\widehat{s},0))=d(\mathbb{S}%
\varphi(T_{i+1},s^{i+1},u^{i+1}),(\widehat{s},0))<\varepsilon/2.
\]
We insert a controlled $(\varepsilon/2,T)$-chain $\zeta^{2}$ from
$(-\widehat{s},0)$ to $(\widehat{s},0)$ and replace the final point by
$s^{i+2}$. The concatenation is a controlled $(\varepsilon,T)$-chain from $s$
to $s^{i+2}\in\mathbb{S}^{n,+}$.

(b) Suppose that $s^{i+2}\in S^{n,-}\cup\mathbb{S}^{n,0}$. Since%
\begin{equation}
d(\mathbb{S}\varphi(T_{i+1},-s^{i+1},u^{i+1}),-s^{i+2})=d(\mathbb{S}%
\varphi(T_{i+1},s^{i+1},u^{i+1}),s^{i+2})<\varepsilon\label{final}%
\end{equation}
we have a controlled $(\varepsilon,T)$-chain from $s$ to $-s^{i+2}%
\in\mathbb{S}^{n,+}\cup\mathbb{S}^{n,0}$. If $s^{i+3}\in\mathbb{S}^{n,-}%
\cup\mathbb{S}^{n,0}$ we obtain a controlled $(\varepsilon,T)$-chain from $s$
to $-s^{i+3}\in\mathbb{S}^{n,+}\cup\mathbb{S}^{n,0}$. If $s^{i+3}\in
\mathbb{S}^{n,+}$ we have%
\[
d(\mathbb{S}\varphi(T_{i+2},-s^{i+2},u^{i+2}),-s^{i+3})<\varepsilon.
\]
The construction in \textbf{Step 1} yields a point $\widetilde{s}=(\widehat
{s},0)$ with $\widehat{s}\in E_{0}^{\hom}$ with%
\[
d(\mathbb{S}\varphi(T_{i+2},-s^{i+2},u^{i+2}),\widetilde{s})<\varepsilon
/2\text{ and }d(\widetilde{s},-s^{i+3})<\varepsilon/2.
\]
Then one obtains a controlled $(\varepsilon,T)$- chain from $-s^{i+2}$ to
$s^{i+3}\in\mathbb{S}^{n,+}\cup\mathbb{S}^{n,0}$.

This \textquotedblleft spatchcocking procedure\textquotedblright\ can be
repeated for every crossing index. Thus we get a controlled $(\varepsilon
,T)$-chain in $\overline{\mathbb{S}^{n,+}}$ from $s$ to $s^{\prime}$. This
shows that $_{\mathbb{S}}E_{c,j}\cap\mathbb{S}^{n,+}$ is chain controllable in
$\overline{\mathbb{S}^{n,+}}$. Since chain control sets are closed this
implies that also $_{\mathbb{S}}E_{c,j}\cap\overline{\mathbb{S}^{n,+}}$ is
chain controllable in $\overline{\mathbb{S}^{n,+}}$.
\end{proof}

The next theorem describes in a case with $\ell^{0}=1$ the chain control sets
in the closure of the northern hemisphere.

\begin{theorem}
\label{Theorem_S+}Consider an affine control system of the form (\ref{affine1}%
) on $\mathbb{R}^{n}$. Assume for the central chain control set $_{\mathbb{P}%
}E_{c}$ in $\mathbb{P}^{n}$ that the intersection with $\mathbb{P}^{n,0}$ is a
single chain control set $_{\mathbb{P}}E^{\hom}\times\{0\}$ of the homogeneous
part (\ref{bilinear0}) and that the preimage in $\mathbb{S}^{n-1}$ of
$_{\mathbb{P}}E^{\hom}$ is a single chain control set $_{\mathbb{S}}%
E_{0}^{\hom}$.

(i) Suppose that the preimage in $\mathbb{S}^{n}$ of $_{\mathbb{P}}E_{c}$ is a
single chain control set $_{\mathbb{S}}E_{c,0}$. Then the intersection
$_{\mathbb{S}}E_{c,0}\cap\overline{\mathbb{S}^{n,+}}$ is the unique chain
control set in $\overline{\mathbb{S}^{n,+}}$, which is not contained in the
equator $\mathbb{S}^{n,0}\,.$

(ii) Suppose that the preimage in $\mathbb{S}^{n}$ of $_{\mathbb{P}}E_{c}$ is
the union of two chain control sets $_{\mathbb{S}}E_{c,1}$ and $_{\mathbb{S}%
}E_{c,2}$. Then every chain control set in $\overline{\mathbb{S}^{n,+}}$,
which is not contained in $\mathbb{S}^{n,0}$, coincides with $_{\mathbb{S}%
}E_{c,1}\cap\overline{\mathbb{S}^{n,+}}$ or $_{\mathbb{S}}E_{c,2}\cap
\overline{\mathbb{S}^{n,+}}$.
\end{theorem}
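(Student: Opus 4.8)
The plan is to treat both parts by one argument that couples Lemma \ref{Lemma6.1}, which supplies chain controllability of the candidate sets, with the projection $\mathbb{S}^{n}\rightarrow\mathbb{P}^{n}$, which forces any chain control set meeting the open hemisphere into the preimage of $_{\mathbb{P}}E_{c}$. First I would record that $\overline{\mathbb{S}^{n,+}}=\mathbb{S}^{n,+}\cup\mathbb{S}^{n,0}$ is invariant under the induced flow: in the lifted system (\ref{hom_d+1}) the last coordinate satisfies $\dot{z}=0$, so the sign of $s_{n+1}$ is preserved along trajectories and each open hemisphere as well as the equator is invariant. Hence (\ref{system_S}) restricts to a control system on the compact space $\overline{\mathbb{S}^{n,+}}$, and every controlled $(\varepsilon,T)$-chain that remains in $\overline{\mathbb{S}^{n,+}}$ is at the same time a chain in $\mathbb{S}^{n}$. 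In particular, a chain control set $F$ of the restricted system is a chain controllable subset of $\mathbb{S}^{n}$ and therefore lies in a single chain control set $_{\mathbb{S}}E$ of the sphere system.

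The heart of the proof is the following containment. Suppose $F$ is a chain control set in $\overline{\mathbb{S}^{n,+}}$ with $F\not\subset\mathbb{S}^{n,0}$, and pick $s\in F\cap\mathbb{S}^{n,+}$, so that $\mathbb{P}s\in\mathbb{P}^{n,1}$. By Lemma \ref{Lemma_sphere1}(i) the projection of $_{\mathbb{S}}E$ to $\mathbb{P}^{n}$ is contained in a chain control set of $\mathbb{P}^{n}$, and since $_{\mathbb{P}}E_{c}$ is the only projective chain control set meeting $\mathbb{P}^{n,1}$ (Theorem \ref{Theorem_E}(i)), that chain control set is $_{\mathbb{P}}E_{c}$. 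Thus $_{\mathbb{S}}E$ lies in the preimage of $_{\mathbb{P}}E_{c}$, which is $_{\mathbb{S}}E_{c,0}$ in the situation of (i) and the disjoint union $_{\mathbb{S}}E_{c,1}\cup\,_{\mathbb{S}}E_{c,2}$ in the situation of (ii). Being a single chain control set, $_{\mathbb{S}}E$ cannot meet both pieces in the second case, so $_{\mathbb{S}}E=\,_{\mathbb{S}}E_{c,j}$ for the index $j$ with $s\in\,_{\mathbb{S}}E_{c,j}$; in either case $F\subset\,_{\mathbb{S}}E_{c,j}\cap\overline{\mathbb{S}^{n,+}}$.

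To upgrade this containment to equality I would invoke Lemma \ref{Lemma6.1}, by which $_{\mathbb{S}}E_{c,j}\cap\overline{\mathbb{S}^{n,+}}$ is chain controllable in $\overline{\mathbb{S}^{n,+}}$. As $F$ is a maximal chain controllable set of the restricted system and is contained in the chain controllable set $_{\mathbb{S}}E_{c,j}\cap\overline{\mathbb{S}^{n,+}}$, maximality forces $F=\,_{\mathbb{S}}E_{c,j}\cap\overline{\mathbb{S}^{n,+}}$. This single equality does both jobs: applied to the chain control set that contains $_{\mathbb{S}}E_{c,j}\cap\overline{\mathbb{S}^{n,+}}$, which is not contained in the equator because $_{\mathbb{P}}E_{c}\cap\mathbb{P}^{n,1}\neq\varnothing$ by Theorem \ref{Theorem_E}(i), it shows that $_{\mathbb{S}}E_{c,j}\cap\overline{\mathbb{S}^{n,+}}$ is itself a chain control set; applied to an arbitrary chain control set not contained in $\mathbb{S}^{n,0}$ it shows that no other one exists. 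This delivers the uniqueness of (i) and the dichotomy between $_{\mathbb{S}}E_{c,1}\cap\overline{\mathbb{S}^{n,+}}$ and $_{\mathbb{S}}E_{c,2}\cap\overline{\mathbb{S}^{n,+}}$ of (ii).

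The step I expect to be the main obstacle is precisely the one delegated to Lemma \ref{Lemma6.1}: keeping the chains inside $\overline{\mathbb{S}^{n,+}}$. A chain in $\mathbb{S}^{n}$ joining two northern points may dip into the southern hemisphere, and rerouting it along the equator succeeds only because the equatorial chain control set satisfies $_{\mathbb{S}}E_{0}^{\hom}=-\,_{\mathbb{S}}E_{0}^{\hom}$, which allows passage from a point to its antipode within $\mathbb{S}^{n,0}$. This symmetry is exactly the standing hypothesis that $_{\mathbb{P}}E^{\hom}$ lifts to a single chain control set $_{\mathbb{S}}E_{0}^{\hom}$. I would note in passing that, by the last theorem of Section \ref{Section5}, this same hypothesis already forces the preimage of $_{\mathbb{P}}E_{c}$ to be a single central chain control set, so that part (i) describes the situation that actually arises, part (ii) being recorded for completeness.
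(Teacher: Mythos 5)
Your proof is correct and follows essentially the same route as the paper's: Lemma \ref{Lemma6.1} supplies chain controllability of $_{\mathbb{S}}E_{c,j}\cap\overline{\mathbb{S}^{n,+}}$, and the observation that any chain control set in $\overline{\mathbb{S}^{n,+}}$ meeting the open hemisphere is contained in some $_{\mathbb{S}}E_{c,j}$ (which you justify in more detail via Lemma \ref{Lemma_sphere1}(i) and Theorem \ref{Theorem_E}(i)) combines with maximality to give the conclusion. Your closing remark that the standing hypothesis on $_{\mathbb{S}}E_{0}^{\hom}$ already rules out case (ii) by the last theorem of Section \ref{Section5} is a correct observation that the paper does not make explicit.
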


\begin{proof}
For assertions (i) and (ii) the assumptions of Lemma \ref{Lemma6.1} hold.
Hence the intersections $_{\mathbb{S}}E_{c,j}\cap\overline{\mathbb{S}^{n,+}}$
are chain controllable in $\overline{\mathbb{S}^{n,+}}$. Since any chain
control set in $\overline{\mathbb{S}^{n,+}}$ having nonvoid intersection with
$\mathbb{S}^{n,+}$ is contained in some $_{\mathbb{S}}E_{c,j}$ this are the
chain control sets in $\overline{\mathbb{S}^{n,+}}$.
\end{proof}

If the assumptions of Theorem \ref{Theorem_S+} are satisfied it follows that
the images $_{\mathbb{S}}E^{\ast}$ of the strong chain control sets $E^{\ast}$
of the affine system are, in case (i) contained in $_{\mathbb{S}}E_{c,0}%
\cap\mathbb{S}^{n,+}$ and, in case (ii) in $_{\mathbb{S}}E_{c,1}\cap
\mathbb{S}^{n,+}$ or $_{\mathbb{S}}E_{c,2}\cap\mathbb{S}^{n,+}$. If the strong
chain control sets $E^{\ast}$ are unbounded (cf. Proposition
\ref{Proposition_unbounded}), the closures of the strong chain control sets
$_{\mathbb{S}}E^{\ast}$ in $_{\mathbb{S}}E_{c,0}\cap\mathbb{S}^{n,+}$ also
intersect $_{\mathbb{S}}E_{0}^{\hom}\times\{0\}$.

\begin{remark}
The assertions of Theorem \ref{Theorem_S+} also hold when the northern
hemisphere $\mathbb{S}^{n,+}$ is replaced by the southern hemisphere
$\mathbb{S}^{n,-}$.
\end{remark}

Next, we consider a situation where again the intersection of the central
chain control set $_{\mathbb{P}}E_{c}$ with $\mathbb{P}^{n,0}$ is a single
chain control set $_{\mathbb{P}}E^{\hom}$ of the homogeneous part. But now we
suppose that the preimage in $\mathbb{S}^{n-1}$ of $_{\mathbb{P}}E^{\hom}$
consists of two chain control set $_{\mathbb{S}}E_{1}^{\hom}$ and
$_{\mathbb{S}}E_{2}^{\hom}$. On the northern and southern hemispheres
$\mathbb{S}^{n,+}$ and $\mathbb{S}^{n,-}$, respectively, two chain control
sets exist. They yield two chain control sets on $\mathbb{S}^{n}$ and each of
them intersects both hemispheres.

\begin{theorem}
\label{Theorem_union}Consider an affine control system of the form
(\ref{affine1}) on $\mathbb{R}^{n}$. Assume for the central chain control set
$_{\mathbb{P}}E_{c}$ in $\mathbb{P}^{n}$ that the intersection with
$\mathbb{P}^{n,0}$ is a single chain control set $_{\mathbb{P}}E^{\hom}%
\times\{0\}$ of the homogeneous part and that the preimage in $\mathbb{S}%
^{n-1}$ of $_{\mathbb{P}}E^{\hom}$ is the union of two chain control sets
$_{\mathbb{S}}E_{1}^{\hom}$ and $_{\mathbb{S}}E_{2}^{\hom}$. Furthermore, let
$_{\mathbb{S}}E_{1}^{+}$ and $\,_{\mathbb{S}}E_{2}^{+}$ be two chain control
sets in the northern hemisphere $\mathbb{S}^{n,+}$ such that%
\[
\overline{_{\mathbb{S}}E_{1}^{+}}\cap\left(  _{\mathbb{S}}E_{1}^{\hom}%
\times\{0\}\right)  \not =\varnothing\text{ and }\overline{_{\mathbb{S}}%
E_{2}^{+}}\cap\left(  _{\mathbb{S}}E_{2}^{\hom}\times\{0\}\right)
\not =\varnothing.
\]
Then, with the chain control sets $_{\mathbb{S}}E_{1}^{-}:=-\,_{\mathbb{S}%
}E_{1}^{+}$ and $_{\mathbb{S}}E_{2}^{-}:=-\,_{\mathbb{S}}E_{2}^{+}$ in the
southern hemisphere $\mathbb{S}^{n,-}$, both sets $_{\mathbb{S}}E_{1}^{+}%
\cup\,_{\mathbb{S}}E_{2}^{-}$ and $_{\mathbb{S}}E_{2}^{+}\cup\,_{\mathbb{S}%
}E_{1}^{-}$ are contained in chain control sets in $\mathbb{S}^{n}$.
\end{theorem}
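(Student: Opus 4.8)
The plan is to show that the union $_{\mathbb{S}}E_{1}^{+}\cup\,_{\mathbb{S}}E_{2}^{-}$ is chain controllable as a subset of $\mathbb{S}^{n}$; the maximality property of chain control sets (Theorem \ref{Theorem_E_omega} and Definition \ref{Definition_chain_control}) then forces it into a single chain control set. The device that links the two pieces is the equatorial chain control set $_{\mathbb{S}}E_{1}^{\hom}\times\{0\}$, which will serve as a bridge exactly as in the spatchcocking argument of Lemma \ref{Lemma6.1}, but now with two distinct equatorial sets rather than one antipode-invariant one.

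The first step is the antipodal bookkeeping, which is the crux of the whole argument. Since the preimage of $_{\mathbb{P}}E^{\hom}$ is a union of two chain control sets, Theorem \ref{Theorem_twoS}(ii) gives $_{\mathbb{S}}E_{2}^{\hom}=-\,_{\mathbb{S}}E_{1}^{\hom}$. By hypothesis $\overline{_{\mathbb{S}}E_{1}^{+}}$ meets $_{\mathbb{S}}E_{1}^{\hom}\times\{0\}$, while $\overline{_{\mathbb{S}}E_{2}^{+}}$ meets $_{\mathbb{S}}E_{2}^{\hom}\times\{0\}$. Applying the antipodal map $s\mapsto-s$, under which the systems on the two hemispheres are conjugate (Remark \ref{Remark_southern}) and which sends chain control sets to chain control sets, I find that $\overline{_{\mathbb{S}}E_{2}^{-}}=-\overline{_{\mathbb{S}}E_{2}^{+}}$ meets $-\left(\,_{\mathbb{S}}E_{2}^{\hom}\times\{0\}\right)=\,_{\mathbb{S}}E_{1}^{\hom}\times\{0\}$. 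Thus the northern piece $_{\mathbb{S}}E_{1}^{+}$ and the southern piece $_{\mathbb{S}}E_{2}^{-}$ approach the \emph{same} equatorial chain control set $_{\mathbb{S}}E_{1}^{\hom}\times\{0\}$; fix points $\widehat{s}\in\overline{_{\mathbb{S}}E_{1}^{+}}\cap(_{\mathbb{S}}E_{1}^{\hom}\times\{0\})$ and $\widehat{s}^{\prime}\in\overline{_{\mathbb{S}}E_{2}^{-}}\cap(_{\mathbb{S}}E_{1}^{\hom}\times\{0\})$.

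The second step is the bridging mechanism, establishing that each hemisphere piece is chain reachable to and from the bridge. Given $x\in\,_{\mathbb{S}}E_{1}^{+}$ and $\varepsilon,T>0$, I would pick $p\in\,_{\mathbb{S}}E_{1}^{+}$ with $d(p,\widehat{s})<\varepsilon/2$ and use chain transitivity of $_{\mathbb{S}}E_{1}^{+}$ (together with invariance of $\mathbb{S}^{n,+}$, so these chains are chains in $\mathbb{S}^{n}$) to get an $(\varepsilon/2,T)$-chain from $x$ to $p$; replacing the final jump target $p$ by $\widehat{s}$ keeps the last jump below $\varepsilon$ by the triangle inequality and yields an $(\varepsilon,T)$-chain from $x$ to $\widehat{s}$. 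Reversing this — chaining on the equator to a flow endpoint within $\varepsilon/2$ of $\widehat{s}$, redirecting that jump onto the nearby point $p\in\,_{\mathbb{S}}E_{1}^{+}$, and continuing inside $_{\mathbb{S}}E_{1}^{+}$ — produces a chain from $\widehat{s}$ back to $x$. Since $_{\mathbb{S}}E_{1}^{\hom}\times\{0\}$ is chain transitive within the invariant equator, $x$ is chain reachable to and from every point of $_{\mathbb{S}}E_{1}^{\hom}\times\{0\}$. The identical argument with $\mathbb{S}^{n,-}$ in place of $\mathbb{S}^{n,+}$ and $\widehat{s}^{\prime}$ in place of $\widehat{s}$ shows the same for every $y\in\,_{\mathbb{S}}E_{2}^{-}$.

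Finally I would concatenate: for $x\in\,_{\mathbb{S}}E_{1}^{+}$ and $y\in\,_{\mathbb{S}}E_{2}^{-}$ one chains from $x$ into $_{\mathbb{S}}E_{1}^{\hom}\times\{0\}$ at $\widehat{s}$, across the bridge to $\widehat{s}^{\prime}$, and out into $_{\mathbb{S}}E_{2}^{-}$ toward $y$, and symmetrically from $y$ to $x$; pairs inside a single piece are joined by the respective chain transitivity. Hence $_{\mathbb{S}}E_{1}^{+}\cup\left(_{\mathbb{S}}E_{1}^{\hom}\times\{0\}\right)\cup\,_{\mathbb{S}}E_{2}^{-}$ is chain controllable in $\mathbb{S}^{n}$, so its subset $_{\mathbb{S}}E_{1}^{+}\cup\,_{\mathbb{S}}E_{2}^{-}$ lies in a chain control set. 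The statement for $_{\mathbb{S}}E_{2}^{+}\cup\,_{\mathbb{S}}E_{1}^{-}$ follows verbatim, now bridging through $_{\mathbb{S}}E_{2}^{\hom}\times\{0\}$, which is touched by both $\overline{_{\mathbb{S}}E_{2}^{+}}$ and $\overline{_{\mathbb{S}}E_{1}^{-}}$. I expect the main obstacle to be the second step: confirming that mere closure-contact with an equatorial chain control set upgrades to two-sided chain reachability, which relies on the jump-target replacement respecting the $\varepsilon$-bound while the chain never leaves the invariant open hemisphere until the single controlled jump that lands it on the equator.
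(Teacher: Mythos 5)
Your proposal is correct and follows essentially the same route as the paper: the antipodal identity $_{\mathbb{S}}E_{2}^{\hom}=-\,_{\mathbb{S}}E_{1}^{\hom}$ shows that $\overline{_{\mathbb{S}}E_{2}^{-}}$ meets the same equatorial chain control set $_{\mathbb{S}}E_{1}^{\hom}\times\{0\}$ as $\overline{_{\mathbb{S}}E_{1}^{+}}$, and chain controllability of the union follows by bridging through that set. The only difference is that you spell out the closure-contact-to-two-sided-reachability step (redirecting the final jump onto a nearby point), which the paper leaves implicit in the phrase ``this implies that the union is a chain controllable set''; your treatment of that step is sound.
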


\begin{proof}
Since $_{\mathbb{S}}E_{2}^{-}=-\,_{\mathbb{S}}E_{2}^{+}$ and $_{\mathbb{S}%
}E_{1}^{\hom}=-\,_{\mathbb{S}}E_{2}^{\hom}$ it follows that%
\[
\overline{_{\mathbb{S}}E_{2}^{-}}\cap\left(  _{\mathbb{S}}E_{1}^{\hom}%
\times\{0\}\right)  =-\overline{\,_{\mathbb{S}}E_{2}^{+}}\cap\left(
-\,_{\mathbb{S}}E_{2}^{\hom}\times\{0\}\right)  =-\left(  \overline
{_{\mathbb{S}}E_{2}^{+}}\cap\left(  _{\mathbb{S}}E_{2}^{\hom}\times
\{0\}\right)  \right)  \not =\varnothing.
\]
This implies that the union
\[
\,_{\mathbb{S}}E_{1}^{+}\cup\,_{\mathbb{S}}E_{2}^{-}\cup\left(  _{\mathbb{S}%
}E_{1}^{\hom}\times\{0\}\right)
\]
is a chain controllable set, hence contained in a chain control set in
$\mathbb{S}^{n}$. The assertion for $_{\mathbb{S}}E_{2}^{+}\cup\,_{\mathbb{S}%
}E_{1}^{-}$ follows analogously.
\end{proof}

The following example (cf. Colonius, Santana, and Setti \cite[Example
5.17]{ColSS22}, \cite[Example 6.10]{ColSS23}) illustrates Theorem
\ref{Theorem_union}. It shows the delicate relations between chain control
sets and strong chain control sets in the various spaces $\mathbb{S}^{n}$ and
$\mathbb{P}^{n}$ as well as in the hemispheres $\mathbb{S}^{n,+}$ and
$\mathbb{S}^{n,-}$. The affine systems on $\mathbb{R}^{2}$ has two strong
chain control sets $E_{1}^{\ast}$ and $E_{2}^{\ast}$, hence also the induced
systems on $\mathbb{S}^{n,+}$ and $\mathbb{P}^{n,1}$ have two strong chain
control sets, while the system on $\mathbb{P}^{2}$ has a single (strong) chain
control set. See Figure \ref{fig2}.

\begin{example}
\label{Example2}Consider the affine control system%
\[
\left(
\begin{array}
[c]{c}%
\dot{x}\\
\dot{y}%
\end{array}
\right)  =\left(
\begin{array}
[c]{cc}%
0 & 1\\
-1-u(t) & -3
\end{array}
\right)  \left(
\begin{array}
[c]{c}%
x\\
y
\end{array}
\right)  +u(t)\left(
\begin{array}
[c]{c}%
0\\
1
\end{array}
\right)  +\left(
\begin{array}
[c]{c}%
0\\
d
\end{array}
\right)  ,\quad u(t)\in\Omega:=[-\rho,\rho],
\]
where we suppose $\rho\in\left(  1,\frac{5}{4}\right)  $ and $d<1$. The
equilibria are%
\begin{align*}
\mathcal{C}_{1}  &  =\left\{  \left.  \left(
\begin{array}
[c]{c}%
x\\
0
\end{array}
\right)  \right\vert x\in\left[  \frac{d-\rho}{1-\rho},\infty\right)
\right\}  =\left\{  \left.  \left(
\begin{array}
[c]{c}%
\frac{d+u}{1+u}\\
0
\end{array}
\right)  \right\vert u\in\lbrack-\rho,-1)\right\}  ,\\
\mathcal{C}_{2}  &  =\left\{  \left.  \left(
\begin{array}
[c]{c}%
x\\
0
\end{array}
\right)  \right\vert x\in\left(  -\infty,\frac{d+\rho}{1+\rho}\right]
\right\}  =\left\{  \left.  \left(
\begin{array}
[c]{c}%
\frac{d+u}{1+u}\\
0
\end{array}
\right)  \right\vert u\in(-1,\rho]\right\}  ,
\end{align*}
respectively. There is no equilibrium for $u=-1$. Note that $\frac{d-\rho
}{1-\rho}>\frac{d+\rho}{1+\rho}>0$, hence $\mathcal{C}_{1}\cap\mathcal{C}%
_{2}=\varnothing$. The eigenvalues of the linear part are $\lambda
_{1,2}(u)=-\frac{3}{2}\pm\sqrt{\frac{5}{4}-u},u\in\lbrack-\rho,\rho]$. For
$u=-1$ the eigenvalues are $\lambda_{1}(-1)=0,\lambda_{2}(-1)=-3$ and the
eigenspace of $\lambda_{1}(-1)$ is $\mathbb{R}\times\{0\}$. For $u^{k}\nearrow
u^{0}=-1$, the equilibria in $\mathcal{C}_{1}$ satisfy $(x_{u^{k}%
},0)\rightarrow(\infty,0)$ and for $u^{k}\searrow u^{0}=-1$, the equilibria in
$\mathcal{C}_{2}$ satisfy $(x_{u^{k}},0)\rightarrow(-\infty,0)$ for
$k\rightarrow\infty$. The equilibria in $\mathcal{C}_{1}$ are hyperbolic since
for $u\in\lbrack-\rho,-1)\subset(-\frac{5}{4},-1)$ one obtains $\lambda
_{1}(u)>0>\lambda_{2}(u)$. The equilibria in $\mathcal{C}_{2}$ are stable
nodes since for $u\in(-1,\rho]$ one obtains $0>\lambda_{1}(u)>\lambda_{2}(u)$.
For constant $u$ the phase portrait with $y:=x-\frac{u+d}{1+u}$ is the shifted
phase portrait of%
\[
\ddot{y}+3\dot{y}+(1+u)y=0.
\]
There are control sets $D_{1}$ and $D_{2}$ containing the equilibria in
$\mathcal{C}_{1}$ and $\mathcal{C}_{2}$, respectively, in the interior.

While the asymptotic stability of the equilibria in $\mathcal{C}_{2}$ implies
that one can steer the system from $D_{1}$ to $D_{2}$, the converse does not
hold which follows by inspection of the phase portraits for the controls in
$[-\rho,-1]$ and $\left[  -1,\rho\right]  $, hence $D_{1}\not =D_{2}$. One
also sees that the unbounded chain control sets $E_{1}$ and $E_{2}$ with
$D_{1}\subset E_{1}$ and $D_{2}\subset E_{2}$ are disjoint. Note that
according to Gayer \cite[Theorem 19 and Remark 20]{Gayer04} the system above
satisfies the so-called inner-pair condition, which in the compact
case guarantees that, generically, the closures of control sets are chain
control sets. The continua of equilibria $\mathcal{C}_{1}$ and $\mathcal{C}%
_{2}$ in $\mathrm{int}D_{1}$ and $\mathrm{int}D_{2}$ are contained in strong
chain control sets with $E_{1}^{\ast}\cap D_{1}\not =\varnothing$ and
$E_{2}^{\ast}\cap D_{2}\not =\varnothing$, respectively. It follows that
\[
D_{1}\subset E_{1}^{\ast}\subset E_{1}\text{ and }D_{2}\subset E_{2}^{\ast
}\subset E_{2}.
\]
Hence the images $h(E_{1}^{\ast})$ and $h(E_{2}^{\ast})$ are in different
strong chain control sets in $\mathbb{S}^{2,+}$. With $h_{-}:\mathbb{R}%
^{2}\rightarrow\mathbb{S}^{2,-}$ defined in Remark \ref{Remark_southern} one
similarly obtains that $h_{-}(E_{1}^{\ast})$ and $h_{-}(E_{2}^{\ast})$ are in
different strong chain control sets in $\mathbb{S}^{2,-}$.

On the other hand, one has $\lim_{x\rightarrow\pm\infty}h(x,0)=(\pm1,0,0)$,
and
\[
\mathbb{S}^{2,0}\ni(1,0,0)\in\overline{h(E_{1}^{\ast})}\cap\overline
{h_{-}(E_{2}^{\ast})}\text{ and }\mathbb{S}^{2,0}\ni(-1,0,0)\in\overline
{h(E_{2}^{\ast})}\cap\overline{h_{-}(E_{1}^{\ast})}.
\]
Thus the subsets of $\mathbb{S}^{n}$ given by
\[
\overline{h(E_{1}^{\ast})}\cup\overline{h_{-}(E_{2}^{\ast})}\text{ and
}\overline{h(E_{2}^{\ast})}\cup\overline{h_{-}(E_{1}^{\ast})}%
\]
are strongly chain controllable. Each of these two subsets intersects both
hemispheres $\mathbb{S}^{2,+}$ and $\mathbb{S}^{2,-}$. In projective space
$\mathbb{P}^{2,0}$, the points $\mathbb{P}(1,0,0)$ and $\mathbb{P}(-1,0,0)$
coincide. It follows for the images of the strong chain control sets
$E_{1}^{\ast}$ and $E_{2}^{\ast}$ that their closures in $\mathbb{P}%
^{2}=\mathbb{P}^{2,1}\cup\mathbb{P}^{2,0}$ intersect, hence they are contained
in a single strong chain control set of the compact space $\mathbb{P}^{2}$,
i.e., in the central chain control set $_{\mathbb{P}}E_{c}$. 
\end{example}


\begin{figure}[t]
\centering
\includegraphics[scale=0.6]{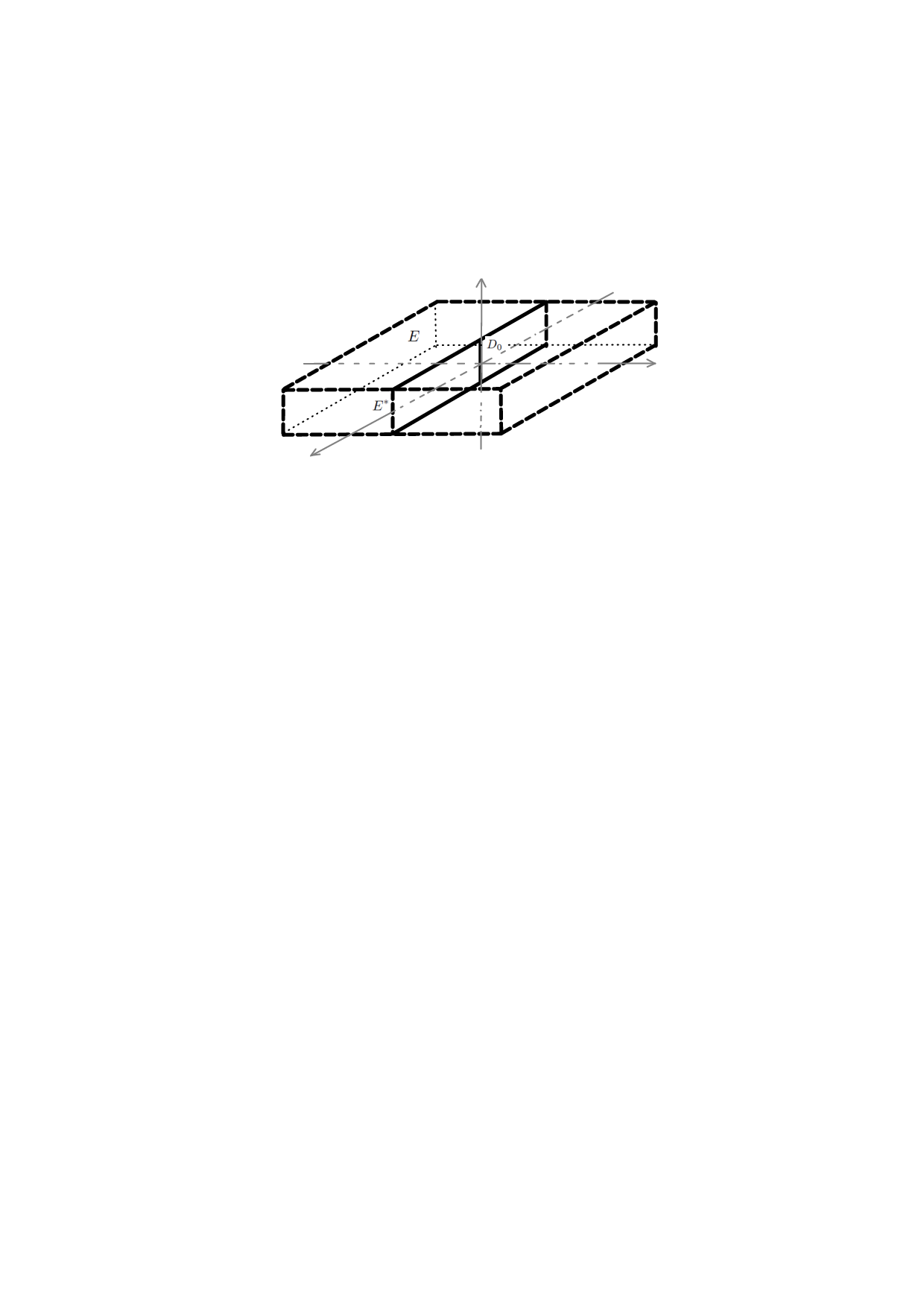}
\caption{Control set $D_{0}=\{(0,0)\}\times(-1,1)$,  strong chain control set $E^{\ast}=\mathbb{R}\times\{0\}\times\lbrack-1,1]$ and chain control set $E=\mathbb{R}^{2}\times\lbrack-1,1]$
	in Example \ref{Example_linear}.}\label{fig1}
\end{figure}

\begin{figure}[t]
\centering
\includegraphics[scale=0.6]{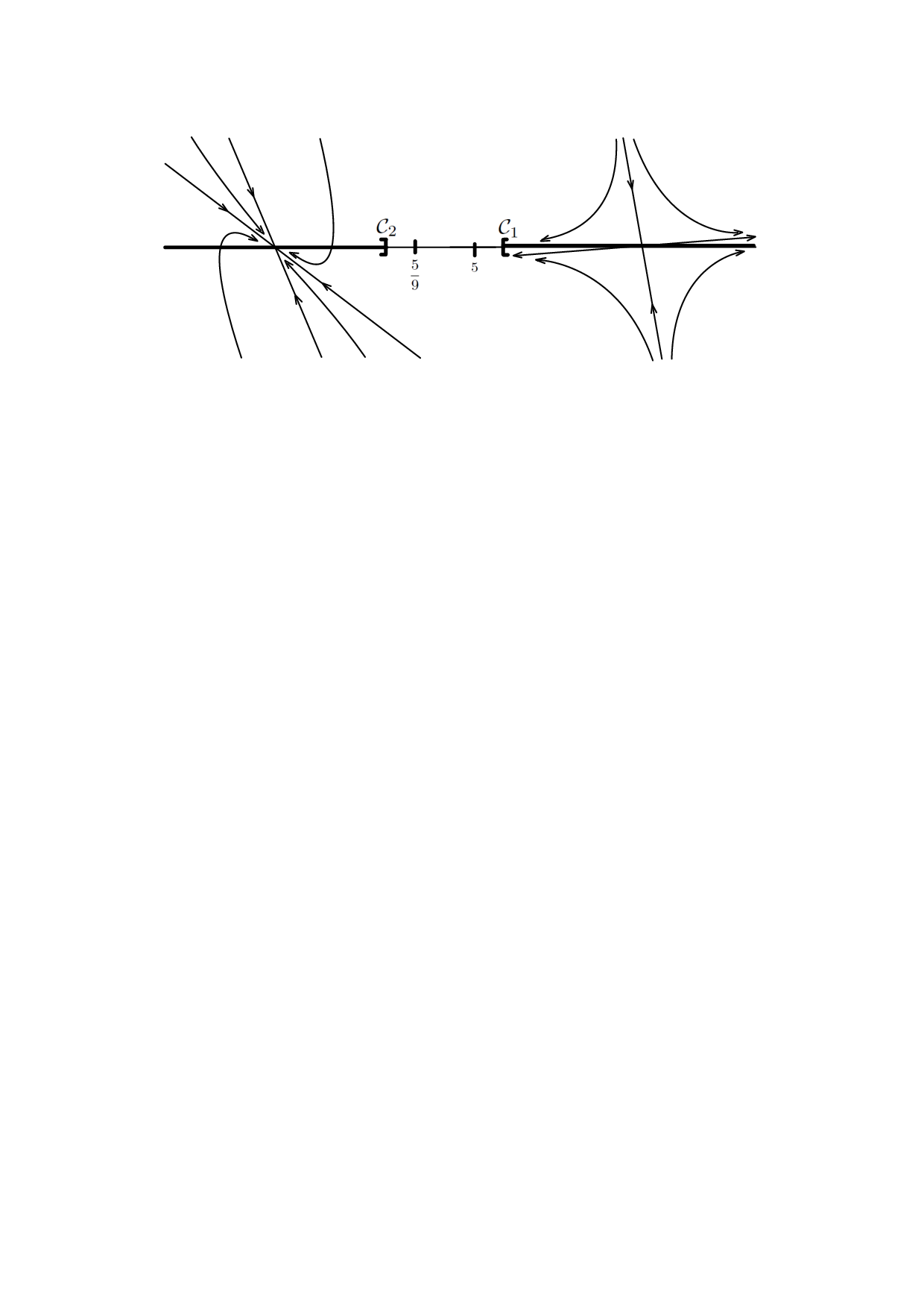}
\caption{Phase portraits in Example \ref{Example2} for constant $u$-values yielding equilibria in $\mathcal{C}_{1}$ and $\mathcal{C}_{2}$.}\label{fig2}
\end{figure}





\end{document}